\newcommand{\Graph}{{\mathsf G}}
    \DeclareMathOperator{\sech}{sech}
\newcommand{\norm}[1]{\left \| #1 \right \|}
\begin{document}

\title[Mean-field Potts model]{Universality of the mean-field for the Potts model}

\author[A.\ Basak]{Anirban Basak$^*$}
 \author[S.\ Mukherjee]{Sumit Mukherjee$^\dagger$}
 
 \address{$^*$Department of Mathematics, Duke University
\newline\indent Durham, North Carolina 27705}

\address{$^{\dagger}$Department of Statistics, Columbia University
\newline\indent Manhattan, New York 10027}

\date{\today}

\subjclass[2010]{60K35, 82B20, 82B44.}

\keywords{Ising measure, Potts model, log partition function, mean-field, large deviation.}


\begin{abstract}
We consider the Potts model with $q$ colors on a sequence of weighted graphs with adjacency matrices  $A_n$, allowing for both positive and negative weights. Under a mild regularity condition on $A_n$ we show that the mean-field prediction for the log partition function is asymptotically correct, whenever $\tr(A_n^2)=o(n)$. In particular, our results are applicable for the Ising and the Potts models on any sequence of graphs with average degree going to $+\infty$. Using this, we establish the universality of  the limiting log partition function of the ferromagnetic Potts model for a  sequence of asymptotically regular graphs, and that of the Ising model for bi-regular bipartite graphs in both ferromagnetic and anti-ferromagnetic domain. We also derive a large deviation principle for the empirical measure of the colors for the Potts model on asymptotically regular graphs.
	
\end{abstract}

\maketitle

\section{Introduction}
One of the fundamental models in statistical physics is the nearest neighbor {\em $q$-state  Potts model}. For a finite undirected graph $\Graph:=(V, E)$, with vertex set $V$, and edge set $E$, the Potts model is a probability measure on $[q]^{|V|}$ with $[q]:=\{1,2,\cdots,q\}$, where $|\cdot|$ denotes the cardinality of a set. The probability mass function for the Potts model at ${\bm y}:= \{y_i, i \in V\}$ is given by
\beq\label{eq:basic_potts}
\mu^{\be, B}({\bm y}):= \f{1}{Z_\Graph(\be, B)} \exp \Big\{ \be \sum_{(i,j) \in E} \delta(y_i,y_j) + B \sum_{i \in V} \delta(y_i,1)\Big\}.
\eeq
Here $\delta(y,y')= 1_{y=y'}$, and $Z_\Graph(\be, B)$ is the normalizing constant, which is commonly termed as the {\em partition function}. The parameters $\be$ and $B$ are known as {\em inverse temperature} parameter and {\em external magnetic field} parameters respectively, with $\be \ge 0$ is said to be the {\em ferromagnetic} regime, and $\be <0$ is the {\em anti-ferromagnetic} regime. When $q=2$, the measure $\mu^{\be, B}(\cdot)$ is the well known {\em Ising measure}. 

Although Ising and Potts models originated from statistical physics \cite{Ising,Potts}, due to its wide applications it has received a lot of recent interest from varied areas, including  statistics (cf.~\cite{structure_learning,BM,Chatterjee,highdim_ising} and references therein), computer science (cf.~\cite{Ben-Mon,bresler,Gar,Sly-Sun} and references therein), combinatorics,  finance,
social networks, computer vision, biology, and signal processing. Potts models on graphs also have connections with many graph properties, such as the number of proper colorings, max cut, min cut, min bisection (cf.~\cite{bgt,graph_limits_I,graph_limits_II,dmsen} and references therein), which are of interest in classical graph theory.
One of the main difficulties in the study of the Ising and the Potts model is the intractability of its partition function. If the partition function were available in closed form, one could analyze it to compute moments and limiting distributions, carry on inference in a statistical framework using maximum likelihood, or compute thermodynamic limits of these models which are of interest in statistical physics. As the partition function involves summing the unnormalized mass function over exponentially many terms, computing the partition function numerically or otherwise is challenging in general.  Since exact computations are infeasible, they are broadly two approaches to tackle this problem. A branch of research is directed towards devising efficient algorithms to approximate the log partition function (cf.~\cite{JS,MJW}, and the references therein). Whereas, probabilists are interested in studying the  asymptotics of the log partition function for sequence of graphs $\Graph_n$ for large $n$ (cf.~\cite{dms,dmss,en,ew} and references therein), in an attempt to  understand these measures. More precisely, considering a sequence of graphs $\Graph_n:=([n], E_n)$, with growing size, the goal is to compute the asymptotic {\em limiting log partition function} $\Phi(\be, B)$, where
\[
\Phi(\be, B) := \lim_{n \ra \infty}\frac{1}{n}\Phi_n(\be, B),
\] 
and $\Phi_n(\be, B):= \log Z_{\Graph_n} (\be,B)$. To get a non-trivial value of $\Phi(\be, B)$ one must scale $\be$ appropriately depending on $|E_n|$. In particular, the inverse temperature parameter in \eqref{eq:basic_potts} should be replaced by $\be_n:=({n}/{2|E_n|})\be$ for the Potts model on $\Graph_n$. This scaling ensures that $\Phi(\be,B)$ is not a constant function for all choices of $\be$, and $B$. By a slight abuse of notation we denote this measure by $\mu_n^{\be, B}(\cdot)$. 

One common scheme of approximating  $\Phi_n(\be,B)$ is via the {\em naive mean-field} method. Mean-field method has been in the statistical physics literature for a long time (see \cite{chandler, parisi}). Below we describe the mean-field method in our context in detail:

\subsection{Mean-field method}
Let $\cP([q]^n)$ denote the space of probability measures on $[q]^n$. For any two measures $\mu,\nu\in \cP([q]^n)$ define the Kullback-Leibler divergence between $\mu$ and $\nu$ by
$$D(\mu||\nu):=\sum_{{\bm y}\in [q]^n} \mu({\bm y})\log \mu({\bm y})-\sum_{{\bm y}\in [q]^n}\mu({\bm y})\log {\nu({\bm y})},$$ where $0\log 0=0$ and $\log 0=-\infty$ by convention.

Then,  for any $\gq\in\cP([q]^n)$ an easy computation gives
\begin{align*}
	D(\gq||\mu_n^{\be, B})=\Phi_n(\be, B)+\sum_{{\bm y}\in [q]^n}\gq({\bm y})\log \gq({\bm y})-\sum_{{\bm y}\in [q]^n}\gq({\bm y}){\bm H}_{n}^{\be, B}({\bm y}),
	\end{align*}
	where
	\[
	{\bm H}_{n}^{\be, B}({\bm y}):= \be_n \sum_{(i,j) \in E_n} \delta(y_i,y_j) + B \sum_{i \in [n]} \delta(y_i,1).
	\]
Since $D(\gq||\mu_n^{\be, B})\ge 0$, with equality iff $\gq=\mu_n^{\be, B}$, we get 

\beq\label{eq:var_formula_for_graph}
\Phi_n(\be, B)=\sup_{\gq\in \mathcal{P}([q]^n)} \left\{\sum_{{\bm y}\in [q]^n}\gq({\bm y}){\bm H}_{n}^{\be, B}({\bm y})-\sum_{{\bm y}\in [q]^n}\gq({\bm y})\log \gq({\bm y})\right\}.
\eeq	
 In literature \eqref{eq:var_formula_for_graph} is known as the {\em variational formula} for the log partition function $\Phi_n(\be, B)$. From \eqref{eq:var_formula_for_graph} one can obtain a lower bound on $\Phi_n(\be, B)$ by restricting the supremum in \eqref{eq:var_formula_for_graph} to product measures, i.e.  $\gq=\prod_{i\in [n]} \gq_i \in \mathcal{P}([q])^n$. Therefore
\begin{align}\label{eq:var_mean_field_prediction_for_graph}
	\Phi_n(\be, B)\
\ge	\sup_{\gq\in\cP([q])^n}{\mathsf M}_n^{\be, B}(\gq),
\end{align}
where 
\begin{align}\label{eq:def_mean_field_for_graph}
{\mathsf M}_n^{\be, B}(\gq):=\left\{\be_n {\sum_{(i,j)\in E_n}}\sum_{r\in [q]} \gq_i(r)\gq_j(r)+B\sum_{i\in[n]}\gq_i(1)-\sum_{i\in [n],r\in [q]}\gq_i(r)\log \gq_i(r)\right\}.
\end{align}
The RHS of \eqref{eq:var_mean_field_prediction_for_graph} is referred as the {\em mean-field approximation} for the log-partition function $\Phi_n(\be, B)$.

Since the supremum in \eqref{eq:var_mean_field_prediction_for_graph} is much more tractable than the one in \eqref{eq:var_formula_for_graph}, it is therefore naturally interesting to find graph sequences for which \eqref{eq:var_mean_field_prediction_for_graph} is asymptotically tight. For the complete graph it has been long known that the mean-field prediction is indeed tight for both Ising and Potts measure (see \cite{en,ew,ew_mle}). 
However, for {\em locally tree-like graphs} (see \cite[Definition 1.1]{dms}) this is not the case. Indeed, in \cite{dmaop} it is shown that the {\em Bethe prediction} is the correct answer for Ising measures on such graphs when the limiting tree is a Galton-Watson tree whose off-spring distribution have a finite variance. In \cite{DGH} it was extended for power law distribution, and finally in \cite{dms} it was extended to full generality. Moreover the same was shown be true for the Potts model on regular graphs in \cite{dms,dmss}.

For the complete graph on $n$ vertices one has $\Theta(n^2)$ edges, whereas locally tree-like graphs has only $O(n)$ edges (see Definition \ref{defn:order} for $O(\cdot)$, and $\Theta(\cdot)$). Therefore, it is natural to ask for graph sequences such that $n \ll |E_n| \ll n^2$, if one of the two predictions is correct for the limiting log partition function. Very few results are known about the asymptotics of the log partition function in this regime. See however \cite[Theorem 2.10]{sparse2} which in particular shows that if a sequence of graphs converges in $L^p$ cut metric, then corresponding log partition functions converge.  Also, it follows from \cite[Theorem 1]{CP} that the mean field approximation is correct for the limiting log-partition function of Potts models on a sequence of growing graphs in $\Z^d$, when $d$ goes to $\infty$ as well. We re-derive both these results to demonstrate flexibility of our approach (see Theorem \ref{thm:lp} and Example \ref{eg}(d) respectively).

In this paper, we consider Ising and Potts measures (we consider a slightly generalized version of standard Potts model, see Definition \ref{defn:potts_general}) on graphs with growing sizes such that $|E_n|/n \ra \infty$, as $n \ra \infty$, and show that the {asymptotic log partition function} can be expressed as a {\em variational problem} (see Theorem \ref{defn:potts_general}). Building on Theorem \ref{defn:potts_general}, and focusing on {\em asymptotically regular graphs}, we prove the universality of the limiting log partition function in the ferromagnetic domain, and confirm that it matches with the one obtained from the complete graph (see Theorem \ref{thm:mean_field}). We further derive asymptotic {log partition function} for bi-regular bipartite graphs (see Theorem \ref{thm:bipartite}). Recently, in \cite{sparse2} the asymptotic {log partition function} was derived for graph sequences converging in {\em cut metric}. As a byproduct of Theorem \ref{thm:main} we give an alternate proof of the same (see Section \ref{sec:cut metric}). For an outline of the proof techniques of the results we refer the reader to Section \ref{sec:proof technique}.

%

\subsection{Statement of main theorem}

We will work with the following slightly general version of the Potts model.
\begin{defn}\label{defn:potts_general}
For $q \ge 2$, let ${\bm J}$, ${\bm h}$ be a symmetric $q\times q$ matrix, and a vector of length $q$ respectively. Also let $A_n$ be a real symmetric $n \times n$ matrix. We define a hamiltonian ${\bm H}_n^{{\bm J}, {\bm h}}(\cdot)$ on $[q]^n$ by setting 
\beq\label{eq:hamiltonian}
{\bm H}_n^{{\bm J}, {\bm h}}({\bm y}):=\frac{1}{2}\sum_{i,j=1}^nA_n(i,j)\sum_{r,s=1}^ qJ_{rs} \delta(y_i,r)\delta(y_j,s)+\sum_{i=1}^n \sum_{r=1}^qh_r\delta(y_i,r),
\eeq
where ${\bm y}:=(y_1,\ldots,y_n)$. Using ${\bm H}_n^{{\bm J},{\bm h}}(.)$ we now define the following probability measure on $[q]^n$:
\beq\label{eq:potts_general}
\mu_n^{{\bm J}, {\bm h}}({\bm y}):= \f{1}{Z_n({\bm J},{\bm h})}\exp({\bm H}_n^{{\bm J}, {\bm h}}({\bm y})),
\eeq
where
\begin{align}
	Z_n({\bm J},{\bm h}):= \sum_{{\bm y}\in [q]^n} e^{{\bm H}_{n}^{{\bm J}, {\bm h}}({\bm y})}.  \notag
\end{align}
\end{defn}

\medskip

\noindent
Considering ${\bm J}$ to be the identity matrix $I_q$, ${\bm h}=B(1,0,0,\ldots,0)$, and $A_n$ to be the adjacency matrix of ${\sf G}_n$ divided by $2|E_n|/n$, we see that the probability measure $\mu_n^{{\bm J},{\bm h}}$  in \eqref{eq:potts_general} is a {generalized version} of the standard Potts measure $\mu_n^{\be, B}$. Throughout most of the article, we will fix a choice of $\{A_n\}_{n \in \N}$, ${\bm J}$, and ${\bm h}$. Therefore, to lighten the notation we will often write $\mu_n(\cdot)$ instead of $\mu_n^{{\bm J}, {\bm h}}(\cdot)$. 

\noindent
Now similarly as before we define the log partition function
\[
\Phi_n({\bm J}, {\bm h}):= \log Z_n({\bm J},{\bm h}).
\]
Arguing same as before we also obtain that 
\beq\label{eq:var_formula}
\Phi_n({\bm J},{\bm h})=\sup_{\gq\in \mathcal{P}([q]^n)} \Big\{\sum_{{\bm y}\in [q]^n}\gq({\bm y}){\bm H}_{n}^{{\bm J}, {\bm h}}({\bm y})-\sum_{{\bm y}\in [q]^n}\gq({\bm y})\log \gq({\bm y})\Big\},
\eeq	
and
\begin{align}\label{eq:var_mean_field_prediction}
	\Phi_n({\bm J},{\bm h})
\ge	\sup_{\gq\in\cP([q])^n}{\mathsf M}_n^{{\bm J},{\bm h}}(\gq),
\end{align}
where 
\begin{align}\label{eq:def_mean_field}
{\mathsf M}_n^{{\bm J},{\bm h}}(\gq):=\Big\{\frac{1}{2}\sum_{i,j=1}^nA_n(i,j)\sum_{r,s=1}^q \gq_i(r)\gq_j(s)J_{rs}+\sum_{i=1}^n\sum_{r=1}^q h_r\gq_i(r)-\sum_{i=1}^n \sum_{r=1}^q\gq_i(r)\log \gq_i(r)\Big\}.
\end{align}

\medskip

\noindent
In Theorem \ref{thm:main} below we show that under a fairly general condition 	\eqref{eq:var_mean_field_prediction} is actually tight as $n \ra \infty$. Before going to the statement of Theorem \ref{thm:main}, for convenience of writing, first let us introduce the following notation:
{\begin{defn}\label{defn:order}
	Let $a_n$ and $b_n$ be two non-negative sequences of real numbers. We write $a_n = o(b_n)$ if $\lim_{n\ra \infty} \frac{a_n}{b_n}=0$, whereas $a_n=O(b_n)$ implies $\limsup_{n\ra \infty} \frac{a_n}{b_n}<\infty$. Note that $a_n=O(b_n)$ includes the possibility of $a_n=o(b_n)$. Next we use the notation $a_n=\Theta(b_n)$, if $a_n=O(b_n)$ and $b_n=O(a_n)$.
\end{defn} }

\noindent
{Note that for both Ising and Potts model we must assume some conditions on $A_n$ to ensure that the resulting log partition is $O(n)$, or equivalently the limiting log partition function to be non-trivial. In this paper we work with the following condition:
\beq\label{eq:model_assumption2}
\sup_{{\bm x} \in [0,1]^n}\sum_{i\in [n]}\left|\sum_{j\in [n]} A_n(i,j) x_j\right| =O(n).
\eeq
Now let us denote $\norm{{\bm J}}_\infty:=\max_{r,s \in [q]}|J_{r,s}|$ and $\norm{{\bm h}}_\infty:=\max_{r \in [q]} |h_r|$. Since
\begin{align*}
|{\bm H}_n^{{\bm J},{\bm h}}({\bm y})|\le& \frac{\norm{\bm J}_\infty}{2}\sum_{i\in [n],r,s\in [q]}\Big|\sum_{j\in [n]}A_n(i,j)\delta(y_j,s)\Big|+ \norm{{\bm h}}_\infty \sum_{i \in [n]} \sum_{r \in [q]} \delta(y_i,r)\\
\le& \frac{\norm{\bm J}_\infty}{2}\sum_{r,s\in [q]}\sup_{{\bm x}\in [0,1]^n}\sum_{i\in [n]}\Big|\sum_{j\in [n]}A_n(i,j)x_j\Big| + n \norm{{\bm h}}_\infty,
\end{align*}
it follows by \eqref{eq:model_assumption2} that $|\sup_{{\bm y}\in [q]^n}H_n^{{\bm J},{\bm h}}({\bm y})|=O(n)$, which implies $\Phi_n({\bm J},{\bm h})=O(n)$ as well.


\noindent
When all entries of $A_n$ have the same sign, condition \eqref{eq:model_assumption2} is equivalent to $$\norm{A_n}_1:=\sum_{i,j\in [n]}|A_n(i,j)|=O(n).$$}
If \eqref{eq:model_assumption2} does not hold then there exists ${\bm J},{\bm h}$ such that the resulting log partition function $\Phi_n({\bm J},{\bm h})$ scales super linearly. For example, if all entries of $A_n$ are positive, ${\bm J}=\be {\bm I}_q$, then for any $\beta>0$ an application of the mean-field lower bound gives
$\lim_{n\rightarrow\infty}\frac{1}{n}\Phi_n(\beta,B)=+\infty$, thus proving that \eqref{eq:model_assumption2} is necessary for the log partition function to be $O(n)$ in general.
If $A_n$  has both positive and negative entries, \eqref{eq:model_assumption2} continues to hold for many well-known models with both positive and negative entries, such as the Sherrington-Kirkpatrick model and Hopfield model (see Section \ref{sub:examples}). 

\bigskip

\noindent
Of course we do not expect the mean-field approximation to hold for all matrices $A_n$ satisfying \eqref{eq:model_assumption2}. For example, it is known that the mean-field approximation is not correct for the {Sherrington-Kirkpatrick} model \cite{Tal}, or Ising models on sparse graphs \cite{dm}. With this in mind we introduce the following definition.

\begin{defn}\label{assu:mf}
Suppose $A_n$ is a sequence of symmetric $n\times n$ matrices safisying \eqref{eq:model_assumption2}. We say that $A_n$ satisfies the {\em mean-field} assumption if $\tr(A_n^2)=o(n)$.
\end{defn}

\medskip

\noindent
Now we are ready to state our first result.
{
\begin{thm}\label{thm:main}
 If  $A_n$ satisfies the mean-field assumption, then  
$$\lim\limits_{n\rightarrow\infty}\frac{1}{n}\left[\Phi_n({\bm J},{\bm h})-\sup_{\gq\in \mathcal{P}([q])^n}{\mathsf M}_n^{{\bm J}, {\bm h}}(\gq)\right]=0.$$
\end{thm}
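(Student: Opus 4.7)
\medskip

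\noindent\textbf{Proof plan.}
Since the inequality $\Phi_n({\bm J},{\bm h})\ge \sup_{\gq}{\mathsf M}_n^{{\bm J},{\bm h}}(\gq)$ is already recorded in \eqref{eq:var_mean_field_prediction}, the content of the theorem is the matching upper bound $\Phi_n({\bm J},{\bm h})\le \sup_{\gq}{\mathsf M}_n^{{\bm J},{\bm h}}(\gq)+o(n)$. My plan is to encode each configuration ${\bm y}\in[q]^n$ as the one-hot matrix $X({\bm y})\in\{0,1\}^{n\times q}$ with $X_{ir}=\delta(y_i,r)$, so that the quadratic part of ${\bm H}_n^{{\bm J},{\bm h}}$ reads $\tfrac{1}{2}\tr(X^{T} A_n X {\bm J})$, and then use $\tr(A_n^2)=o(n)$ to reduce this form to one of effective rank $o(n)$, on which a Hubbard--Stratonovich linearization applies.

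For the reduction, fix $\epsilon>0$ and decompose $A_n=A_n^{[\epsilon]}+R_n^{[\epsilon]}$ spectrally, with $A_n^{[\epsilon]}$ collecting the eigenprojections for $|\lambda|>\epsilon$ and $R_n^{[\epsilon]}$ the rest. A Markov-type bound on the eigenvalue distribution gives $\mathrm{rank}(A_n^{[\epsilon]})\le \tr(A_n^2)/\epsilon^2=o(n)$, while $\|R_n^{[\epsilon]}\|_{\mathrm{op}}\le\epsilon$. Since every one-hot matrix satisfies $\|X\|_F^2=n$, the quadratic contribution of $R_n^{[\epsilon]}$ to ${\bm H}_n^{{\bm J},{\bm h}}$ is uniformly bounded in absolute value by $\tfrac{q^2\norm{{\bm J}}_\infty}{2}\,\epsilon n$, an $O(\epsilon n)$ error that vanishes after sending $\epsilon\downarrow 0$ at the end. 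Consequently it suffices to prove the upper bound for the modified log partition function $\log\tilde Z_n$ in which $A_n$ is replaced by the low-rank $A_n^{[\epsilon]}$.

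Next, diagonalize $A_n^{[\epsilon]}=\sum_{k=1}^r\lambda_k v_k v_k^{T}$ with $r=o(n)$, and apply the Hubbard--Stratonovich identity to each rank-one term (real Gaussian transform for $\lambda_k>0$, shifted complex contour for $\lambda_k<0$). The quadratic form in $X$ becomes linear after conditioning on auxiliary parameters $\bm\xi\in\mathbb{R}^{O(r)}$, so the sum over ${\bm y}$ factorizes and $\tilde Z_n$ is expressed as an $O(r)$-dimensional Gaussian integral of a product $\prod_{i=1}^n \bar Z_i(\bm\xi)$ of single-site partition functions in tilted fields. Because the integrand scales like $e^{O(n)}$ while the integration dimension is $o(n)$, a Laplace estimate collapses the integral onto its saddle point, yielding
\begin{equation*}
\log\tilde Z_n=\sup_{\bm\xi}\Big\{-\tfrac{1}{2}\|\bm\xi\|^2+\sum_{i=1}^n \log \bar Z_i(\bm\xi)\Big\}+o(n).
\end{equation*}
A Legendre transform between entropy and log-moment-generating function identifies the right-hand saddle point, up to $o(n)$, with $\sup_{\gq}\tilde{\mathsf M}_n^{{\bm J},{\bm h}}(\gq)$, i.e.\ the mean-field functional with $A_n$ replaced by $A_n^{[\epsilon]}$; combined with the spectral truncation step this proves the theorem.

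The main obstacle is the Laplace step: because $r=o(n)$ grows with $n$, the usual finite-dimensional Laplace argument does not apply directly, and I would need uniform Hessian control together with sub-Gaussian tail estimates on $\bm\xi$ to ensure that volume and Jacobian corrections remain $o(n)$. A further subtlety is the negative-eigenvalue regime relevant for anti-ferromagnetic and mixed-sign $A_n$, in which the Hubbard--Stratonovich representation forces complex contours. A technically cleaner alternative to steps two and three is an $\epsilon$-net argument: cover the $o(n)$-dimensional image $\{A_n^{[\epsilon]}X:X\in[0,1]^{n\times q}\}$ by $\exp(o(n))$ Euclidean balls, observe that on each ball the Hamiltonian is essentially a linear functional of $X$, so the partition function restricted to that cell factorizes into single-site pieces, and then note that summing over the cells contributes only $o(n)$ to the log partition function, matching the mean-field bound.
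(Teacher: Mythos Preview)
Your spectral truncation $A_n=A_n^{[\epsilon]}+R_n^{[\epsilon]}$ with $\mathrm{rank}\,A_n^{[\epsilon]}=o(n)$ and $\|R_n^{[\epsilon]}\|_{\mathrm{op}}\le\epsilon$ is sound and is indeed the mechanism behind the paper's net construction in Lemma~\ref{lem:mean_field}. You correctly flag the obstacles in the Hubbard--Stratonovich route (Laplace principle in dimension $r_n\to\infty$, complex contours for negative eigenvalues); neither is resolved in your sketch, so that path remains a plan rather than a proof.

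The real gap is in your $\epsilon$-net alternative. After linearizing $\tfrac{1}{2}\sum_{r,s}J_{rs}X_r^{T}A_nX_s\approx \tfrac{1}{2}\sum_{r,s}J_{rs}X_r^{T}v_s$ on a cell centered at $v$ and dropping the cell constraint so that the sum over $X$ factorizes, you arrive at
\[
\log \tilde Z_n \;\le\; o(n)+\max_{v}\ \sum_{i\in[n]}\log\sum_{r\in[q]}\exp\Bigl(\tfrac{1}{2}\sum_{s}J_{rs}(v_s)_i+h_r\Bigr).
\]
The Legendre dual of the right side is $\sup_{\gq}\{\tfrac{1}{2}\sum_{r,s}J_{rs}\gq_r^{T}v_s+\sum_rh_r\mathbf 1^T\gq_r-I(\gq)\}$, and since $v_s\approx A_nX_s^{(0)}$ for some one-hot $X^{(0)}$ unrelated to the optimizing $\gq$, you obtain a \emph{bilinear} form $\gq_r^{T}A_nX^{(0)}_s$ in place of the quadratic $\gq_r^{T}A_n\gq_s$ defining ${\mathsf M}_n^{{\bm J},{\bm h}}$. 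These do not match: for $A_n(i,j)=1/n$, $J=\beta I_2$, $h=0$ your bound gives $n\max_{a\in[0,1]}\log(e^{\beta a/2}+e^{\beta(1-a)/2})=n\log(e^{\beta/2}+1)$, whereas $\sup_\gq{\mathsf M}_n^{{\bm J},{\bm h}}(\gq)=n(\tfrac{\beta}{4}+\log 2)$; at $\beta=1$ these are $\approx 0.974\,n$ versus $\approx 0.943\,n$, so your upper bound overshoots the mean-field value by a linear amount.

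The paper closes exactly this gap via the Chatterjee--Dembo device. Instead of linearizing ${\bm H}_n$ on cells of $X$, it introduces the conditional-mean vector $\hat X_{ir}=\P_{\mu_n}(Y_i=r\mid Y_k,k\neq i)\in\cP([q])^n$, proves by second-moment estimates under $\mu_n$ (Lemmas~\ref{lem:lemma1}--\ref{lem:lemma2}) that ${\bm H}_n^{{\bm J},{\bm h}}(X)\approx{\bm H}_n^{{\bm J},{\bm h}}(\hat X)$ and $g_n(X,\hat X)\approx I_n(\hat X)$ on a set of $\mu_n$-probability at least $1/2$, and only then discretizes $\hat X$ via the net of Lemma~\ref{lem:mean_field}. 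The algebraic identity $\sum_{X\in\cX_n}e^{g_n(X,z)}=1$ for $z\in\hat\cX_n$ absorbs the residual sum over $X$ exactly, leaving ${\bm H}_n^{{\bm J},{\bm h}}(\hat X)-I_n(\hat X)$, which is ${\mathsf M}_n^{{\bm J},{\bm h}}$ evaluated at $\gq=\hat X\in\cP([q])^n$. This sidesteps both the growing-dimension Laplace step and the bilinear--quadratic mismatch, and works uniformly over the signs of the eigenvalues of $A_n$.
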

}
Theorem \ref{thm:main} essentially says that if $A_n$ is a sequence of matrices which satisfies the mean-field assumption then the mean-field approximation gives the right answer for the log partition function upto an error which is $o(n)$.

As an application of Theorem \ref{thm:main}, one immediately obtains the following corollary. This corollary will be used in all of our applications involving graphs.

\begin{cor}\label{cor:main}
Suppose $\Graph_n$ is a sequence of simple graphs, and $A_n$ is the adjacency matrix of $\Graph_n:=([n], E_n)$ multiplied by $n/(2|E_n|)$, where $|E_n|$ is the number of edges. Then the conclusion of Theorem \ref{thm:main} holds if $n=o(|E_n|)$.
\end{cor}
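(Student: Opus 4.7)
The plan is simply to verify the two hypotheses of Theorem \ref{thm:main} for $A_n = \frac{n}{2|E_n|} B_n$, where $B_n$ is the $\{0,1\}$-valued adjacency matrix of the simple graph $\Graph_n$. Once both are checked, Theorem \ref{thm:main} applies directly.

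First I would verify \eqref{eq:model_assumption2}. Since $B_n$ has nonnegative entries and $x_j \in [0,1]$, one can drop the absolute values and write
\begin{align*}
\sup_{{\bm x}\in [0,1]^n} \sum_{i\in [n]} \Big|\sum_{j\in[n]} A_n(i,j)\,x_j\Big|
\;\le\; \sum_{i,j\in [n]} A_n(i,j)
\;=\; \frac{n}{2|E_n|}\sum_{i,j\in[n]} B_n(i,j)
\;=\; \frac{n}{2|E_n|}\cdot 2|E_n|
\;=\; n,
\end{align*}
which even gives the sharper bound $O(n)$ with constant $1$, so \eqref{eq:model_assumption2} holds regardless of how $|E_n|$ scales.

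Next I would check the mean-field assumption $\tr(A_n^2)=o(n)$. Because $\Graph_n$ is simple, $B_n(i,j)\in \{0,1\}$, hence $B_n(i,j)^2=B_n(i,j)$, and
\begin{equation*}
\tr(A_n^2) \;=\; \Big(\frac{n}{2|E_n|}\Big)^{\!2}\sum_{i,j\in [n]} B_n(i,j)^2
\;=\; \Big(\frac{n}{2|E_n|}\Big)^{\!2}\cdot 2|E_n|
\;=\; \frac{n^2}{2|E_n|}.
\end{equation*}
The hypothesis $n=o(|E_n|)$ is exactly equivalent to $n^2/(2|E_n|)=o(n)$, so the mean-field assumption in Definition~\ref{assu:mf} is satisfied.

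With both \eqref{eq:model_assumption2} and $\tr(A_n^2)=o(n)$ established, Theorem~\ref{thm:main} immediately yields the desired conclusion. There is essentially no obstacle here: the corollary is a direct specialization, and the only small observation needed is that for simple graphs $\tr(B_n^2)$ equals twice the number of edges, which converts the growth condition on $|E_n|$ into the mean-field condition on $A_n$.
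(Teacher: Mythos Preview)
Your proof is correct and follows essentially the same route as the paper: you verify \eqref{eq:model_assumption2} by bounding the supremum by $\sum_{i,j}A_n(i,j)=n$, and you check the mean-field condition by computing $\tr(A_n^2)=n^2/(2|E_n|)=o(n)$ from $n=o(|E_n|)$. The only cosmetic difference is that the paper writes the first step as an equality (the supremum is attained at ${\bm x}={\bm 1}$), whereas you write an inequality, which is equally sufficient.
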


\begin{proof}
Since $$\sup_{{\bm x}\in [0,1]}\sum_{i\in [n]}\Big|\sum_{j\in [n]}A_n(i,j)x_j\Big|=\sum_{i,j\in [n]}A_n(i,j)=n,$$
\eqref{eq:model_assumption2} holds. Also we have
$$\frac{1}{n}\sum_{i,j\in [n]}A_n(i,j)^2=\frac{n}{2|E_n|^2}|E_n|=O\left(\frac{n}{|E_n|}\right)=o(1),$$
and so $A_n$ satisfies the mean-field assumption. The conclusion then  follows by Theorem \ref{thm:main}.
\end{proof}
Below we consider few different choices of $A_n$, and verify for which of those the mean-field assumption is satisfied.

\noindent
\subsection{Examples}\label{sub:examples}

 This is broadly divided into two categories.

\medskip

\subsubsection{\bf Matrices $A_n$ which are scaled adjancency graphs}\label{eg}
\begin{enumerate}[(a)]
\item
Let $\Graph_n$ be any sequence of simple dense labeled graphs on $n$ vertices, i.e.~it has $\Theta(n^2)$ edges. Let $A_n$ be adjacency matrix of $\Graph_n$ scaled by $n$, i.e.  $A_n(i,j):=\frac{1}{n}1_{(i,j) \in E_n}$. Since this scaling is equivalent to the scaling proposed in Corollary \ref{cor:main}, 
it suffices to check that $n=o(|E_n|)$. But this is immediate as $|E_n|=\Theta(n^2)$. 
 
 \smallskip

\item 
Let $\Graph_n$ be a  $d_n$ regular graph, and $A_n(i,j):=\frac{1}{d_n}1_{(i,j) \in E_n}$.  In this case again the scaling is the same one as that of Corollary \ref{cor:main}, and so it suffices to check that $n=o(|E_n|).$ Since $2|E_n|=nd_n$, Corollary \ref{cor:main} holds iff $d_n\rightarrow\infty$.

\smallskip

\item
Let $\Graph_n$ be an Erd\H{o}s-R\'{e}nyi random graph with parameter $p_n$.  Setting $A_n(i,j):=\frac{1}{np_n}1_{(i,j) \in E_n}$ it again suffices to check by Corollary \ref{cor:main} that $n=o(|E_n|)$, in probability. Since $|E_n|$ has $\dBin\left({n \choose 2}, p_n\right)$ distribution,
$$\frac{2|E_n|}{n^2p_n}{\rightarrow}1, \text{ in probability},$$
as soon as $n^2p_n\rightarrow\infty$, 
the mean-field assumption holds in probability iff $np_n\rightarrow \infty$. In particular the mean-field condition does not hold if $p_n=\frac{\lambda}{n}$ for some $\lambda<\infty$.

\smallskip

\item
Let $\Graph^{(d)}_n$ be the $[-n^{1/d},n^{1/d}]^d$ box of the $d$-dimensional integer lattice $\Z_d$.
Physicists have long been interested in studying Ising and Potts models on lattices (see \cite{N1,wu}, and the references therein). For any finite $d$, setting $A^{(d)}_n(i,j):=\frac{1}{d}1\{(i,j)\in E_n\}$ we note that $\f{1}{n}\tr((A_n^{(d)})^2)= O(\frac{1}{d})$, and thus the sequence does not satisfy the mean-field assumption. So our results are not applicable on $\Z^d$ for finite $d$. However, if we allow $d$ to go to infinity (at any rate) along with $n$, then Corollary \ref{cor:main} is applicable. One can check that this also implies that if we let  $d \ra \infty$ after letting $n \ra \infty$, the same conclusion continues to hold. Behavior of limiting log-partition function for the Potts model on $\Z^d$ for large $d$ has been studied in \cite{BC,CP}. We recover their results as an application of Corollary \ref{cor:main}

\end{enumerate}

\smallskip

\subsubsection{\bf Matrices with both positive and negative entries}

A general sufficient condition for  \eqref{eq:model_assumption2} to hold is $\norm{A_n}:=\sup_{{\bm x}: \norm{\bm x}_2=1}\norm{A_n {\bm x}}_2=O(1)$. To see this note that an application of Cauchy-Schwarz inequality gives
\begin{align*}\sup_{{\bm x}\in [0,1]^n}\sum_{i\in [n]}\Big|\sum_{j\in [n]}A_n(i,j)x_j\Big|\le \sqrt{n}\sup_{{\bm x}\in [0,1]^n}\norm{A_n {\bm x}}_2\le \sqrt{n}\norm{A_n}\sup_{{\bm x}\in [0,1]^n}\norm{\bm x}_2=O(n).
\end{align*}
\begin{enumerate}[(a)]
\item
Let $A_n$ be a symmetric matrix with $0$ on the diagonal, and $A_n(i,j)=\frac{1}{\sqrt{n}}Z(i,j)$ with $$\{Z(i,j)\}_{1\le i<j\le \infty}\stackrel{i.i.d.}{\sim}N(0,1).$$ This is the celebrated {Sherrington-Kirkpatrick} model of statistical physics introduced in \cite{SK}. Since $\norm{A_n}=O(1)$, in probability, in this case (see \cite[Theorem 2.12]{Bai}), \eqref{eq:model_assumption2} holds. However $A_n$ does not satisfy the mean-field assumption, as
$$\frac{1}{n}\sum_{i,j\in [n]}A_n(i,j)^2=\frac{1}{n^2}\sum_{i,j\in [n]}Z(i,j)^2{\rightarrow 1}, \text{ in probability}.$$
This is expected, as the log partition function in this case is given by the Parisi formula, and not by the mean-field approximation.

\item
Let $\eta$ be an $n\times m$ matrix of i.i.d.~random variables with $\P(\eta_{ik}=\pm 1)=\frac{1}{2}$, and let $$A_n(i,j)=\frac{1}{n}\sum_{k\in [m]}\eta_{ik}\eta_{jk}.$$
This is the Hopfield model of neural networks, first introduced in \cite{Hop}. In this case also one has $\norm{A_n}=O(1)$, in probability, when $m =\Theta(n)$ (see \cite[Section 2.2.2]{Bai}), and therefore \eqref{eq:model_assumption2} holds. Proceeding to check the mean-field condition one has
$$\frac{1}{n}\E\sum_{i,j\in [n]}A_n(i,j)^2=\frac{1}{n^3}\sum_{i,j\in [n]k,l\in [m]}[\delta(i,j)+\delta(k,l)-\delta(i,j)\delta(k,l)]=\frac{nm^2+n^2m-mn}{n^3},$$
and so the mean-field condition does not hold for $m=\Theta(n)$.

\end{enumerate}

\bigskip

\subsection{Proof technique}\label{sec:proof technique}

Establishing the conclusion of  Theorem \ref{thm:main} for graphs whose adjacency matrix has a single dominant eigenvalue is much easier, since in that case the behavior of the log partition function is governed by that eigenvalue. This is indeed the case for Erd\H{o}s-R\'enyi random graphs on $n$ vertices with parameter $p_n$ such that $np_n\gg \log n$. For example, in this regime the largest eigenvalue equals $np_n(1+o(1))$ (see \cite[Section 1]{ks}), whereas the second largest eigenvalue is $o(np_n)$ (see \cite[Theorem 1.1]{FO}), providing a spectral gap. Similarly for random $d_n$-regular graphs on $n$ vertices, one also has a spectral gap, as long as $d_n \ge (\log n)^\gamma$ for some $\gamma$ positive (see \cite{CL,CLV}). More generally, any {\em expander graph} has a spectral gap, and therefore for such graphs one can show that the mean-field approximation is asymptotically tight.  However, there are many graphs which are not expanders, such as the $d$-dimensional hypercube $\{0,1\}^d$ with $d\rightarrow\infty$. In this case the number of vertices in the graph is $n=2^d$, and it is well known  that the set of eigenvalues are $\{d-2i,0\le i\le d\}$ with multiplicity of $d-2i$ being ${d\choose i}$. Thus the two largest eigenvalues are $d$ and $d-2$ whose ratio converges to $1$ as $d$ becomes large, and consequently there is no dominant eigenvalue.

Even though there is no spectral gap in the hypercube, it is still the case that the number of {\em big} eigenvalues is small. For example, the largest eigenvalue is $d$, and the proportion of eigenvalues that lie outside the interval $[-d\delta,d\delta]$, for any  $\delta>0$, equals
$$\frac{1}{n}\sum_{i=1}^n1\{|d-2i|>d\delta\}=\P\left(\Big|\frac{1}{d}\sum_{i\in [d]}B_i-\frac{1}{2}\Big|>\delta\right),$$
where $\{B_i\}_{i\in [d]}$ are i.i.d.~Bernoulli random variables with $\P(B_i=0)=\P(B_i=1)=.5$. By weak law of large {numbers} the RHS above is $o(1)$, as $d\ra \infty$, and so the proportion of eigenvalues which are comparable to the leading eigenvalue is $o(1)$. Our proof makes this precise proving Theorem \ref{thm:main}  which covers not just the hypercube, but any sequence of graphs $\Graph_n$ satisfying $n=o(|E_n|)$ (see Corollary \ref{cor:main}). In fact the main condition of Theorem \ref{thm:main}. i.e.~the condition $\tr(A_n^2)=o(n)$, can be rewritten as $$\frac{1}{n}\sum_{i=1}^n\lambda_i(A_n)^2=o(1),$$
which says that the (properly scaled) empirical eigenvalue distribution converges to $0$ in $L^2$. And of course, as already pointed out that the mean-field  approximation does not hold in general when $|E_n|=\Theta(n)$, thus demonstrating that the conditions of Theorem \ref{thm:main}, and Corollary \ref{cor:main} are tight.

The main tool in the proof of Theorem \ref{thm:main} is a modified version of \cite[Theorem 1.5]{chatterjee_dembo}. For readers not familiar with \cite{chatterjee_dembo}, we informally describe the theorem and the ideas behind the proof of \cite[Theorem 1.5]{chatterjee_dembo}. Before proceeding, we define the notion of a {\em net} of a set.
\begin{defn}\label{defn:net}
For any $S\subset \R^n$ and $\varepsilon>0$, a set $\widetilde{S}\subset \R^n$ is said to be a $\varepsilon$ net of $S$, if given $s\in S$ there exists (at least one) $\widetilde{s}\in \widetilde{S}$ such that $\|s-\widetilde{s}\|_2\le \varepsilon$.
\end{defn}

The theorem assumes that $f:[0,1]^n\mapsto \R$ is a smooth function such that the set $\{\nabla f({\bm u}):{\bm u}\in \{0,1\}^n\}$ has an {\em $\sqrt{n}\vep$} net $\cD_n(\vep)$ with $\log |\cD_n(\vep)|=o(n)$, and concludes that
$$\log \sum_{{\bm u}\in \{0,1\}^n} e^{ f({\bm u})}= \sup_{{\bm u}\in [0,1]^n}\{f({\bm u})-I_n({\bm u})\}+o(n),$$
where $I_n({\bm u}):=\sum_{i=1}^n u_i\log u_i+(1-u_i)\log(1-u_i)$ is the binary entropy function, and ${\bm u}:=(u_1,\ldots,u_n)$. 

For the proof, they introduce a measure $\nu_n(\cdot)$  on $\{0,1\}^n$ given by $\nu_n({\bm u}) \propto \exp(f({\bm u}))$ for  ${\bm u}:=(u_1,u_2,\ldots,u_n) \in \{0,1\}^n$. First it is argued that $f({\bm u})$ and $f(\wh{\bm u})$ are close on a set with {\em high probability} under $\nu_n(\cdot)$, say $\cA_n$ (see \cite[Lemma 3.1]{chatterjee_dembo}). Here $\hat{u}_i$ is conditional expectation of $u_i$, conditioned on everything else. Therefore $\sum_{{\bm u}\in \{0,1\}^n} \exp(f({\bm u}))$ can be well approximated by $\sum_{{\bm u} \in \cA_n} \exp(f(\wh{\bm u}))$. Turning to evaluate the latter summation, it is further noted that $g({\bm u}, \wh{\bm u})$, and $I_n(\wh{\bm u})$ are also close on $\cA_n$ (see  \cite[Lemma 3.2]{chatterjee_dembo}), where for ${\bm u} \in [0,1]^n$, and ${\bm w} \in (0,1)^n$,
\[
g({\bm u}, {\bm w}):= \sum_{i \in [n]} u_i \log w_i + (1-u_i) \log (1-w_i), \text{ and } I_n({\bm w}):= g({\bm w}, {\bm w}).
\]
Therefore one only needs to control $\sum_{{\bm u} \in \cA_n}  \exp(f(\wh{\bm u})+ g({\bm u}, \wh{\bm u}) - I_n(\wh{\bm u}))$. To control the above, the summation over $\cA_n$ is broken into smaller sets where each sum is over only those ${\bm u}$ for which $\wh{\bm u} \approx {\bm p}$, for some ${\bm p} \in [0,1]^n$. Next instead of summing over all choices of ${\bm p} \in [0,1]^n$, the sum is restricted on the {$\sqrt{n}\vep$-net of the image of the map ${\bm u}\mapsto \wh{\bm u}$, using the set $\cD_n(\vep)$. Thus one obtains
\begin{align}
\log \sum_{{\bm u} \in \cA_n}  \exp(f(\wh{\bm u})+ g({\bm u}, \wh{\bm u}) - I_n(\wh{\bm u})) \approx \log \sum_{{\bm p} \in \cD_n(\vep)} \sum_{{\bm u}: \wh{\bm u} \approx {\bm p}} \exp(f({\bm p})+ g({\bm u}, {\bm p}) - I_n({\bm p})). 
\end{align}
Finally noting that
\[\sum_{{\bm u} \in \{0,1\}^n} e^{g({\bm u}, {\bm p})}=1,
\]
the proof follows as the size of $\cD_n(\vep)$ is sub-exponential.

\medskip

In our proof we follow the same scheme. However, there are several challenges that we had to overcome to apply this idea in our set-up. First, we need to find a net $\cD_n(\vep)$ with appropriate properties. In our set-up, we need to find a $\sqrt{n}\varepsilon$-net $\cD_n(\varepsilon)$  of the set $\{A_n{\bm v}:{\bm v}\in \{0,1\}^n\}$. Since we have very limited assumptions on the structure of $A_n$, obtaining a $\sqrt{n}\vep$-net is not straightforward.  The main difficulty comes from the fact that the eigenvalues of  $A_n$ can be unbounded. To overcome this, we split the range of the eigenvalues into its {\em level sets}, and then we choose nets of varying size across each of the level sets (for more details see proof of Lemma \ref{lem:mean_field}). 

 Equipped with Lemma \ref{lem:mean_field}, a direct application of \cite[Theorem 1.5]{chatterjee_dembo} proves Theorem \ref{thm:main} for graphs $\Graph_n$ such that \begin{align}\label{eq:chat_dem}
	\limsup_{n\rightarrow\infty}n\sum_{i\in [n]}\left(\frac{d_i(\Graph_n)}{\sum_{j\in [n]}d_j(\Graph_n)}\right)^2<\infty,
	\end{align}
where $\{d_1(\Graph_n),\cdots,d_n(\Graph_n)\}$ are the degrees of $\Graph_n$. The hypercube does satisfy this condition, as does any regular graph. 
There are many graphs in literature such that 
 $n=o(|E_n|)$,  but \eqref{eq:chat_dem} does not hold. For example, let $\Graph_n$ denote the complete bipartitle graph $K_{a_n,n-a_n}$, where $a_n$ is a  sequence of natural numbers going to $\infty$ such that $a_n=o(n)$. In this case the LHS of \eqref{eq:chat_dem} equals $$\frac{n[a_n(n-a_n)^2+(n-a_n)a_n^2]}{4a_n^2(n-a_n)^2}=O\Big(\frac{n}{a_n}\Big),$$
which is not $O(1)$, as $a_n=o(n)$. Since $|E_n|=a_n(n-a_n)$ with $a_n\rightarrow\infty$, Corollary \ref{cor:main} is still applicable for $K_{a_n, n-a_n}$ but \cite[Theorem 1.5]{chatterjee_dembo} does not apply.

To remove the requirement of \eqref{eq:chat_dem} we modify the proofs of \cite[Lemma 3.1]{chatterjee_dembo}, and \cite[Lemma 3.2]{chatterjee_dembo}. In the proof of these two lemmas, at many places, supremum norm bound is used for several functions. The condition \eqref{eq:chat_dem} arises because of that. Instead, we carefully use the assumption \eqref{eq:model_assumption2}, and the fact that the hamiltonian in our set-up is a quadratic function.  This part of the proof has been inspired from \cite{Chatterjee}.  

\medskip

\noindent
In Section \ref{sec:appl} we provide several applications of Theorem \ref{thm:main}. One of which is the computation of the limit for asymptotically regular graphs. To be more precise, we call a sequence of graphs to be asymptotically regular if the empirical distribution of the row sums of the properly scaled adjacency matrix converges to $\delta_1$, and if its mean also converges to one. Using a truncation argument we derive the desired result. We also find the limit for bi-regular bipartite graphs, for which we carefully analyze the solutions of some fixed point equations. Lastly, we identify the limit for a sequence of simple graphs converging in cut metric. This follows from a straightforward analysis upon using Theorem \ref{thm:main}.

\subsection{Outline}
The outline of the rest of the paper is as follows. As applications of Theorem \ref{thm:main}, in Section \ref{sec:appl} we derive {the} asymptotics of the log partition function for ferromagnetic Potts models on asymptotically regular graphs, that of Ising models (both ferromagnetic and anti-ferromagnetic) on bi-regular bipartite graphs, and that of Potts model on a sequence of simple graphs converging in cut metric in the $L_p$ sense.  Section \ref{sec:Pot} carries out the proof of Theorem \ref{thm:main} using three auxiliary lemmas, whose proofs are deferred to Section \ref{sec:Pol}. Finally in Section \ref{sec:Poa} we prove the results appearing in Section \ref{sec:appl}.

\noindent
{\bf Acknowledgements.}	We thank Andrea Montanari for suggesting to look at the Ising measure on hypercube, Sourav Chatterjee for pointing out the reference \cite{chatterjee_dembo}, and Amir Dembo for helpful comments on earlier version of the manuscript. We also thank Sourav Chatterjee, Amir Dembo, and Andrea Montanari for many helpful discussions. We further thank Marek Biskup and Aernout Van Enter for pointing out the references \cite{BC} and \cite{CP} respectively. We are grateful to two anonymous referees for their detailed comments and suggestions which have improved the quality of this paper.

\section{Applications of theorem \ref{thm:main}}\label{sec:appl}
\noindent

\subsection{Asymptotically regular graphs}
In Theorem \ref{thm:main} we saw that the mean-field prediction is asymptotically correct when $A_n$ satisfies the mean-field condition. However, computing the supremum of ${\sf M}_n^{{\bm J}, {\bm h}}(\gq)$ may often be very hard for general matrices $A_n$. Restricting ourselves to the case ${\bm J}=\be I_q$ for $\beta>0$, in Theorem \ref{thm:mean_field} below we show that when the matrices $A_n$ are ``asymptotically regular'' one can write the $n$-dimensional supremum as a one-dimensional supremum, and thereby providing more tractable form of the limit. In particular, setting $h_r=B\delta(r,1)$, for asymptotcally regular graphs the limit is same as the one obtained for a Curie-Weiss Potts model.

\begin{thm}\label{thm:mean_field}
			\begin{enumerate}[(a)]
			\item
				Let $A_n$ satisfies the mean-field assumption, and each entry of $A_n$ is non-negative. Also let ${\bm J}=\beta I_q$, for some $\beta\ge 0$. Set $\cR_n(i):=\sum_{j=1}^nA_n(i,j)$. If 
		
				 \begin{align}\label{eq:upper_mean_field}
				 \lim_{n\rightarrow\infty}\frac{1}{n}\sum_{i =1}^n \delta_{\cR_n(i)}{\rightarrow}\delta_1, \text{ in distribution},
				 \end{align}
				 and
				 \begin{align}
				 \label{eq:lower_mean_field}
				 \lim_{n\rightarrow\infty}\frac{1}{n}\sum_{i=1}^n\cR_n(i)= 1,
				\end{align}
				then 
				\begin{align}\label{eq:strong_meanfield_conclusion}
				\lim_{n\rightarrow\infty}\frac{1}{n}\Phi_n({\bm J},{\bm h}) = \sup_{\gq\in \cP([q])}\Big[\frac{\beta}{2}\sum_{r=1}^q\gq(r)^2-\sum_{r=1}^q \gq(r)\log \gq(r)+\sum_{r=1}^q h_r\gq(r)\Big].
				\end{align}
			\item
			In particular, the conclusion of part (a) applies  in the following two cases:
			
			\begin{enumerate}[(i)]
			\item
			$\Graph_n$ is a sequence of $d_n$ regular graphs with $d_n\rightarrow\infty$, and $A_n=\frac{1}{d_n}1_{(i,j) \in E_n}$.
			
			\item
			$\Graph_n$ is an Erd\H{o}s-R\'{e}nyi random graph with parameter  $p_n$ such that $np_n\rightarrow\infty$, and $A_n=\frac{1}{np_n}1_{(i,j) \in E_n}$.
			\end{enumerate}
		\end{enumerate}
			\end{thm}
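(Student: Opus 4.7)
\smallskip\noindent\textbf{Proof plan for Theorem \ref{thm:mean_field}.}
By Theorem \ref{thm:main}, it suffices to show that
$$\frac{1}{n}\sup_{\gq \in \cP([q])^n}{\sf M}_n^{{\bm J},{\bm h}}(\gq) \longrightarrow \psi(1),$$
where for $t \ge 0$,
$$\psi(t) := \sup_{\gq \in \cP([q])}\Big[\frac{\beta t}{2}\sum_{r=1}^q \gq(r)^2 + \sum_{r=1}^q h_r \gq(r) - \sum_{r=1}^q \gq(r)\log \gq(r)\Big].$$
Note $\psi$ is continuous (in fact convex) on $[0,\infty)$ with $\psi(t) \le \frac{\beta t}{2} + \norm{{\bm h}}_\infty + \log q$, and $\psi(1)$ is exactly the right-hand side of \eqref{eq:strong_meanfield_conclusion}.

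For the lower bound, I would pick $\gq^* \in \cP([q])$ attaining $\psi(1)$ and substitute the constant vector $(\gq^*,\ldots,\gq^*)$ into ${\sf M}_n^{{\bm J},{\bm h}}$. Since $\sum_{i,j} A_n(i,j) = \sum_i \cR_n(i)$, hypothesis \eqref{eq:lower_mean_field} gives $\frac{1}{n}{\sf M}_n^{{\bm J},{\bm h}}(\gq^*,\ldots,\gq^*) \to \psi(1)$. For the matching upper bound the sign conditions $\beta \ge 0$ and $A_n(i,j) \ge 0$ enter crucially: the AM-GM inequality $\gq_i(r)\gq_j(r) \le \tfrac{1}{2}(\gq_i(r)^2 + \gq_j(r)^2)$ together with symmetry of $A_n$ yield
$$\frac{\beta}{2}\sum_{i,j} A_n(i,j)\sum_r \gq_i(r)\gq_j(r) \;\le\; \frac{\beta}{2}\sum_i \cR_n(i)\sum_r \gq_i(r)^2,$$
so that ${\sf M}_n^{{\bm J},{\bm h}}(\gq) \le \sum_{i=1}^n \psi(\cR_n(i))$ uniformly over $\gq \in \cP([q])^n$.

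To complete part (a), I would prove $\frac{1}{n}\sum_i \psi(\cR_n(i)) \to \psi(1)$. The empirical measures $\mu_n := \frac{1}{n}\sum_i \delta_{\cR_n(i)}$ are supported on $[0,\infty)$, converge weakly to $\delta_1$ by \eqref{eq:upper_mean_field}, and satisfy $\int t\,d\mu_n(t) \to 1 = \int t\,d\delta_1(t)$ by \eqref{eq:lower_mean_field}. A standard uniform integrability argument (equivalently, convergence in the Wasserstein $W_1$ metric) then forces $\int f\,d\mu_n \to f(1)$ for every continuous $f$ on $[0,\infty)$ of at most linear growth; $\psi$ qualifies. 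Combining both bounds with Theorem \ref{thm:main} delivers part (a). Part (b) is direct verification of the hypotheses: in (i) one has $\cR_n(i)\equiv 1$ and $\tfrac{1}{n}\tr(A_n^2)=1/d_n \to 0$; in (ii), $\cR_n(i) = d_i/(np_n)$ with $d_i \sim \dBin(n-1,p_n)$, and binomial concentration with $np_n\to\infty$ yields \eqref{eq:upper_mean_field} and \eqref{eq:lower_mean_field} in probability, while the mean-field condition follows as in Example \ref{eg}(c) (the $1+o_P(1)$ discrepancy between the normalizations $1/(np_n)$ and $n/(2|E_n|)$ is harmless).

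The main obstacle is the upper-bound decoupling. The step ${\sf M}_n^{{\bm J},{\bm h}}(\gq) \le \sum_i \psi(\cR_n(i))$ relies on both $\beta \ge 0$ and entrywise nonnegativity of $A_n$; without either, the quadratic form in $\gq$ cannot be bounded by a sum of site-wise terms, and a one-dimensional variational problem no longer captures the limit. This is precisely why the bipartite Ising analysis carried out later — in particular its anti-ferromagnetic regime — demands a separate treatment via fixed-point equations rather than this direct AM-GM reduction.
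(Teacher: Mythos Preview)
Your proposal is correct, and the overall architecture matches the paper's: invoke Theorem \ref{thm:main}, then prove matching upper and lower bounds on $\frac{1}{n}\sup_\gq {\sf M}_n^{{\bm J},{\bm h}}(\gq)$, with the lower bound coming from the constant product measure and \eqref{eq:lower_mean_field}. The upper-bound argument, however, is organized differently. The paper fixes $\delta>0$, truncates to the symmetric matrix $A_n^{(\delta)}(i,j):=A_n(i,j)\,1_{|\cR_n(i)-1|\le\delta}\,1_{|\cR_n(j)-1|\le\delta}$, applies the Gershgorin bound $\lambda_{\max}(A_n^{(\delta)})\le 1+\delta$ to control the quadratic form on the ``good'' rows, and bounds the contribution of rows with $|\cR_n(i)-1|>\delta$ by an explicit error $q\beta a_n^{(\delta)}$ which tends to $\delta$ by \eqref{eq:upper_mean_field}--\eqref{eq:lower_mean_field}; sending $\delta\to 0$ finishes. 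Your route skips the truncation: the AM-GM step (which is the same inequality underlying the Gershgorin bound) gives the site-wise decoupling ${\sf M}_n^{{\bm J},{\bm h}}(\gq)\le\sum_i\psi(\cR_n(i))$ directly, and you then absorb all tail control into a single measure-theoretic statement, namely that weak convergence $\mu_n\Rightarrow\delta_1$ on $[0,\infty)$ together with first-moment convergence forces $\int\psi\,d\mu_n\to\psi(1)$ for continuous $\psi$ of at most linear growth. This is more conceptual and avoids tracking two separate error terms, at the price of invoking (or reproving) the $W_1$/uniform-integrability fact; the paper's version is more elementary and self-contained. Part (b) is handled the same way in both.
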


\medskip

\noindent
{As an application of the above theorem, the following theorem derives the large deviation for the empirical measure $L_n$ on $\cP([q])$ defined by
		$$L_n(r):=\frac{1}{n}{\sum_{i\in [n]}}\delta(y_i,r).$$ 
		
		Below we recall a  few definitions of large deviation theory  which are necessary for our paper.

		\begin{defn} 
		Let $(\cX,\cB)$ be a measure space equipped with a topology such that every open set is in $\cB$.
A function $I:\cX\mapsto [0,\infty]$ is said to be a rate function if it is lower semi continuous, i.e. for every $\alpha<\infty$ the set $\{x\in\cX:I(x)\le \alpha\}$ is closed. The function $I$ is said to be a good rate function, if further the set  $\{x\in\cX:I(x)\le \alpha\}$ is compact as well. In particular if $\cX$ is compact, any rate function is a good rate function.

\smallskip

\noindent
A sequence of probability measures $\P_n$ on $(\cX,\cB)$ is said to satisfy a large deviation on $\cX$ with respect to a good rate function $I(\cdot)$, at speed $n$, if for every closed set $F$, and open set $U$, we have 
\begin{align*}
\limsup_{n\rightarrow\infty}\frac{1}{n}\log \P_n(F)\le -\inf_{x\in F}I(x),\\
\liminf_{n\rightarrow\infty}\frac{1}{n}\log \P_n(U)\ge -\inf_{x\in U}I(x).
\end{align*}
\end{defn}		
		
\medskip

\noindent		
The large deviation reduces the concentration of measure problem to an optimization problem involving the rate function. Next we introduce a few notations which will be needed while solving this optimization problem.
		\begin{defn}\label{lem:tanh0}
		For $\beta>0,B\ne 0$ let $m_{\beta,B}$ denote the unique solution of $m=\tanh(\beta m+B)$ with the same sign as that of $B$. For $\beta>1,B=0$ let $m_{\beta,0}$ denote the unique positive root of the equation $m=\tanh(\beta m)$. The assertions  about the roots of the equation $m=\tanh(\beta m+B)$ can be found in \cite[Section 1.1.3]{dm}.
%
%
%
%
		\end{defn}

		\begin{thm}\label{thm:ldp} (a) In the setting of Theorem \ref{thm:mean_field}, the sequence of empirical measures $L_n$ satisfies a large deviation principle on $\cP([q])$ with speed $n$ with respect to Euclidean topology, with the good rate function $\widetilde{I}_{\beta,{\bm h}}(\mu):=I_{\beta,{\bm h}}(\mu) -\min_{\mu\in \cP([q])}I_{\beta,{\bm h}}(\mu)$, where $$I_{\beta,{\bm h}}(\mu):=\sum_{r\in [q]}\Big(\mu_r\log \mu_r-\f{\beta\mu_r^2}{2}-h_r\mu_r\Big).$$ 
	Consequently letting $K_{\beta,{\bm h}}:=\arg\min_{\mu\in \cP([q])}I_{\beta,{\bm h}}(\mu)$, for any $\delta>0$  we have
\beq\label{eq:conc}
\limsup_{n\rightarrow\infty} \frac{1}{n}\log\mu_n(\min_{\mu\in K_{\beta,{\bm h}}}\norm{L_n-\mu}_\infty\ge \delta)<0.
\eeq
		
\noindent
(b) 
%
%
%
%
%
%
%
Suppose we are in the setting of Theorem \ref{thm:mean_field} with $q=2$ (which corresponds to Ising model).

\begin{enumerate}[(i)]
\item
If $h_1-h_2=0$ then

\begin{itemize}
\item
For $\beta\le  2$, for any $\delta>0$ there exists $\varepsilon=\varepsilon(\beta,\delta)$ such that for all large $n$ we have
$$\mu_n\left(\frac{1}{n}\sum_{i\in [n]}\Big\{\delta(y_i,1)-\delta(y_i,2)\Big\}\in [-\delta,\delta]\right)\ge 1-e^{-n\varepsilon}.$$

\item
For $\beta> 2$, for any $\delta>0$ there exists $\varepsilon=\varepsilon(\beta,\delta)$ such that for all large $n$ we have
\begin{align*}
&\mu_n\left(\frac{1}{n}\sum_{i\in [n]}\Big\{\delta(y_i,1)-\delta(y_i,2)\Big\}\in  [m_{\beta/2,0}-\delta,m_{\beta/2,0}+ \delta]\right)\ge \frac{1}{2}-e^{-n\varepsilon},\\
&\mu_n\left(\frac{1}{n}\sum_{i\in [n]}\Big\{\delta(y_i,1)-\delta(y_i,2)\Big\}\in [-m_{\beta/2,0}-\delta,-m_{\beta/2,0}+ \delta]\right)\ge \frac{1}{2}-e^{-n\varepsilon},
\end{align*}
where $m_{\beta,0}$ is as in Definition \ref{lem:tanh0}.
\end{itemize}

\item
If $h_1-h_2=B\ne 0$, for any $\delta>0$ there exists $\varepsilon=\varepsilon(\beta,B,\delta)$ such that for all large $n$ we have
$$\mu_n\left(\frac{1}{n}\sum_{i\in [n]}\Big\{\delta(y_i,1)-\delta(y_i,2)\Big\}\in [m_{\beta/2,B/2}-\delta,m_{\beta/2,B/2}+\delta] \right)\ge 1- e^{-n\varepsilon},$$
where $m_{\beta,B}$ is as in Definition \ref{lem:tanh0}.

\end{enumerate}

\end{thm}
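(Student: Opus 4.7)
Part (a). The plan is to deduce the LDP by combining Theorem \ref{thm:mean_field} (which gives $\frac{1}{n}\Phi_n \to -\min_\mu I_{\beta,\bm h}$) with a sharp local estimate of $\frac{1}{n}\log\mu_n(L_n = \mu)$ for each $n$-rational type $\mu \in \cP_n([q]) := \{\mu \in \cP([q]) : n\mu_r \in \mathbb{Z}\}$. Writing $\mu_n(L_n = \mu) = Z_n(\mu)/Z_n$ with $Z_n(\mu) := \sum_{\bm y \in \cN(\mu)} e^{H_n(\bm y)}$ and $\cN(\mu) := \{\bm y : L_n(\bm y) = \mu\}$, the external-field part is exactly $n\langle \bm h, \mu\rangle$ on $\cN(\mu)$ and $|\cN(\mu)| = \binom{n}{n\mu} = e^{-n\sum_r \mu_r\log\mu_r + O(\log n)}$ by Stirling, so the task reduces to showing
\[
\tfrac{1}{n}\log E_{\bm Y \sim \mathrm{Unif}(\cN(\mu))}\bigl[\exp\bigl(\tfrac{\beta}{2} Q_n(\bm Y)\bigr)\bigr] = \tfrac{\beta}{2}\sum_r \mu_r^2 + o(1),
\]
where $Q_n(\bm y) := \sum_{i,j} A_n(i,j)\delta(y_i, y_j)$. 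The mean $\frac{1}{n}E[Q_n] \to \sum_r \mu_r^2$ follows from the exchangeability identity $P(Y_i = Y_j = r) = \mu_r(n\mu_r - 1)/(n-1)$ for $i \ne j$, the regularity $\sum_{i,j} A_n(i,j) = n + o(n)$ from \eqref{eq:lower_mean_field}, and a bound on the diagonal contribution from the mean-field assumption. Exponential concentration of $Q_n$ around its mean is obtained from a Hanson--Wright-type inequality for quadratic forms under a random permutation, with variance proxy $\tr(A_n^2) = o(n)$; coupled with the trivial deterministic bound $Q_n(\bm y) \le \bm 1^\top A_n \bm 1 = n + o(n)$, a standard truncation into typical and atypical regions yields the claimed moment-generating-function asymptotics.

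This local estimate gives $\frac{1}{n}\log \mu_n(L_n = \mu) = -\widetilde I_{\beta, \bm h}(\mu) + o(1)$ uniformly in $\mu \in \cP_n([q])$. The LDP upper bound on a closed $F \subset \cP([q])$ then follows from $\mu_n(L_n \in F) \le |\cP_n([q]) \cap F| \cdot \max_\mu \mu_n(L_n = \mu)$ using $|\cP_n([q])| = O(n^q)$ and continuity of $\widetilde I_{\beta, \bm h}$; the lower bound on an open $U$ follows by choosing a rational type in $U$ and appealing to Stirling. Goodness of the rate function is automatic from compactness of $\cP([q])$, and \eqref{eq:conc} is immediate from the upper bound applied to the closed set $\{\mu : \min_{\nu \in K_{\beta, \bm h}}\|\mu - \nu\|_\infty \ge \delta\}$, on which $\widetilde I_{\beta, \bm h}$ has strictly positive infimum.

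Part (b). Specializing $q = 2$ and parametrizing $\mu_1 = (1+m)/2$, $\mu_2 = (1-m)/2$ so that $m_n := L_n(1) - L_n(2)$ corresponds to $m$, a direct calculation gives (up to an additive constant in $\mu$)
\[
I_{\beta, \bm h}(\mu) = \tfrac{1+m}{2}\log(1+m) + \tfrac{1-m}{2}\log(1-m) - \tfrac{\beta m^2}{4} - \tfrac{Bm}{2}, \quad B := h_1 - h_2,
\]
whose stationary-point equation $\tfrac12 \log\tfrac{1+m}{1-m} = (\beta m + B)/2$ reduces to $m = \tanh((\beta m + B)/2)$, matching Definition \ref{lem:tanh0} at parameters $(\beta/2, B/2)$. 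The second derivative $1/(1-m^2) - \beta/2$ classifies the critical points: for $B \ne 0$ the global minimum is $m_{\beta/2, B/2}$; for $B = 0, \beta \le 2$ the unique global minimum is $0$; for $B = 0, \beta > 2$ the two global minima are $\pm m_{\beta/2, 0}$. Claim (ii) and claim (i) for $\beta \le 2$ are immediate from \eqref{eq:conc}. For (i) with $\beta > 2$, the exact $\{1,2\}$-swap symmetry of the Ising Hamiltonian at $B = 0$ enforces $\mu_n(m_n > 0) = \mu_n(m_n < 0)$, and combining this with \eqref{eq:conc} applied to the complement of the union of the two $\delta$-balls around $\pm m_{\beta/2, 0}$ produces the claimed $\tfrac12 - e^{-n\varepsilon}$ lower bound for each mode.

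Main obstacle. The technical crux is the exponential concentration of the exchangeable quadratic form $Q_n$ under the uniform measure on $\cN(\mu)$. A plain Chebyshev bound gives only polynomial decay, which is too weak to dominate the worst-case atypical contribution of order $\exp(O(\beta n))$ from highly clustered configurations (for instance, on a disjoint-cliques adjacency matrix, which satisfies our hypotheses, some $\bm y \in \cN(\mu)$ achieve $Q_n \asymp n$, much larger than $n\sum_r \mu_r^2$). Obtaining the needed bound $P(|Q_n - EQ_n| > \delta n) \le \exp(-c\delta^2 n^2/\tr(A_n^2))$ (or an appropriately interpolated version) calls for a Hanson--Wright-type inequality for sampling without replacement, or a Maurey-style martingale bound on the symmetric group with squared increments summing to $O(\tr(A_n^2))$; the mean-field hypothesis $\tr(A_n^2) = o(n)$ is precisely what pushes this concentration past the linear-in-$n$ threshold required.
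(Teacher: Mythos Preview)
Your argument for part (b) matches the paper's: both parametrize by $m=\mu_1-\mu_2$, reduce to the Curie--Weiss optimization, identify the minimizers via the fixed-point equation $m=\tanh((\beta m+B)/2)$, and use the $\{1,2\}$-swap symmetry at $B=0$ to split mass evenly between the two modes.

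For part (a), however, the paper takes a much shorter and structurally different route that avoids the obstacle you flag. The key observation is the exponential-tilting identity
\[
\frac{1}{n}\log \E_{\mu_n}\exp\Big(n\sum_{r\in[q]}t_rL_n(r)\Big)=\frac{1}{n}\big[\Phi_n({\bm J},{\bm h}+{\bm t})-\Phi_n({\bm J},{\bm h})\big],
\]
so that the limiting logarithmic moment generating function $\Lambda({\bm t})$ is obtained by applying Theorem~\ref{thm:mean_field} twice, once at ${\bm h}+{\bm t}$ and once at ${\bm h}$. This yields $\Lambda({\bm t})=\sup_{\mu\in\cP([q])}\{\langle{\bm t},\mu\rangle-\widetilde I_{\beta,{\bm h}}(\mu)\}$, and Baldi's theorem (a G\"artner--Ellis variant, \cite[Theorem~4.5.20]{DZ}) then gives the full LDP: the upper bound from compactness of $\cP([q])$, the lower bound by checking that every $\mu\in\cP([q])$ is an exposed point of $\widetilde I_{\beta,{\bm h}}$.

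By contrast, your type-by-type approach requires the uniform local estimate $\frac{1}{n}\log\mu_n(L_n=\mu)=-\widetilde I_{\beta,{\bm h}}(\mu)+o(1)$, and as you correctly identify, the bottleneck is exponential concentration of the exchangeable quadratic form $Q_n$ under the uniform law on a fixed type class. A bounded-differences bound on the symmetric group gives only $\P(|Q_n-\E Q_n|>\delta n)\le e^{-c\delta^2 n}$, and the constant $c=c(\delta)$ is not large enough to kill the atypical contribution $e^{O(\beta n)}$ uniformly in $\beta$; a genuine Hanson--Wright inequality for sampling without replacement with the sharper rate $e^{-c\delta^2 n^2/\tr(A_n^2)}$ is not standard and would itself require a separate proof. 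The paper's MGF route sidesteps this entirely: all the hard analysis has already been absorbed into Theorem~\ref{thm:mean_field}, and no new concentration estimate is needed.
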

\begin{remark}
Theorem \ref{thm:ldp}(b) gives concentration results for $\frac{1}{n}\sum_{i\in [n]}\{\delta(y_i,1)-\delta(y_i,2)\}$, for the Ising model, i.e.~for the Potts model of \eqref{eq:basic_potts} for $q=2$. If the Ising model is formulated in such a way that the spins take values in $\{-1,1\}$, then one can easily see that the results of Theorem \ref{thm:ldp}(b) are equivalent to the exponential concentration of average spin configuration in that set-up. 
	This gives a complete picture for the ferromagnetic Ising model $\mu_n^{\beta {\bm I}_2,{\bm h}}$ for all choices of the vector ${\bm h}$, for asymptotically regular graphs. The optimization of $I_{\beta,{\bm h}}$ for general $q$ for some specific choices of ${\bm h}$ is well known in the literature (see \cite{BGRW,cet,em,ew,ew_mle}). Using these results similar concentration results can be derived for the Potts model on asymptotically regular graphs, for those choices of ${\bm h}$. We omit the details.

	\end{remark}

%

		\subsection{Ising model on bipartite graphs}	
		
		This section focuses on the Ising model ($q=2$) on bipartite graphs. 
		\begin{defn}
		Let $\Graph_{(a,b),(c,d)}$  denote a bi-regular bipartite graph on $a+b$ labeled vertices, such that the two partite sets have sizes $a$ and $b$, and the common degree of vertices in those two partite sets  are $c$ and $d$ respectively. Thus we must have  $ac=bd$, which equals the number of edges.

\vskip.1cm
\noindent
In particular $\Graph_{(a,b),(b,a)}$ denotes the complete bipartite graph with the two partite sets having sizes $a$ and $b$.
		
		\end{defn}
	
\medskip		
		
		\begin{defn}\label{lem:tanh}
		For any $p\in (0,1)$ and $\beta\in \R$ set $\eta_{\beta,p}(\gs):=\tanh(\beta (1-p)\tanh(\beta p \gs))$. By elementary calculus it follows that 
	
\noindent	
(a) For $\be^2 p(1-p) \le 1$ the equation $\gs=\eta_{\beta,p}(\gs)$ has the unique root $0$.

\noindent
(b) For $\be^2 p(1-p) >1$ the equation $\gs=\eta_{\beta,p}(\gs)$ has a unique positive root, denoted hereafter by $\gs_{\be,p}$. Thus the aforementioned equation has three roots, namely $0,\gs_{\beta,p}$, and $-\gs_{\beta,p}$. Applying implicit function theorem, we also note that the function $(\be, p)\mapsto \gs_{\beta,p}$ is a continuously differentiable in the open set $\{(\be,p):p(1-p)\be^2>1\}$.
			\end{defn}

%
		\begin{thm}\label{thm:bipartite}
		
		Let $\Graph_{(a_n,n-a_n),(c_n,d_n)}$ be a sequence of bipartite graphs on $n$ labeled vertices, such that
 \begin{align}\label{eq:alpha}
 \lim_{n\rightarrow\infty}\frac{a_n}{n}=p\in (0,1),
 \end{align} and  $c_n+d_n \ra \infty$, as $n \ra \infty$. Thus for $q=2$, ${ \bm J}=\beta {\bm I}_2$ for some $\beta\in\R$, ${\bm h}={\bm 0}$ in \eqref{eq:potts_general}, setting $A_n$ to be the adjacency matrix of $\Graph_{(a_n,n-a_n),(c_n,d_n)}$ scaled by $c_n+d_n$ we have

\noindent
(a) If $\beta^2 p(1-p) \le 1$, then 
		$$\lim_{n\rightarrow\infty}\Phi_n(\beta,0)=\frac{\beta p(1-p)}{2}+\log 2.$$
(b) If $\beta^2 p(1-p) > 1$, then	
		$$\lim_{n\rightarrow\infty}\Phi_n(\beta,0)=\frac{\beta p(1-p)}{2}+\frac{|\beta| p(1-p)}{2}\gs_{|\beta|,p}\gs_{|\beta|,1-p}+p H(\gs_{|\beta|,p})+(1-p) H(\gs_{|\beta|,1-p}), $$
where  $\gs_{\beta,p}(\cdot)$ is as in Definition \ref{lem:tanh}, and $H(\gs):=-\frac{1+\gs}{2}\log \frac{1+\gs}{2}-\frac{1-\gs}{2}\log \frac{1-\gs}{2}$ for $\gs \in [-1,1]$.

		\end{thm}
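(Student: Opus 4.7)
The plan is to invoke Theorem \ref{thm:main} and then reduce the resulting mean-field variational problem to an explicit two-dimensional optimization solvable by the fixed point equations of Definition \ref{lem:tanh}.

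First, I verify the mean-field assumption. The identity $a_nc_n=(n-a_n)d_n$ combined with $a_n/n\to p\in(0,1)$ and the hypothesis $c_n+d_n\to\infty$ forces $\min(c_n,d_n)\to\infty$. Hence, by AM--GM,
\[
\tr(A_n^2)=\frac{2a_nc_n}{(c_n+d_n)^2}\le\frac{a_n}{2d_n}=o(n),
\]
and Theorem \ref{thm:main} applies. Passing to spin variables $\gs_i:=2\gq_i(1)-1\in[-1,1]$, one has $\sum_r\gq_i(r)\gq_j(r)=(1+\gs_i\gs_j)/2$, so
\[
\tfrac{1}{n}{\sf M}_n(\gq)=\tfrac{\be|E_n|}{2n(c_n+d_n)}+\tfrac{\be}{4n}\langle\gs,A_n\gs\rangle+\tfrac{1}{n}\sum_iH(\gs_i).
\]
The first term tends to $\be p(1-p)/2$, producing the leading constant common to parts (a) and (b). For $\be<0$, the gauge flip $\gs_j\mapsto-\gs_j$ for $j$ in the second partite set preserves the entropy and changes the sign of the bipartite bilinear form, so it suffices to treat $\be>0$; this is the source of the $|\be|$ in the final answer.

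The heart of the argument is the asymptotic identity
\[
\sup_{\gs\in[-1,1]^n}\!\Big\{\tfrac{\be}{4n}\langle\gs,A_n\gs\rangle+\tfrac{1}{n}\sum_iH(\gs_i)\Big\}=\sup_{u,v\in[-1,1]}\!\Big\{\tfrac{\be p(1-p)}{2}uv+pH(u)+(1-p)H(v)\Big\}+o(1).
\]
The lower bound comes from restricting to configurations constant on each partite set. For the upper bound, write $\gs^{(k)}=\bar\gs^{(k)}{\bm 1}+\gs^{(k)}_\perp$ on each partite set. Since the biadjacency $B$ satisfies $B{\bm 1}=c_n{\bm 1}$ and $B^T{\bm 1}=d_n{\bm 1}$,
\[
(\gs^{(1)})^TB\gs^{(2)}=a_nc_n\,\bar\gs^{(1)}\bar\gs^{(2)}+(\gs^{(1)}_\perp)^TB\gs^{(2)}_\perp.
\]
The perpendicular residual is at most $\sigma_2(B)\|\gs^{(1)}_\perp\|\|\gs^{(2)}_\perp\|$, which is balanced, after truncating spins away from $\pm1$, against the strict-concavity entropy penalty $|V_k|H(\bar\gs^{(k)})-\sum_{i\in V_k}H(\gs_i^{(k)})\ge c\|\gs^{(k)}_\perp\|^2$.

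Finally, the first-order conditions for the 2D problem give the system $u=\tanh(\be(1-p)v)$, $v=\tanh(\be pu)$ (modulo the scaling convention), so $u$ satisfies $u=\eta_{\be,p}(u)$ in Definition \ref{lem:tanh}. In the subcritical regime $\be^2p(1-p)\le 1$, the only root is $u=v=0$, and the 2D supremum equals $\log 2$, yielding part (a). In the supercritical regime there are three roots; comparing the functional at them (using concavity in each variable separately and the symmetry $(u,v)\leftrightarrow(-u,-v)$) identifies $\pm(\gs_{|\be|,p},\gs_{|\be|,1-p})$ as the global maximizers, establishing part (b). The main obstacle is the upper bound above: in non-expander bi-regular bipartite graphs, $\sigma_2(B)$ can be comparable to the top singular value of $B$, so spectral bounds alone do not close, and one must carefully balance them against the quadratic entropy penalty (and in the supercritical regime, additionally verify that no non-constant local maximum exceeds the 2D value).
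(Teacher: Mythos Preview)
Your overall structure is right—apply Theorem \ref{thm:main}, pass to spin variables, reduce $\be<0$ to $\be>0$ by the bipartite gauge flip, and then solve a two-parameter problem via the fixed-point system of Definition \ref{lem:tanh}. The verification of the mean-field hypothesis and the lower bound by restriction to block-constant configurations are fine.

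The genuine gap is exactly where you flag it: the upper bound showing that the $n$-dimensional mean-field functional is asymptotically no larger than the two-dimensional one. Your spectral decomposition gives a residual of size at most
\[
\frac{|\be|\,\sigma_2(B)}{2n(c_n+d_n)}\,\|\gs^{(1)}_\perp\|\,\|\gs^{(2)}_\perp\|,
\]
while the entropy penalty is at least $\tfrac{1}{2n}(\|\gs^{(1)}_\perp\|^2+\|\gs^{(2)}_\perp\|^2)$ because $-H''\ge 1$ on $(-1,1)$. Balancing these requires $|\be|\,\sigma_2(B)<2(c_n+d_n)$. For non-expander bi-regular bipartite graphs one may have $\sigma_2(B)=\sigma_1(B)=\sqrt{c_nd_n}$, so the condition becomes $|\be|<2(c_n+d_n)/\sqrt{c_nd_n}\to 2/\sqrt{p(1-p)}$, which fails for large $|\be|$. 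No refinement of the entropy constant helps: the Taylor remainder uses $H''$ at an intermediate point, and this can be as small as $-1$. So the spectral/entropy route does not close in the full supercritical range.

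The paper bypasses this entirely. Rather than proving a two-dimensional reduction and then optimizing, it analyzes the first-order conditions of the \emph{full} $n$-dimensional problem. Since any global maximizer can be taken with nonnegative spins (replace $\gs_i$ by $|\gs_i|$; the bilinear term only increases and $H$ is even), the stationary equations read $\gs_i^{(1)}=\tanh\big(\be\sum_jA_n(i,j)\gs_j^{(2)}\big)$ and symmetrically. Now set $\gs_{i_0}^{(1)}=\max_i\gs_i^{(1)}$ and use monotonicity of $\tanh$ together with the \emph{exact} bi-regularity $\sum_jA_n(i,j)=c_n/(c_n+d_n)$, $\sum_iA_n(i,j)=d_n/(c_n+d_n)$ to obtain $\gs_{i_0}^{(1)}\le \eta_{\be,\,d_n/(c_n+d_n)}(\gs_{i_0}^{(1)})$, and the reverse inequality for the minimum. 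This pins every $\gs_i^{(1)}$ (and then every $\gs_j^{(2)}$) to the same fixed-point value—no spectral gap is needed. The subcritical case follows because $\eta$ is a contraction; the supercritical case because $\eta$ has a unique positive fixed point; the boundary case follows by convexity of $\be\mapsto\Phi_n(\be,0)$ and continuity of the limit.
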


		\subsection{Potts model on converging sequence of graphs in cut metric }\label{sec:cut metric}
		\vspace{.2cm}
		
		The theory of dense graph limits was developed by Borgs, Chayes, Lovasz, and coauthors \cite{graph_limits_I, graph_limits_II, lovasz_book}, and has received phenomenal attention over the last few years. 
Recent works of Borgs et al~\cite{sparse1,sparse2} have extended  this theory beyond the regime of dense graphs. 
		One of the results in \cite{sparse2} is the asymptotics of the log partition function $\Phi_n({\bm J},{\bm h})$ of \eqref{eq:potts_general} of a  sequence of graphs converging in the sense of cut metric to functions $W$ that are unbounded. As a byproduct of Theorem \ref{thm:main} we are able to provide a short proof of their result. Before going to the statement of the result, we first need to introduce necessary notations, and concepts. These are taken from \cite{sparse1,sparse2}.
		
		\begin{defn}
			A function $W:[0,1]^2 \mapsto \R$ is called a symmetric function if $W(x,y) = W(y,x)$ for all $x,y \in [0,1]$. Any symmetric measurable function $W:[0,1]^2\mapsto \R$ which is $L^1$ integrable, i.e.~$\norm{W}_1:=\int_{[0,1]^2}|W(x,y)|dxdy<\infty$ is called a graphon.
			
\noindent			
Given a symmetric $n\times n$ matrix $A_n$, define a graphon on $[0,1]^2$ by dividing $[0,1]^2$ into $n^2$ smaller squares each of length $1/n$, and setting
			$W_{A_n}(x,y):=A_n(i,j)$ if $(x,y)$ is in the $(i,j)$-th box, i.e. $\lceil nx\rceil =i, \lceil ny\rceil =j$.

\noindent
The cut norm of a graphon $W$ is given by
			 $$\norm{W}_\square=\Big|\sup_{S,T\subset [0,1]}\int_{S\times T}W(x,y)dxdy\Big|.$$
			  After identifying graphons with cut distance zero, the set of equivalences classes of graphons equipped with the cut metric is a compact metric space. The cut norm is equivalent to the  $L^\infty\mapsto L^1$ operator norm defined by
			 
			 $$\norm{W}_{\infty\mapsto 1}:=\sup_{f,g: \norm{f}_\infty, \norm{g}_\infty \le 1}\Big|\int_{[0,1]^2} W(x,y)f(x)g(x)dxdy\Big|.$$
More precisely, we have
		$\norm{W}_\square\le \norm{W}_{\infty\mapsto 1}\le 4\norm{W}_\square.$
		
		\end{defn}
		
\medskip		
Next we introduce the notion of fractional partition.		
		\begin{defn}
			A $q$ tuple of measurable functions ${\bm \rho}:=(\rho_1,\cdots,\rho_q): [0,1]^q\mapsto [0,1]^q$, such that
			$$\sum_{r\in [q]} \rho_r(x)=1,\forall x\in [0,1],$$ will be called a fractional partition of $[0,1]$ into $q$ classes. The set of fractional partitions of $[0,1]$ into $q$ classes will be denoted by ${\bf FP}_q$.
			\end{defn}
			
\medskip
Now we are ready to state the result about the limiting log partition function for {\em a sequence of graphs converging in cut metric}.				
		\begin{thm}\label{thm:lp}
		
		Let $\Graph_n$ be a  sequence of simple graphs, and let $A_n$ be the adjacency matrix of $\Graph_n$ scaled by $\frac{2|E_n|}{n}$. If $W_{nA_n}$ converges in cut metric to a graphon $W$, then we have
			\begin{align*}\lim_{n\rightarrow\infty}\frac{1}{n}\Phi_n({\bm J},{\bm h})=&\sup_{{\bm \rho}\in {{\bf FP}_q}}F^{{\bm J},{\bm h}}(W,{\bm \rho}),
				\end{align*}
				where
				\begin{align*}
			F^{{\bm J},{\bm h}}(W,{\bm \rho})& :=\frac{1}{2}\sum_{r,s\in [q]} J_{rs}\int_{[0,1]^2}\rho_r(x)\rho_s(y)W(x,y)dxdy\\
			& \qquad \qquad \qquad \qquad \qquad +\sum_{r\in [q]} h_r\int_{[0,1]}\rho_r(x)dx-\int_{[0,1]}\sum_{r\in [q]}\rho_r(x)\log \rho_r(x)dx.
			\end{align*}
			
			\end{thm}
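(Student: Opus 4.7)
The plan is to combine Corollary \ref{cor:main} with a graphon-level rewriting of the mean-field variational principle, then pass to the cut-metric limit using the duality between the cut norm and the $L^\infty\to L^1$ operator norm. First, under the specified scaling, $\tr(A_n^2) = n^2/(2|E_n|)$, so the mean-field assumption of Theorem \ref{thm:main} is equivalent to $n = o(|E_n|)$. Observing that $W_{nA_n}\ge 0$ with $\int_{[0,1]^2}W_{nA_n}\,dx\,dy = 1$, cut-metric convergence forces $\int W = 1$ and places us in the regime where Corollary \ref{cor:main} applies, yielding
$$\frac{1}{n}\Phi_n({\bm J},{\bm h}) = \frac{1}{n}\sup_{\gq\in \cP([q])^n}{\sf M}_n^{{\bm J},{\bm h}}(\gq) + o(1).$$

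Next I would identify the finite-dimensional mean-field supremum with a supremum over step fractional partitions. For $\gq = (\gq_1,\dots,\gq_n)\in \cP([q])^n$, define ${\bm \rho}^\gq\in {\bf FP}_q$ by $\rho^\gq_r(x) := \gq_{\lceil nx\rceil}(r)$. Splitting each integral over the boxes on which $W_{nA_n}$ is constant, a direct calculation gives
$$\frac{1}{n}{\sf M}_n^{{\bm J},{\bm h}}(\gq) = F^{{\bm J},{\bm h}}(W_{nA_n},{\bm \rho}^\gq).$$
The problem thus reduces to showing $\sup_\gq F^{{\bm J},{\bm h}}(W_{nA_n},{\bm \rho}^\gq)\to \sup_{{\bm \rho}\in {\bf FP}_q}F^{{\bm J},{\bm h}}(W,{\bm \rho})$.

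The upper bound is uniform: using $\|U\|_\square\le \|U\|_{\infty\to 1}\le 4\|U\|_\square$ together with $\|\rho_r\|_\infty\le 1$ for ${\bm \rho}\in {\bf FP}_q$, one has $\left|\int \rho_r(x)\rho_s(y)(W_{nA_n}-W)(x,y)\,dx\,dy\right|\le 4\|W_{nA_n}-W\|_\square\to 0$ uniformly in ${\bm \rho}$. Hence $F^{{\bm J},{\bm h}}(W_{nA_n},{\bm \rho}) = F^{{\bm J},{\bm h}}(W,{\bm \rho}) + o(1)$ uniformly on ${\bf FP}_q$, and taking $\sup_\gq$ on the left and $\sup_{\bm \rho}$ on the right yields the upper bound. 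For the matching lower bound, given ${\bm \rho}\in {\bf FP}_q$ set $\gq_i^{(n)}(r) := n\int_{(i-1)/n}^{i/n}\rho_r(x)\,dx$ and write $\rho_r^{(n)} := \rho^{\gq^{(n)}}_r$, the conditional expectation of $\rho_r$ on the $n$-th dyadic partition. The Lebesgue differentiation theorem gives $\rho_r^{(n)}\to \rho_r$ almost everywhere, so dominated convergence handles the magnetic-field and entropy terms (all integrands are bounded on $[0,1]$); for the quadratic piece, splitting $\int \rho_r^{(n)}\rho_s^{(n)}W_{nA_n} = \int \rho_r^{(n)}\rho_s^{(n)}W + \int \rho_r^{(n)}\rho_s^{(n)}(W_{nA_n}-W)$, the first integral tends to $\int \rho_r\rho_s W$ by dominated convergence (with dominating function $|W|\in L^1$) and the second is $O(\|W_{nA_n}-W\|_\square)\to 0$ by the same duality.

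The main obstacle is controlling the quadratic interaction under the rather weak cut-metric convergence, which is genuinely weaker than $L^1$ convergence of the graphons. The saving feature is the duality $\|\cdot\|_\square \asymp \|\cdot\|_{\infty\to 1}$ together with the uniform $\ell_\infty$ bound built into the definition of a fractional partition: since we only ever pair $W_{nA_n}-W$ against test functions bounded by $1$, the cut norm is precisely the right notion of closeness, and no $L^p$ control of the graphon difference is needed. Once this is noted, the remainder of the argument is a routine chain of dominated-convergence manipulations.
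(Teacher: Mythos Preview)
Your argument is essentially the paper's proof: reduce to the mean-field supremum via Corollary~\ref{cor:main}, rewrite $\frac{1}{n}{\sf M}_n^{{\bm J},{\bm h}}(\gq)$ as $F^{{\bm J},{\bm h}}(W_{nA_n},{\bm\rho}^\gq)$, use the equivalence $\|\cdot\|_\square\asymp\|\cdot\|_{\infty\to 1}$ for the uniform upper bound, and approximate an arbitrary ${\bm\rho}\in{\bf FP}_q$ by step functions (with a.e.\ convergence plus dominated convergence) for the lower bound. The one point you do not actually justify is why cut-metric convergence forces $n=o(|E_n|)$, which is what Corollary~\ref{cor:main} requires; knowing $\int W=1$ does not by itself yield this, and the paper closes this gap by invoking \cite[Propositions~C5 and~C15]{sparse1}.
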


\medskip
Theorem \ref{thm:lp} follows from \cite[Theorem 2.10]{sparse2}, and \cite[Lemma 3.2]{sparse2}. In section \ref{sec:Poa} we give a shorter proof of the same using Corollary \ref{cor:main}.

\section{Proof of theorem \ref{thm:main}}\label{sec:Pot}

We begin with a simple lemma which allows us to assume that the entries of $A_n$ are $o(1)$.

\begin{lem}
Let $A_n$ be a sequence of matrices that satisfies the mean-field assumption. Then there is a sequence of matrices $\widetilde{A}_n$ with $0$ diagonal entries which also satisfies the mean-field assumption such that $\max_{i,j\in [n]}|\widetilde{A}_n(i,j)|=o(1)$, and
\begin{align*}
|\Phi_n({\bm J},{\bm h})-\widetilde{\Phi}_n({\bm J},{\bm h})|=o(n),\quad \sup_{\gq\in \cP([q])^n}|{\sf M}_n^{{\bm J}, {\bm h}}(\gq)-\widetilde{{\sf M}}_n^{{\bm J}, {\bm h}}(\gq)|=o(n),
\end{align*}
where $\widetilde{\Phi}_n({\bm J},{\bm h})$ and $\widetilde{{\sf M}}_n^{{\bm J}, {\bm h}}(\gq)$ {are obtained by replacing $A_n$ with $\widetilde{A}_n$ in the corresponding definitions}.
\end{lem}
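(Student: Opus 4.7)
The plan is to construct $\widetilde{A}_n$ in two stages: first zero out the diagonal of $A_n$, and then truncate the off-diagonal entries whose absolute value exceeds some slowly decaying threshold $\varepsilon_n \to 0$. The mean-field assumption $\tr(A_n^2) = \sum_{i,j} A_n(i,j)^2 = o(n)$ will do all the heavy lifting via Cauchy--Schwarz.

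First I would bound the diagonal contribution. By Cauchy--Schwarz,
$$\sum_{i=1}^n |A_n(i,i)| \;\le\; \sqrt{n}\,\sqrt{\textstyle\sum_{i=1}^n A_n(i,i)^2} \;\le\; \sqrt{n \cdot \tr(A_n^2)} = o(n).$$
Since the difference between the Hamiltonian of $A_n$ and that of its off-diagonal part is bounded uniformly in $\bm y$ by $\tfrac{1}{2}\|{\bm J}\|_\infty \sum_i |A_n(i,i)| = o(n)$, both $\Phi_n$ and $\sup_\gq {\sf M}_n^{{\bm J},{\bm h}}(\gq)$ change by at most $o(n)$ when the diagonal is removed. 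The analogous bound for the mean-field functional is identical.

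Next I would truncate. Let $\delta_n := \tr(A_n^2)/n \to 0$, and choose $\varepsilon_n := \delta_n^{1/4} \to 0$. Define $\widetilde{A}_n(i,j) := A_n(i,j)\mathbf{1}_{|A_n(i,j)| \le \varepsilon_n}$ for $i \ne j$ and $\widetilde{A}_n(i,i) := 0$. Then by construction $\max_{i,j}|\widetilde{A}_n(i,j)| \le \varepsilon_n = o(1)$. Let $N_n := \#\{(i,j): |A_n(i,j)| > \varepsilon_n\}$. Chebyshev gives $N_n \le \tr(A_n^2)/\varepsilon_n^2 = n\delta_n/\varepsilon_n^2$, and applying Cauchy--Schwarz once more yields
$$\sum_{i \ne j} |A_n(i,j) - \widetilde{A}_n(i,j)| \;\le\; \sqrt{N_n}\,\sqrt{\tr(A_n^2)} \;\le\; \frac{n\delta_n}{\varepsilon_n} \;=\; n\delta_n^{3/4} \;=\; o(n).$$
Combined with the diagonal estimate, this bounds the $L^1$ difference between $A_n$ and $\widetilde{A}_n$ by $o(n)$, so the quadratic form in the Hamiltonian (and hence the corresponding term in ${\sf M}_n^{{\bm J},{\bm h}}$) changes by at most $\tfrac{1}{2}\|{\bm J}\|_\infty \cdot o(n)$ uniformly in $\bm y$ and in $\gq$.

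Finally I would verify that $\widetilde{A}_n$ still satisfies both the regularity condition \eqref{eq:model_assumption2} and the mean-field assumption. The mean-field assumption is immediate since $\tr(\widetilde{A}_n^2) \le \tr(A_n^2) = o(n)$. For \eqref{eq:model_assumption2}, write $\widetilde{A}_n = A_n - R_n$ where $R_n := A_n - \widetilde{A}_n$, and observe that
$$\sup_{{\bm x} \in [0,1]^n}\sum_i \Big|\sum_j \widetilde{A}_n(i,j)x_j\Big| \;\le\; \sup_{{\bm x} \in [0,1]^n}\sum_i \Big|\sum_j A_n(i,j)x_j\Big| + \sum_{i,j}|R_n(i,j)| \;=\; O(n) + o(n).$$
The only place where real choice is required is selecting $\varepsilon_n$ so that simultaneously $\varepsilon_n \to 0$ (to kill the sup-norm) and $\delta_n/\varepsilon_n \to 0$ (to kill the $L^1$ error)—any $\varepsilon_n$ with $\delta_n \ll \varepsilon_n \ll 1$ works, and I expect this minor balancing act to be the only substantive decision in the argument; everything else is routine from Cauchy--Schwarz applied to $\tr(A_n^2)$.
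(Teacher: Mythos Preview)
Your proposal is correct and follows essentially the same approach as the paper: zero the diagonal via Cauchy--Schwarz on $\sum_i A_n(i,i)^2 \le \tr(A_n^2)$, then truncate off-diagonal entries above a threshold $\varepsilon_n$ with $\delta_n \ll \varepsilon_n \ll 1$ (the paper takes $\varepsilon_n = \sqrt{\delta_n}$ and bounds the $L^1$ truncation error directly by $\tr(A_n^2)/\varepsilon_n$ via $|a|\mathbf 1_{|a|>\varepsilon_n} \le a^2/\varepsilon_n$, which is the same bound your Chebyshev-plus-Cauchy--Schwarz route reaches). Your explicit verification that $\widetilde A_n$ inherits the regularity condition \eqref{eq:model_assumption2} is a detail the paper omits but which is indeed needed.
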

\begin{proof}
Since $A_n$ satisfies the mean-field assumption, setting $\varepsilon_n:=n^{-1/2}\sqrt{\tr(A_n^2)}$, we see that $\vep_n\rightarrow 0$. Now defining an $n\times n$ symmetric matrix $\widetilde{A}_n$ by 
$$\widetilde{A}_n(i,i):=0, \quad \widetilde{A}_n(i,j):=A_n(i,j)1_{|A_n(i,j)|\le \varepsilon_n},$$
one immediately has $\max_{i,j\in [n]}{|\widetilde{A}_n(i,j)|}\le \varepsilon_n\rightarrow 0$. Extending the definition of ${\bm H}_n^{{\bm J}, {\bm h}}(\cdot)$ to $\cP([q])^n$ (see Definition \ref{defn:defn} below for more details), and defining $\widetilde{{\bm H}}^{{\bm J},{\bm h}}_n$ analogously one has
\begin{align}\label{eq:entry_small}
\sup_{\gq\in \cP([q])^n}\Big|{\bm H}_n^{{\bm J},{\bm h}}(\gq)-\widetilde{{\bm H}}_n^{{\bm J},{\bm h}}(\gq)\Big|\le &\frac{q\norm{{\bm J}}_\infty}{2}\left(\sum_{i,j\in [n]}|A_n(i,j)|1_{|A_n(i,j)|>\varepsilon_n})+\sum_{i \in [n]}|A_n(i,i)|\right)\notag\\
\le &\frac{q\norm{{\bm J}}_\infty}{2\varepsilon_n}\sum_{i,j\in [n]}A_n(i,j)^2+\frac{q\norm{{\bm J}}_\infty}{2}\sqrt{n \sum_{i \in [n]}A_n(i,i)^2}\notag\\
\le & \frac{nq\norm{{\bm J}}_\infty\varepsilon_n}{2} +\frac{q\norm{{\bm J}}_\infty}{2}\sqrt{n \tr(A_n^2)} =o(n),
\end{align}
which immediately implies $\sup_{\gq\in \cP([q])^n}|{\sf M}_n^{{\bm J}, {\bm h}}(\gq)-\widetilde{{\sf M}}_n^{{\bm J}, {\bm h}}(\gq)|=o(n)$. Also we have
\begin{align*}
\left|\Phi_n({\bm J},{\bm h})-\widetilde{\Phi}_n({\bm J},{\bm h})\right|=\left|\log\frac{\sum_{{\bm y}\in [q]^n}e^{{\bm H}_n^{{\bm J},{\bm h}}({\bm y})}}{\sum_{{\bm y}\in [q]^n}e^{\widetilde{{\bm H}}_n^{{\bm J},{\bm h}}({\bm y})}}\right|\le& \sup_{{\bm y}\in [q]^n} |{\bm H}_n^{{\bm J},{\bm h}}({\bm y})-\widetilde{{\bm H}}_n^{{\bm J},{\bm h}}({\bm y})|\\
\le&\sup_{{\gq }\in \cP([q]^n)} |{\bm H}_n^{{\bm J},{\bm h}}(\gq)-\widetilde{{\bm H}}_n^{{\bm J},{\bm h}}({\gq})|,
\end{align*}
where the last inequality follows on noting that for any ${\bm y}\in [q]^n$ setting $\gq_i(r)=\delta(y_i,r)$ one has $\gq\in \cP([q])^n$. Since the RHS above is $o(n)$ by \eqref{eq:entry_small}, the proof of the lemma is complete.
\end{proof}
}

\medskip

\noindent
For the remaining of this section and the next, without loss of generality we will assume that diagonal elements of $A_n$ are $0$, and 
$
\max_{i,j\in [n]}|A_n(i,j)|=o(1). 
$
Next we state three lemmas which are necessary for proving Theorem \ref{thm:main}. First, for ease of writing we introduce a few  notations.
\begin{defn}\label{defn:defn}
For any ${\bm y} \in [q]^n$ define the $nq\times 1$ vector ${\bm x}:={\bm x}({\bm y})\in \cX_n$ by setting $x_{ir}:=\delta(y_i,r)$, where
\[
		\cX_n:=\left\{{\bm z} \in \{0,1\}^{nq}: \sum_{r \in [q]} z_{ir}=1 \text{ for all } i \in [n]\right\}.
		\] 
 Let $\gm:[0,1]^{nq}\mapsto [0,1]^{nq}$ by 
$$\gm_{ir}({\bm z}):= \sum_{s=1}^qJ_{rs}\sum_{j=1}^nA_n(i,j)z_{js}.$$ Note that, since diagonal entries of $A_n$ are zero, $\gm_{ir}({\bm z})$ is free of $\{z_{is},s\in [q]\}$.
Next for every $r\in [q]$, define a map $\mathsf{T}_r:(-\infty,\infty)^q\mapsto (0,1)$ by $$\mathsf{T}_r(m_1,m_2,\cdots,m_q):=\frac{e^{m_r}}{\sum_{s\in [q]}e^{m_s}}.$$  Define another $n q\times 1$ vector $\wh{\bm x}$ by
\[
\hat{ x}_{ir}:=\P_{\mu_{n}}(Y_i=r\mid Y_k=y_k,k\neq i)=\mathsf{T}_r(\gm_{i1}+h_1,\cdots,\gm_{iq}+h_q )=\frac{\exp\left(\gm_{ir}({\bm x})+h_r\right)}{\sum_{s=1}^q \exp\left(\gm_{is}({\bm x})+h_s\right)}, 
\]
and note that $\hat{\bm x} \in \hat{\cX}_n$, where 
			\[
			\hat{\cX}_n:=\left\{{\bm z} \in (0,1)^{nq}: \sum_{r \in [q]} z_{ir}=1 \text{ for all } i \in [n]\right\},
			\]
When ${\bm Y}:=(Y_i)_{i \in [n]} \sim \mu_n$, 
let ${\bm X}, \wh{\bm X}$ denote the corresponding random vectors.
Finally by a slight abuse of notation for any ${\bm z}\in [0,1]^{nq}$ let $H_n^{{\bm J},{\bm h}}({\bm z})$ stand for $F_n({\bm z})+\sum_{i\in [n],r\in [q]}h_rz_{ir}$, where $F_n:[0,1]^{nq}\mapsto \R$ is defined by
\[
F_n({\bm z}):= \f{1}{2}\sum_{r,s\in [q],i,j\in [n]}J_{rs}z_{ir}z_{js}A_n(i,j)=\f{1}{2}\sum_{i\in [n],r\in [q]}\gm_{ir}({\bm z})z_{ir}=\f{1}{2}\sum_{r,s\in [q]}J_{rs}{\bm z}_{r}'A_n{\bm z}_s,
\]
${\bm z}_r:=(z_{ir})_{1\le i\le n}\in\R^n$, {and ${\bm z}_r'$ denotes the transpose of ${\bm z}_r$}. In this notation $H_n^{{\bm J},{\bm h}}({\bm x}({\bm y}))$ is the Hamiltonian of the Potts model at ${\bm y}\in[q]^n$ in  \eqref{eq:potts_general}.

\end{defn}

\medskip

With this notation we have

\begin{lem}\label{lem:lemma1}	
If $A_n$ satisfies the mean-field assumption, then 
\[\E_{\mu_n}\left\{\left[F_n({\bm X})-F_n(\wh{{\bm X}})\right]^2\right\}=o(n^2).
\]
\end{lem}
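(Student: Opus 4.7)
The plan is to exploit both the quadratic structure of $F_n$ and the conditional mean-zero identity $\E_{\mu_n}[U_{ir}\mid \{X_{k,\cdot}: k\ne i\}]=0$, where $U_{ir}:=X_{ir}-\wh X_{ir}$, following the strategy of Chatterjee and Chatterjee--Dembo but without appealing to any pointwise bound on the rows of $A_n$. Polarizing the symmetric quadratic form $F_n({\bm z})=\tfrac12\sum_{i,j,r,s}J_{rs}A_n(i,j)z_{ir}z_{js}$ gives the exact identity
\begin{align*}
F_n({\bm X})-F_n(\wh{{\bm X}})=\sum_{i,r}U_{ir}\gm_{ir}(\wh{{\bm X}})+\tfrac12\sum_{i,j,r,s}J_{rs}A_n(i,j)U_{ir}U_{js}=:L+Q,
\end{align*}
so it suffices to show $\E_{\mu_n}L^2=o(n^2)$ and $\E_{\mu_n}Q^2=o(n^2)$.

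The main manipulation is to replace each $\wh X_{js}$ appearing in $L$ and $Q$ by a proxy $\wh X_{js}^{(-i)}$ that is measurable with respect to $\sigma(X_{k,\cdot}:k\ne i)$, obtained by substituting $X_{i,\cdot}=0$ inside the softmax $\mathsf{T}_s$ that defines $\wh X_{js}$. Lipschitz continuity of $\mathsf{T}_s$ yields $|\wh X_{js}-\wh X_{js}^{(-i)}|\le C|A_n(j,i)|$ with $C=C(q,\|{\bm J}\|_\infty)$, so the total replacement error in $L$ is pointwise at most $C\sum_{i,j}A_n(i,j)^2=C\tr(A_n^2)=o(n)$, contributing $o(n^2)$ in $L^2$. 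Denote the resulting proxies by $L^\star=\sum_{i,r}U_{ir}\tilde\gm_{ir}$ and $Q^\star$; it remains to show $\E(L^\star)^2$ and $\E(Q^\star)^2$ are $o(n^2)$.

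Writing $L^\star=\sum_iG_i$ with $G_i=\sum_rU_{ir}\tilde\gm_{ir}$, each $\tilde\gm_{ir}$ is $\sigma(X_{k,\cdot}:k\ne i)$-measurable and $\E[G_i\mid X_{k,\cdot},k\ne i]=0$. The diagonal contribution is bounded via $|\E[U_{ir}U_{ir'}\mid X_{k,\cdot},k\ne i]|\le 1$ and the pointwise estimate $|\tilde\gm_{ir}|\le C\sum_j|A_n(i,j)|$, giving $\sum_i\E G_i^2\le Cq^2 n\tr(A_n^2)=o(n^2)$ by Cauchy--Schwarz. For the off-diagonal sum $\sum_{i\ne i'}\E[G_iG_{i'}]$, I would further approximate $G_{i'}$ by a version $G_{i'}^{(-i)}\in\sigma(X_{k,\cdot}:k\ne i)$; the leading term then satisfies $\E[G_iG_{i'}^{(-i)}]=0$ by the tower property, leaving only residual error contributions governed by $|A_n(i,i')|$ and $\sum_k|A_n(i,k)A_n(k,i')|$. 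These sums are controlled by combining \eqref{eq:model_assumption2}, the hypothesis $\tr(A_n^2)=o(n)$, and the reduction $\max_{i,j}|A_n(i,j)|=o(1)$ from the start of Section~\ref{sec:Pot}. The quadratic piece $\E(Q^\star)^2$ is handled by the same conditional-orthogonality scheme, with the extra factor of $U$ inside $Q^\star$ only helping.

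The principal technical obstacle is the off-diagonal error bookkeeping: naive pointwise bounds such as $|G_i|\le C\sum_j|A_n(i,j)|$ alone yield an estimate of order $n[\tr(A_n^2)]^{3/2}$, which is not $o(n^2)$ under the bare assumption $\tr(A_n^2)=o(n)$. To close the gap one must use the averaged row-sum control of \eqref{eq:model_assumption2} in tandem with $\max_{i,j}|A_n(i,j)|=o(1)$, rather than appealing to the operator norm of $A_n$ or to uniform row-sum bounds.
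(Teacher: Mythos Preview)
Your polarization identity $F_n({\bm X})-F_n(\wh{\bm X})=L+Q$ is correct, and the conditional mean-zero idea $\E[U_{ir}\mid X_{k,\cdot},k\ne i]=0$ is exactly the mechanism the paper uses. But the proposal is not a proof: you yourself flag the off-diagonal error bookkeeping as ``the principal technical obstacle'' and only assert that it can be closed using \eqref{eq:model_assumption2} and $\max_{i,j}|A_n(i,j)|=o(1)$, without carrying this out; the $Q$ term is dismissed in one sentence. This is a genuine gap, not just missing polish. In your scheme, decoupling $G_{i'}$ from $X_i$ forces you to perturb every $\wh X_{js}^{(-i')}$ inside $\tilde\gm_{i'r}$, producing error terms of the form $\sum_{i,i'}|G_i|\sum_j|A_n(i',j)||A_n(j,i)|$. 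These involve the entrywise absolute-value matrix $|A_n|$, and for matrices with both signs condition \eqref{eq:model_assumption2} does \emph{not} control $\sum_{i,j}|A_n(i,j)|$ or the row sums of $|A_n|$; it only controls $\sum_i\bigl|\sum_jA_n(i,j)x_j\bigr|$ for $x\in[0,1]^n$. The bound $n[\tr(A_n^2)]^{3/2}$ you obtain is sharp for this route unless you can avoid the absolute values, and you have not shown how.

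The paper takes a different and cleaner path that sidesteps this difficulty. Rather than squaring after the polarization, it writes
\[
\E_{\mu_n}[\Delta({\bm X})^2]=\int_0^1\sum_{i,r}\E_{\mu_n}\bigl[(X_{ir}-\wh X_{ir})\,u_{ir}(t,{\bm X})\,\Delta({\bm X})\bigr]\,dt,
\]
with $u_{ir}(t,{\bm x})=F_{ir}(t{\bm x}+(1-t)\wh{\bm x})$, and then decouples \emph{once}: it replaces $u_{ir}(t,{\bm X})\Delta({\bm X})$ by $u_{ir}(t,{\bm X}^{(ir)})\Delta({\bm X}^{(ir)})$, whose expectation against $X_{ir}-\wh X_{ir}$ vanishes. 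The resulting error terms are all expressible through the signed quantities $\gm_{ir}({\bm x})$, $\gm_{ir}(\wh{\bm x})$ and $\gm_{ja}(\wh{{\bm x}^{(ir)}})$, never through $\sum_j|A_n(i,j)|$. Two estimates then suffice: $\sum_i\gm_{ir}({\bm z})^2\le q\|{\bm J}\|_\infty^2\lambda_{\max}(A_n)^2\,n=o(n^2)$ (since $\lambda_{\max}(A_n)^2\le\tr(A_n^2)=o(n)$), and $\sum_i|\gm_{ir}({\bm z})|=O(n)$ uniformly in ${\bm z}\in[0,1]^{nq}$ directly from \eqref{eq:model_assumption2}. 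The latter bound applies even to $\gm_{ja}(\wh{{\bm x}^{(ir)}})$ despite its $i$-dependence, because the $O(n)$ constant is uniform over the cube. Combining these with $\|A_n\|_\infty=o(1)$ gives $o(n^2)$ for each error piece with no double-decoupling and no $nT^{3/2}$ obstruction. If you want to rescue your $L+Q$ route, the fix is to keep $\Delta$ intact as a factor (so that each decoupling step changes only a single coordinate of ${\bm X}$ inside an object bounded via $\gm_{ir}$ and \eqref{eq:model_assumption2}), rather than expanding $L^2$ into a double sum; but at that point you have essentially reconstructed the paper's argument.
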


\begin{lem}\label{lem:lemma2}
If $A_n$ satisfies the mean-field assumption, then 
\beq\label{eq:second_T1}
\E_{\mu_n}\left[\left(\sum_{i\in [n],r\in [q]} (X_{ir} - \hat{X}_{ir}) \gm_{ir}({\bm X})\right)^2\right]=o(n^2),
\eeq
{and,
\beq\label{eq:second_T2}
\E_{\mu_n}\left[\sum_{r\in [q]}\left(\sum_{i\in [n]} (X_{ir} - \hat{X}_{ir})\right)^2\right]=o(n^2).
\eeq}
\end{lem}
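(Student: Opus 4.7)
For the second identity \eqref{eq:second_T2}, my plan is to expand the second moment of $N_r-\hat N_r$, where $N_r:=\sum_iX_{ir}$ and $\hat N_r:=\sum_i\hat X_{ir}$, in the usual variance-covariance decomposition. Writing $\mathcal{F}_i:=\sigma(Y_k:k\neq i)$ so that $\hat X_{ir}=\E_{\mu_n}[X_{ir}\mid \mathcal{F}_i]$, the identity $\E_{\mu_n}[X_{ir}\hat X_{jr}]=\E_{\mu_n}[X_{ir}X_{jr}]$ for $i\neq j$ (obtained by conditioning on $\mathcal{F}_j$) yields
\[\E_{\mu_n}[(N_r-\hat N_r)^2]=\sum_{i}\E_{\mu_n}\bigl[\hat X_{ir}(1-\hat X_{ir})\bigr]+\sum_{i\neq j}\E_{\mu_n}\bigl[(\hat X_{ir}-X_{ir})\hat X_{jr}\bigr].\]
The first sum is at most $n$. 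For the cross-sum I would introduce the $\mathcal{F}_i$-measurable surrogate $\hat X_{jr}^{(-i)}$ obtained by removing the $A_n(j,i)$-contribution in the softmax $\mathsf{T}_r$ that defines $\hat X_{jr}$; Lipschitz continuity of $\mathsf{T}_r$ gives $|\hat X_{jr}-\hat X_{jr}^{(-i)}|\le C|A_n(i,j)|$. Replacing $\hat X_{jr}$ by this surrogate annihilates the main contribution since $\E_{\mu_n}[X_{ir}-\hat X_{ir}\mid\mathcal{F}_i]=0$, and what remains is dominated by $C\sum_{i,j}|A_n(i,j)|\,\E_{\mu_n}|X_{ir}-\hat X_{ir}|$. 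A single Cauchy-Schwarz over pairs $(i,j)$, combined with the mean-field hypothesis $\tr(A_n^2)=o(n)$, bounds this by $o(n^{3/2})$, hence by $o(n^2)$ after summing over $r\in [q]$.

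For the first identity \eqref{eq:second_T1}, I would leverage the fact that $F_n$ is a quadratic form by running an exact Taylor expansion of $F_n$ about $\wh{\bm X}$:
\[F_n({\bm X})-F_n(\wh{\bm X})=\sum_{i,r}(X_{ir}-\hat X_{ir})\gm_{ir}(\wh{\bm X})+\tfrac12\sum_{r,s}J_{rs}({\bm X}_r-\wh{\bm X}_r)'A_n({\bm X}_s-\wh{\bm X}_s).\]
Denote the first term on the right by $\tilde D$ and note that the second equals $\tfrac12(D-\tilde D)$, where $D:=\sum_{i,r}(X_{ir}-\hat X_{ir})\gm_{ir}({\bm X})$ is the statistic appearing in \eqref{eq:second_T1}; the identity then rearranges to $D=2(F_n({\bm X})-F_n(\wh{\bm X}))-\tilde D$. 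Lemma \ref{lem:lemma1} controls the first piece in $L^2$, reducing the task to showing $\E_{\mu_n}[\tilde D^2]=o(n^2)$.

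To bound $\E_{\mu_n}[\tilde D^2]$ I would mimic the argument used for \eqref{eq:second_T2}. The coefficient $w_{ir}:=\gm_{ir}(\wh{\bm X})$ depends on $Y_i$ only through the soft predictions $\hat X_{js}$ with $j\neq i$; replacing each of them by its $\mathcal{F}_i$-measurable surrogate $\hat X_{js}^{(-i)}$ perturbs $w_{ir}$ by at most $C\sum_jA_n(i,j)^2$, and summing over $i,r$ produces a deterministic error of $O(\tr(A_n^2))=o(n)$. The cleaned-up statistic $\tilde D'=\sum_i\tilde f_i$ is then a sum of $\mathcal{F}_i$-conditionally-mean-zero increments, whose diagonal second-moment contribution is at most $\E_{\mu_n}[\sum_{i,r}|w_{ir}^{(-i)}|^2]\le C\,n\,\tr(A_n^2)=o(n^2)$, via the deterministic bound $|w_{ir}^{(-i)}|\le C\sum_j|A_n(i,j)|$ and Cauchy-Schwarz. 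The main obstacle will be controlling the off-diagonal cross-terms $\E_{\mu_n}[\tilde f_i\tilde f_j]$ for $i\neq j$: there I would apply the $\mathcal{F}_i$-surrogate trick a second time to $\tilde f_j$, bound the resulting perturbations by expressions of order $|A_n(i,j)|\sum_k|A_n(j,k)|$ and $\sum_k|A_n(j,k)||A_n(k,i)|$, and then combine the $O(n)$ row-sum control $\sum_i|\sum_jA_n(i,j)\hat X_{js}|=O(n)$ supplied by \eqref{eq:model_assumption2} with the small-trace hypothesis $\tr(A_n^2)=o(n)$ in a carefully arranged Cauchy-Schwarz, instead of applying Cauchy-Schwarz directly against the potentially large coefficients $\gm_{ir}$.
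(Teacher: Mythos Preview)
Your proof of \eqref{eq:second_T2} is correct and essentially identical to the paper's: both exploit $\E_{\mu_n}[X_{ir}-\hat X_{ir}\mid\mathcal F_i]=0$ by replacing the cross-term with an $\mathcal F_i$-measurable surrogate. The paper packages this as the single identity $\E_{\mu_n}[(X_{ir}-\hat X_{ir})\widetilde G_r({\bm X}^{(ir)})]=0$ and then bounds $\widetilde G_r({\bm X})-\widetilde G_r({\bm X}^{(ir)})$, but the content is the same.

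For \eqref{eq:second_T1} you take a genuinely different route: you use the quadratic structure of $F_n$ to write $D=2(F_n({\bm X})-F_n(\wh{\bm X}))-\tilde D$ and invoke Lemma~\ref{lem:lemma1}, reducing to $\E_{\mu_n}[\tilde D^2]=o(n^2)$ with $\tilde D=\sum_{i,r}(X_{ir}-\hat X_{ir})\gm_{ir}(\wh{\bm X})$. The paper instead bounds $\E_{\mu_n}[D^2]$ directly, writing $\E_{\mu_n}[(X_{ir}-\hat X_{ir})\gm_{ir}({\bm X})G({\bm X}^{(ir)})]=0$ and controlling the three pieces of $G({\bm X})-G({\bm X}^{(ir)})$ term by term. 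Your detour through Lemma~\ref{lem:lemma1} is valid but does not shorten the work: bounding $\E_{\mu_n}[\tilde D^2]$ needs the same surrogate-and-perturbation machinery you already sketched, and the paper's version is slightly cleaner because $\gm_{ir}({\bm X})$ is already $\mathcal F_i$-measurable (since $A_n(i,i)=0$), whereas your coefficients $\gm_{ir}(\wh{\bm X})$ depend on $Y_i$ through every $\hat X_{js}$ and so require the extra preliminary cleanup you describe. One caution for your off-diagonal sketch: bounds of the form $|w_{ir}|\le C\sum_j|A_n(i,j)|$ discard the sign structure and cannot in general be summed to $O(n)$ under \eqref{eq:model_assumption2}; the paper avoids this by keeping the inner sums $\sum_j A_n(i,j)\hat x_{js}$ intact (so that $\sum_{i,r}|\gm_{ir}(\hat{\bm x})|=O(n)$ follows directly from \eqref{eq:model_assumption2}) and absorbing factors of $\|A_n\|_\infty=o(1)$ elsewhere. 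With that adjustment your argument goes through.
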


\medskip

Recalling the definition of net (see Definition \ref{defn:net}) we now state our next lemma. 	\begin{lem}\label{lem:mean_field}
If $A_n$ satisfies the mean-field assumption, then given any $\varepsilon>0$, there exists a $\sqrt{n}\varepsilon$-net $\mathcal{D}_n(\varepsilon)$ of the set  $\{A_n{\bm v}:{\bm v}\in [0,1]^n\}$, such that 
\begin{align}\label{eq:chatterjee}
	\lim_{n\rightarrow\infty}\frac{1}{n}\log \left|\mathcal{D}_n(\varepsilon)\right|=0.
	\end{align}

\end{lem}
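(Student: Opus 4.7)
The idea is to construct $\mathcal{D}_n(\varepsilon)$ explicitly via a spectral decomposition of $A_n$ together with a dyadic splitting of the range of its eigenvalues into level sets, in line with the discussion in Section~\ref{sec:proof technique}. Since $A_n$ is symmetric, write $A_n=\sum_{k=1}^n \lambda_k u_k u_k'$ in an orthonormal eigenbasis. For any $\bm v\in [0,1]^n$ set $a_k:=\langle u_k,\bm v\rangle$, so that $\sum_k a_k^2=\|\bm v\|_2^2\le n$ and $A_n\bm v=\sum_k\lambda_k a_k u_k$ with $\|A_n\bm v\|_2^2=\sum_k\lambda_k^2 a_k^2$. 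It therefore suffices to construct a Euclidean net of the coefficient vectors $(\lambda_k a_k)_k$ and lift it to $\R^n$ through $\{u_k\}$.

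Fix $\lambda^\star:=\varepsilon/(2\sqrt 2)$, discard every index $k$ with $|\lambda_k|\le\lambda^\star$ (this contributes at most $\lambda^\star\sqrt n=\sqrt n\varepsilon/(2\sqrt 2)$ to the Euclidean error, since $\sum_{|\lambda_k|\le \lambda^\star}\lambda_k^2 a_k^2 \le (\lambda^\star)^2 \sum_k a_k^2$), and partition the remaining indices into dyadic level sets $I_\ell:=\{k:\lambda^\star 2^\ell\le|\lambda_k|<\lambda^\star 2^{\ell+1}\}$ for $\ell=0,1,2,\ldots$. The mean-field assumption gives the crucial estimate $|I_\ell|(\lambda^\star 2^\ell)^2\le\sum_{k\in I_\ell}\lambda_k^2\le\tr(A_n^2)$, i.e.\ $|I_\ell|\le 4^{-\ell}\tr(A_n^2)/(\lambda^\star)^2$, and since $\max_k|\lambda_k|\le\sqrt{\tr(A_n^2)}$ only $O(\log n)$ levels are nonempty. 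On level $I_\ell$ the restricted vector $(\lambda_k a_k)_{k\in I_\ell}$ lies in a Euclidean ball of radius $R_\ell\le\lambda^\star 2^{\ell+1}\sqrt n$ inside the $|I_\ell|$-dimensional eigen-subspace. I would cover that ball at scale $r_\ell:=(\varepsilon\sqrt n/4)\cdot 2^{-\ell/2}$ by a standard net of cardinality at most $(3R_\ell/r_\ell)^{|I_\ell|}$, and take $\mathcal{D}_n(\varepsilon)$ to be the direct sum (across orthogonal eigen-subspaces) of these level-wise nets, extended by zero on the discarded coordinates.

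Orthogonality of the level eigen-subspaces yields a total approximation error of $\sqrt{(\lambda^\star\sqrt n)^2+\sum_\ell r_\ell^2}\le\sqrt n\varepsilon$ for the chosen constants. For the cardinality, direct substitution gives $R_\ell/r_\ell=O(2^{3\ell/2})$, hence $\log(3R_\ell/r_\ell)=O(\ell+1)$, and therefore
\[
\log|\mathcal{D}_n(\varepsilon)|\le\sum_\ell|I_\ell|\log\!\left(\tfrac{3R_\ell}{r_\ell}\right)\le\frac{C\,\tr(A_n^2)}{\varepsilon^2}\sum_{\ell\ge 0}\frac{\ell+1}{4^\ell}=O\!\left(\tr(A_n^2)/\varepsilon^2\right)=o(n),
\]
using $\tr(A_n^2)=o(n)$ at the last step.

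The main technical obstacle is precisely this balance: a uniform grid scale (same $r_\ell$ at every level) together with a per-coordinate volume bound would only give $\log|\mathcal{D}_n(\varepsilon)|=o(n\log n)$, which is too weak for \eqref{eq:chatterjee}. The geometric decay $r_\ell\propto 2^{-\ell/2}$ paired with the faster geometric decay $|I_\ell|\lesssim 4^{-\ell}$ from the mean-field assumption is exactly what makes $\sum_\ell(\ell+1)4^{-\ell}$ summable to a constant, and thereby delivers the $o(n)$ bound on the log-cardinality of the net.
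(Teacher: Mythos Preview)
Your proof is correct and follows the same overarching strategy as the paper---spectral decomposition of $A_n$, a dyadic partition of the large eigenvalues into level sets, and a product of level-wise Euclidean nets---but your execution is cleaner. The paper nets the \emph{raw} projection coefficients $(\alpha_k)_{k\in I_\ell}$ of $\bm v$ rather than the scaled coefficients $(\lambda_k\alpha_k)_{k\in I_\ell}$; since $\|(\alpha_k)_{k\in I_\ell}\|_2$ can vary arbitrarily across levels (subject only to $\sum_\ell\|(\alpha_k)_{k\in I_\ell}\|_2^2\le n$), the paper must take a further union over dyadic scales $j_\ell$ for each level, impose the constraint $\sum_\ell 2^{2j_\ell}\le 5n$, and then control the resulting product of net sizes with a Jensen/Cauchy--Schwarz argument. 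By contrast, you net $(\lambda_k a_k)_{k\in I_\ell}$ directly, which places each level inside a single ball of radius $R_\ell\le \lambda^\star 2^{\ell+1}\sqrt n$, so no extra union is needed; the summable factor $\sum_\ell(\ell+1)4^{-\ell}$ then delivers $\log|\mathcal D_n(\varepsilon)|=O(\tr(A_n^2)/\varepsilon^2)=o(n)$ in one line. Both routes rest on the same two facts---$\sum_k a_k^2\le n$ and $\sum_k\lambda_k^2=o(n)$---but your choice of what to discretize avoids the bookkeeping layer and the appeal to Jensen's inequality.
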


\medskip

We now complete the proof of Theorem \ref{thm:main} using Lemma \ref{lem:lemma1}, Lemma \ref{lem:lemma2}, and Lemma \ref{lem:mean_field}, deferring the proof of the lemmas to Section \ref{sec:Pol}.

	\begin{proof}[Proof of Theorem \ref{thm:main}]

		For ${\bm z} \in [0,1]^{nq}$, and ${\bm w} \in (0,1)^{nq}$ define
		\[
		g_n({\bm z},{\bm w}):=\sum_{i\in[n]}\sum_{r \in [q]} z_{ir} \log w_{ir},\quad I_n({\bm z}):=g_n({\bm z},{\bm z}).
		\]
		
		Note that 
		\[
		g_n({\bm x}, \wh{\bm x})-I_n(\hat{\bm x})= \sum_{i\in [n],r \in [q]} {(x_{ir}-\hat{x}_{ir})}\log \hat{x}_{ir}=\sum_{i\in [n],r \in [q]} (x_{ir} -\hat{x}_{ir})(\gm_{ir}({\bm x})+h_r) - \sum_{i\in [n],r \in [q]} (x_{ir} -\hat{x}_{ir})\log \sigma_i,
		\]
		where
		\[
		\sigma_i:= \sum_{s\in [q]} \exp(\gm_{is}({\bm x})+h_s).
		\]
		Since for each $i \in [n]$,
		\[
		\sum_{r \in [q]} x_{ir}=\sum_{r \in [q]}\hat{x}_{ir}=1,
		\]
		we have that
		\[
		g_n({\bm x}, \wh{\bm x})-I_n(\hat{\bm x})= \sum_{i\in [n],r \in [q]} (x_{ir} -\hat{x}_{ir})(\gm_{ir}({\bm x})+h_r).
		\]
		Therefore, from Lemma \ref{lem:lemma2} we deduce that 
		\begin{align*}
		\E_{\mu_n}\left[\left(g_n({\bm X},\wh{\bm X})-I_n(\wh{\bm X})\right)^2\right]& \le 2 \E_{\mu_n}\Big[\Big(\sum_{i\in [n],r\in [q]}(X_{ir}-\hat{X}_{ir})\gm_{ir}({\bm X})\Big)^2\Big]\\
		& \qquad \qquad \qquad \qquad +2\norm{\bm h}_\infty^2q\E_{\mu_n}\sum_{r\in [q]}\Big[\Big(\sum_{i\in [n]}X_{ir}-\hat{X}_{ir}\Big)^2\Big] =o(n^2),
		\end{align*}
where we recall ${\norm{\bm h}}_\infty=\max_{r \in [q]}|h_r|$. Similarly, recalling that  ${\bm H}_n^{{\bm J},{\bm h}}({\bm z})=F_n({\bm z})+\sum_{i \in [n],r\in [q]} h_r z_{ir}$, combining Lemma \ref{lem:lemma1}, and Lemma \ref{lem:lemma2}, we get
		\begin{align*}
		\E_{\mu_n}\left[\left({\bm H}_n^{{\bm J},{\bm h}}({\bm X})- {\bm H}_n^{{\bm J},{\bm h}}(\wh{\bm X})\right)^2\right] & \le 2\E_{\mu_n}\left[\left(F_n({\bm X})-F_n(\wh{\bm X})\right)^2\right]\\
		&  \qquad \qquad \qquad +2\norm{{\bm h}}_\infty^2q\E_{\mu_n}\left[\sum_{r\in [q]}\left(\sum_{i\in [n]}X_{ir}-\wh{ X}_{ir}\right)^2\right]= o(n^2).
		\end{align*}
		Hence, applying Markov's inequality we see that $\P_{\mu_n}({\bm X}\in \cA_n) \ge 1/2$, where  
		\[
		\cA_n:= \left\{{\bm x} \in \cX_n: |{\bm H}_n({\bm x})- {\bm H}_n(\wh{\bm x})| , |g_n({\bm x}, \wh{\bm x})-I_n(\hat{\bm x})|\le \delta_n/2 \right\},
		\]
		for some $\delta_n=o(n)$, and
		
		This implies that
		\begin{align}\label{eq:phi_simplify1}
			\Phi_n({\bm J}, {\bm h}) & \le\log 2+ \log \left(\sum_{{\bm x} \in \cA_n} \exp({\bm H}_n^{{\bm J},{\bm h}}({\bm x}))\right) \notag\\
			& \le  \log 2+ \delta_n + \log \left(\sum_{{\bm x} \in \cA_n} \exp\left[{\bm H}_n^{{\bm J},{\bm h}}(\hat{\bm x})- I_n(\hat{\bm x})+ g_n({\bm x}, \hat{\bm x})\right]\right).
		\end{align}
		Since $\delta_n=o(n)$, it is enough to upper bound the rightmost term in the RHS of \eqref{eq:phi_simplify1}. This will be done by approximating the summation over $\cA_n$, by a summation over a suitable net of $\cA_n$.  
		
		To this end, using Lemma \ref{lem:mean_field} we obtain an $\sqrt{n} \vep$-net $\cD_n(\vep)$ having a sub-exponential size, of the set $\{A_n {\bm v}, {\bm v} \in [0,1]^n\}$. 
		For any $\overline{{\bm v}}:=({\bm v}_1, {\bm v}_2, \ldots, {\bm v}_q)$ such that ${\bm v}_r\in \cD_n(\vep)$ for each $r\in [q]$, choose (if exists) a $\mathsf{v}(\overline{{\bm v}}) \in \cA_n\subset \cX_n \subset \{0,1\}^{nq}$ such that  
		$\norm{A_n\mathsf{v}_r(\overline{\bm v})-\overline{\bm v}_r}_2 \le \sqrt{n} \vep$ for all $r \in [q]$. 
Here $\mathsf{v}_r(\overline{\bm v}):=(\mathsf{v}_{ir}(\overline{\bm v}))_{i \in [n]}$.  Also for any $\mathsf{v}(\overline{\bm v})$ define $$\cD(\mathsf{v}(\overline{\bm v})):= \{{\bm x} \in \cA_n: \norm{A_n{\bm x}_r- A_n\mathsf{v}_r(\overline{\bm v})}_2 \le 2 \sqrt{n} \vep, r\in [q]\}.$$ By triangle inequality it is easy to see that $$\cA_n \subset \bigcup_{{\bm v}_r\in \cD_n(\vep),r\in [q]}  \cD(\mathsf{v}(\overline{\bm v}))=\bigcup_{\overline{\bm v}\in \cD_n^q(\vep)}\cD(\mathsf{v}(\overline{\bm v})),$$ and so
		
		\beq\label{eq:phi_simplify2}
		\sum_{{\bm x} \in \cA_n} \exp\left[{\bm H}_n^{{\bm J}, {\bm h}}(\hat{\bm x})- I_n(\hat{\bm x})+ g_n({\bm x}, \hat{\bm x})\right] \le\sum_{\overline{\bm v} \in \cD_n^q(\vep)}\sum_{{\bm x} \in \cD(\mathsf{v}(\overline{\bm v}))} \exp\left[{\bm H}_n^{{\bm J}, {\bm h}}(\hat{\bm x})- I_n(\hat{\bm x})+ g_n({\bm x}, \hat{\bm x})\right].
		\eeq
		We then claim that for any ${\bm x} \in \cD(\mathsf{v}(\overline{\bm v}))$,
		{\begin{align}\label{eq:phi_simplify3}
				&|{\bm H}_n^{{\bm J}, {\bm h}}(\hat{\bm x})- {\bm H}_n^{{\bm J}, {\bm h}}(\wh{\mathsf{v}(\overline{\bm v}}))|+|g_n({\bm x}, \hat{\bm x}) - g_n({\bm x}, \wh{\mathsf{v}(\overline{\bm v}}))|+ |I_n(\hat{\bm x})- I_n(\wh{\mathsf{v}(\overline{\bm v}}))|\notag\\
				&  \qquad \qquad \qquad \qquad \qquad \qquad \le
				(q\norm{\bm h}_\infty+1)\delta_n + 2 q^2\norm{\bm J}_\infty (\norm{\bm h}_\infty+1) n  \vep+4q^3\norm{\bm J}_\infty n\vep. 
			\end{align}} 
			Since $\delta_n=o(n)$ the RHS of \eqref{eq:phi_simplify3} is bounded by $C(q)n\vep$ for some finite constant $C(q)$, for all large $n$. Thus  using \eqref{eq:phi_simplify1}-\eqref{eq:phi_simplify3} and noting the fact that
			\[
			\sum_{{\bm x} \in \cX_n} e^{g_n({\bm x}, {\bm z})}=1,
			\]
			for any ${\bm z}\in \hat{\cX}_n$,
			we deduce that
			
			\begin{align}
				\Phi_n({\bm J}, {\bm h}) &\le \log 2+ C(q)n\vep+ \log \left(\sum_{\overline{\bm v} \in \cD_n^q(\vep)}\sum_{{\bm x} \in \cD(\mathsf{v}(\overline{\bm v}))}\exp\left[{\bm H}_n^{{\bm J}, {\bm h}}(\widehat{\mathsf{v}(\overline{\bm v})})- I_n(\widehat{\mathsf{v}(\overline{\bm v})})+ g_n({\bm x}, \widehat{\mathsf{v}(\overline{\bm v})})\right]\right)\notag\\
				& \le \log 2+ C(q)n\vep +\log \left(\sum_{\mathsf{v}(\overline{\bm v}) \in \cD_n^{q}(\vep)} \exp\left[{\bm H}_n(\widehat{\mathsf{v}(\overline{\bm v})})- I_n(\widehat{\mathsf{v}(\overline{\bm v})})\right]\right)\notag\\
				& \le \log 2+ C(q)n\vep+q \log |\cD_n(\vep)| + \sup_{\gq \in \cP([q])^n} {\mathsf M}_n^{{\bm J},{\bm h}}(\gq),
			\end{align}
			where the last inequality uses the fact that for any ${\bm x}\in \cX_n$ setting $\gq_i(r)=\hat{x}_{ir}$ one has $\gq_i\in \cP([q])$, for each $i\in [n]$.
			Thus using the fact that $\log |\cD_n(\vep)|=o(n)$ we have
			\[
			\limsup_{n \ra \infty} \f{1}{n}\left[\Phi_n({\bm J}, {\bm h})- \sup_{\gq \in \cP([q])^n} {\mathsf M}_n^{{\bm J},{\bm h}}(\gq)\right] \le C(q)  \vep.
			\]
			Since $\vep>0$ is arbitrary, letting $\vep \to 0$ the proof completes. Therefore it only remains to verify \eqref{eq:phi_simplify3}. 
			
			Turning to prove \eqref{eq:phi_simplify3}, we recall that for every $\mathsf{v}(\overline{\bm v}) \in \cD_n^{(q)}(\vep)$, and any ${\bm x} \in \cD(\mathsf{v}(\overline{\bm v}))$, both ${\bm x}$ and $\mathsf{v}(\overline{\bm v})$ are in the set $\cA_n$. Therefore
			\[
			|{\bm H}_n^{{\bm J}, {\bm h}}({\bm x})- {\bm H}_n^{{\bm J}, {\bm h}}(\hat{\bm x})| \le \delta_n/2, \qquad \text{ and } \qquad |{\bm H}_n^{{\bm J}, {\bm h}}({\mathsf{v}(\overline{\bm v})})- {\bm H}_n^{{\bm J}, {\bm h}}(\widehat{\mathsf{v}(\overline{\bm v})})| \le \delta_n/2.
			\]
			Thus, in order to bound $|{\bm H}_n^{{\bm J}, {\bm h}}(\hat{\bm x})- {\bm H}_n^{{\bm J}, {\bm h}}(\widehat{\mathsf{v}(\overline{\bm v})})|$, we only need to consider $|{\bm H}_n^{{\bm J}, {\bm h}}({\bm x})- {\bm H}_n^{{\bm J}, {\bm h}}(\mathsf{v}(\overline{\bm v}))|$. Now recall that 
			\[{\bm H}_n^{{\bm J}, {\bm h}} ({\bm x}) = \f{1}{2}\sum_{r,s \in [q]}J_{rs}{\bm x}_r'A_n {\bm x}_s + \sum_{r \in [q]} h_r {\bm 1}' {\bm x}_r.\] 
Note that ${\bm x} \in \cD(\mathsf{v}(\overline{\bm v}))$  we have
			\begin{align}
				\left|{\bm x}_r'A_n {\bm x}_s- \mathsf{v}_r(\overline{\bm v})'A_n \mathsf{v}_s(\overline{\bm v})\right|& \le \left|{\bm x}_r'A_n {\bm x}_s- {\bm x}_r'A_n \mathsf{v}_s(\overline{\bm v})\right| + \left|{\bm x}_r'A_n \mathsf{v}_s(\overline{\bm v}) - \mathsf{v}_r(\overline{\bm v})'A_n \mathsf{v}_s(\overline{\bm v})\right| \notag\\
				& \le \sqrt{n} \norm{A_n{\bm x}_r - A_n\mathsf{v}_r(\overline{\bm v})}_2+ \sqrt{n} \norm{A_n{\bm x}_s - A_n\mathsf{v}_s(\overline{\bm v})}_2 \le 4 n \vep.
			\end{align}
			Next we proceed to bound $|{\bm 1}' {\bm x}_r - {\bm 1}'\mathsf{v}_r(\overline{\bm v})|$. From Lemma \ref{lem:lemma2}, applying Markov's inequality it follows that $|{\bm 1}'{\bm x}_r- {\bm 1}'\hat{\bm x}_r| \le \delta_n/2$, for every ${\bm x} \in \cA_n$, and $r \in [q]$. Hence it remains to find an upper bound on $\norm{\hat{\bm x}_r- \widehat{\mathsf{v}_r(\overline{\bm v})}}_2$. To this end, recalling that $\hat{x}_{ir}=\mathsf{T}_r(\gm_{i1}({\bm x})+h_1,\cdots,\gm_{iq}({\bm x})+h_q)$ and noting that
			\beq\label{eq:derivative}
			\norm{\frac{\partial {\sf T}_r(m_1,\cdots,m_q)}{\partial m_s}}_\infty=\norm{{\sf T}_r(m_1,\cdots,m_q)\{\delta(r,s)- {\sf T}_s(m_1,\cdots,m_q)\}}_\infty\le 1,
			\eeq			
applying a multivariate version of the mean-value theorem we obtain that
			\begin{align*}
				|\hat{x}_{ir}-\wh{\mathsf{v}(\overline{\bm v})}_{ir}| \le \sum_{s \in [q]}|\gm_{is}({\bm x}) - \gm_{is}({\mathsf{v}(\overline{\bm v})})| & = \sum_{s \in [q]}\left| \sum_{s' \in [q]} J_{ss'}\left\{ (A_n {\bm x}_{s'})_i - (A_n \mathsf{v}_{s'}(\overline{\bm v}))_i\right\}\right|\\
				& \le q \norm{\bm J}_\infty \sum_{s' \in [q]}\left| (A_n {\bm x}_{s'})_i - (A_n \mathsf{v}_{s'}(\overline{\bm v}))_i\right|.
			\end{align*}This further implies that
			\[
			\norm{\hat{\bm x}_r- \widehat{\mathsf{v}(\overline{\bm v})}}_2^2 \le q^3 \norm{{\bm J}}_\infty^2 \sum_{i \in[n], s'\in[q]} \left| (A_n {\bm x}_{s'})_i - (A_n \mathsf{v}_{s'}(\overline{\bm v}))_i\right|^2\le 4 q^4 \norm{{\bm J}}_\infty^2 n\vep^2.
			\]
			Therefore,
			\begin{align}\label{eq:h_n_bound}
				|{\bm H}_n^{{\bm J}, {\bm h}}(\hat{\bm x})- {\bm H}_n^{{\bm J}, {\bm h}}(\widehat{\mathsf{v}(\overline{\bm v})})| & \le |{\bm H}_n^{{\bm J}, {\bm h}}({\bm x})- {\bm H}_n^{{\bm J}, {\bm h}}(\mathsf{v}(\overline{\bm v}))|+ \delta_n\notag\\
				& \le \delta_n+ \f{\norm{\bm J}_\infty}{2}\sum_{r,s \in [q]}\left|{\bm x}_r'A_n {\bm x}_s- \mathsf{v}_r(\overline{\bm v})A_n \mathsf{v}_s(\overline{\bm v})\right| +  \norm{\bm h}_\infty\sum_{r \in [q]}\left|{\bm 1}'{\bm x}_r- {\bm 1}'\mathsf{v}_r(\overline{\bm v})\right|\notag\\
				& \le (q\norm{\bm h}_\infty+1)\delta_n + 2 q^2\norm{\bm J}_\infty n \vep+ \sqrt{n} \norm{\bm h}_\infty\sum_{r \in [q]}\norm{\hat{\bm x}_r- \wh{\mathsf{v}_r(\overline{\bm v})}}_2 \notag\\
				& \le (q\norm{\bm h}_\infty+1)\delta_n + 2 q^2\norm{\bm J}_\infty (\norm{\bm h}_\infty+1) n  \vep.
			\end{align} 
			Next we proceed to bound $|g_n({\bm x}, \hat{\bm x}) - g_n({\bm x}, \widehat{\mathsf{v}(\overline{\bm v})})|$. To this end, we have
			\begin{align*}
			&|g_n({\bm x},\hat{{\bm x}})-g_n({\bm x},\wh{\mathsf{v}(\overline{\bm v})})|\\
			\le &\sum_{i\in [n],r\in [q]}\Big|\log {\sf T}_r(\gm_{i1}({\bm x})+h_1,\cdots,\gm_{iq}({\bm x})+h_q)-\log {\sf T}_r(\gm_{i1}(\wh{\mathsf{v}(\overline{\bm v})})+h_1,\cdots,\gm_{iq}(\wh{\mathsf{v}(\overline{\bm v})})+h_q)\Big|\\
			\le &\sum_{i\in [n],r,s\in [q]}|\gm_{is}({\bm x})-\gm_{is}({\mathsf{v}(\overline{\bm v})})|\norm{\frac{\partial \log {\sf T}_r}{\partial m_s}}_\infty\\
			\le&q\sum_{i\in [n],s\in [q]}|\gm_{is}({\bm x})-\gm_{is}({\mathsf{v}(\overline{\bm v})})|,
			\end{align*}
			where the last inequality uses \eqref{eq:derivative} to conclude that $\norm{\frac{\partial \log {\sf T}_r}{\partial m_s}}_\infty\le 1$. 

\noindent
This gives
			\begin{align}\label{eq:g_n_bound}
				|g_n({\bm x}, \hat{\bm x}) - g_n({\bm x}, \widehat{\mathsf{v}(\overline{\bm v})})| &\le q\sum_{i\in [n]s\in [q]}|\gm_{is}({\bm x}) - \gm_{is}({\mathsf{v}(\overline{\bm v})})|  \notag\\
&= q \sum_{i\in [n], s \in [q]}\left| \sum_{s' \in [q]} J_{ss'}\left\{ (A_n {\bm x}_{s'})_i - (A_n \mathsf{v}_{s'}(\overline{\bm v}))_i\right\}\right|\notag\\
				& \le  q^2 \norm{\bm J}_\infty \sum_{i \in [n], s' \in [q]}\left| (A_n {\bm x}_{s'})_i - (A_n \mathsf{v}_{s'}(\overline{\bm v}))_i\right|\notag\\
				& \le  q^2\sqrt{n} \norm{\bm J}_\infty\sum_{s' \in [q]}\norm{A_n{\bm x}_{s'}-A_n\mathsf{v}_{s'}(\overline{{\bm v}})}_2 \le 2q^3 \norm{\bm J}_\infty n \vep,
			\end{align}
			where the penultimate step uses Cauchy-Schwarz inequality, and the last step uses the fact that ${\bm x}\in \cD({\mathsf{v}}(\ol{\bm v}))$.

			Now it remains to bound $ |I_n(\hat{\bm x})- I_n(\widehat{\mathsf{v}(\overline{\bm v})})|$, for which we follow a similar program. Setting $\gamma(t):=t\log t$ for $t\ge 0$, we have
\begin{align*}
&|I_n(\hat{\bm x})- I_n(\widehat{\mathsf{v}(\overline{\bm v})})|\\
\le &\sum_{i\in [n],r\in [q]}\Big|\gamma\Big(\mathsf{T}_r(\gm_{i1}({\bm x})+h_1,\cdots,\gm_{iq}({\bm x})+h_q)\Big)-\gamma\Big(\mathsf{T}_r(\gm_{i1}(\mathsf{v}(\overline{\bm v}))+h_1,\cdots,\gm_{iq}(\mathsf{v}(\overline{\bm v}))+h_q)\Big)\Big|\\
\le &\sum_{i\in [n],r,s\in [q]}|\gm_{is}({\bm x})-\gm_{is}(\mathsf{v}(\overline{\bm v}))|\norm{\frac{\partial (\gamma\circ {\sf T}_r)}{\partial m_s}}_\infty
\end{align*}			
 Using \eqref{eq:derivative} gives
\begin{align*}
\norm{\frac{\partial (\gamma\circ \mathsf{T}_r)}{\partial m_s}}_\infty=\norm{\mathsf{T}_r(1+\log \mathsf{T}_r)\{\delta(r,s)-\mathsf{T}_s\}}_\infty\le \sup_{t\in [0,1]}t|1+\log t|\le 1,
\end{align*}			  
			 and therefore 	  

			\beq\label{eq:i_n_bound}
			|I_n(\hat{\bm x})- I_n(\widehat{\mathsf{v}(\overline{\bm v})})| \le q\sum_{i\in [n]s\in [q]}|\gm_{is}({\bm x}) - \gm_{is}(\wh{\mathsf{v}(\overline{\bm v})})| \le 2q^3 \norm{\bm J}_\infty n \vep,
			\eeq
			where the last bound follows by arguments similar to \eqref{eq:g_n_bound}.
			Finally combining \eqref{eq:h_n_bound}-\eqref{eq:i_n_bound} we arrive at \eqref{eq:phi_simplify3}, and this completes the proof.
		\end{proof}

\section{Proof of auxiliary Lemmas}\label{sec:Pol}
{

\noindent
In this section we prove Lemma \ref{lem:lemma1}, Lemma \ref{lem:lemma2}, and Lemma \ref{lem:mean_field}. We start with the proof of Lemma \ref{lem:lemma1}.

\begin{proof}[Proof of {Lemma} \ref{lem:lemma1}]
To lighten the notation, we drop the subscript $n$ in $F_n$, and write $F$ through out the proof. Before we begin the proof let us introduce some notation:
\[
F_{ir}({\bm x}):= \f{\partial}{\partial x_{ir}}F({\bm x}), \quad \text{ and }  \quad F_{ir, js}({\bm x}):= \f{\partial^2}{\partial x_{ir} \partial x_{js}}F({\bm x}).
\]
 Equipped with these notation by mean-value theorem we have
 \[
 F({\bm x})- F(\wh{{\bm x}})= \int_{0}^1\sum_{i\in [n],r\in [q]} (x_{ir}-\hat{x}_{ir}) F_{ir}(t{\bm x}+ (1-t)\wh{{\bm x}}) dt.
 \]
 Thus denoting $\Delta({\bm x}):= F({\bm x})-F(\wh{{\bm x}})$, and $u_{ir}(t,{\bm x}):= F_{ir}(t{\bm x}+ (1-t)\wh{{\bm x}})$, we have
 \beq\label{eq:difference_simplify}
 \E_{\mu_n}\left\{\left[F({\bm X})-F(\wh{{\bm X}})\right]^2\right\} = \int_0^1 \sum_{i\in [n],r\in [q]} \E_{\mu_n}\left((X_{ir}-\hat{X}_{ir}) u_{ir}(t,{\bm X})\Delta({\bm X})\right)dt.
\eeq
Hence to complete the proof it is enough to find upper bound on  the {RHS} of \eqref{eq:difference_simplify} for each value of $t \in [0,1]$. To this end observe that for any $i \in [n],r\in [q]$  we have
\[
\E_{\mu_n}\left((X_{ir} - \wh{X}_{ir}) u_{ir}(t,{\bm X}^{(ir)})\Delta({\bm X}^{(ir)})\right)=0,
\]
where ${\bm X}^{(ir)}$ is obtained by setting $X_{ir}=0$ in the random vector ${\bm X}$.
Therefore for each $i \in [n],r\in [q]$ it suffices to consider the difference
\[
\E_{\mu_n}\left((X_{ir}-\wh{X}_{ir}) u_{ir}(t,{\bm X})\Delta({\bm X})-(X_{ir} - \hat{X}_{ir}) u_{ir}(t,{\bm X}^{(ir)})\Delta({\bm X}^{(ir)}) \right),
\]
and show that
\beq\label{eq:bound4.1}
\sup_{t\in [0,1]}\left|\sum_{i\in [n],r\in [q]}\E_{\mu_n}\left((X_{ir}-\wh{X}_{ir}) (u_{ir}(t,{\bm X})- u_{ir}(t,{\bm X}^{(ir)}))\Delta({\bm X}^{(ir)}) \right)\right|=o(n^2),
\eeq
 and
 \beq\label{eq:bound4.2}
\sup_{t\in [0,1]}\left|\sum_{i\in [n],r\in [q]} \E_{\mu_n}\left((X_{ir} - \hat{X}_{ir}) u_{ir}(t,{\bm X})(\Delta({\bm X}) - \Delta({\bm X}^{(ir)})) \right)\right|=o(n^2).
\eeq
To establish \eqref{eq:bound4.1}, we first note that using \eqref{eq:model_assumption2} there exists $C_1<\infty$ such that
\beq\label{eq:intermediate}
\left|\E_{\mu_n}\left((X_{ir}-\hat{X}_{ir}) (u_{ir}(t,{\bm X})- u_{ir}(t,{\bm X}^{(ir)}))\Delta({\bm X}^{(ir)}) \right)\right| \le  C_1n\E_{\mu_n}\left|u_{ir}(t,{\bm X})- u_{ir}(t,{\bm X}^{(ir)})\right|.
\eeq
Since 
\begin{align*}
u_{ir}(t,X)&={t}\gm_{ir}({\bm x})+({1-t}){\gm}_{ir}(\hat{\bm x})\\
&={t}\gm_{ir}({\bm x})+({1-t})\sum_{j\in [n],s\in [q]}J_{rs}A_n(i,j)\mathsf{T}_s(\gm_{j1}({\bm x})+h_1,\cdots,\gm_{jq}({\bm x})+h_q),
\end{align*} 
and $\gm_{ir}({\bm x})$ is free of $\{x_{is}\}_{s\in [q]}$, by chain rule the RHS of \eqref{eq:intermediate} can be bounded by
\begin{align*}
&C_1n\Big|\sum_{j\in [n],s\in [q]} J_{rs}A_n(i,j)\sum_{s'\in [q]}\Big(\frac{\partial {\sf T}_s}{\partial m_{s'}}(m_1,\cdots,m_q)\Big|_{\{m_{r'}=\gm_{jr'}({\bm x})+h_{r'},r'\in [q]\}}\Big) A_n(i,j)J_{rr'}\Big|\\
 \le & C_1q^2n\|{\bm J}\|_\infty^2 \sum_{j\in [n]} |A_n(i,j)|^2, 
\end{align*}
where the last step uses \eqref{eq:derivative}. This, on
 summing over $i\in [n]$, and $r\in [q]$ gives \eqref{eq:bound4.1} by the mean-field assumption.
 Next turning to bound \eqref{eq:bound4.2},  
we first write
\begin{align}
\notag2(\Delta({\bm X})-\Delta({\bm X}^{(ir)}))=&\sum_{a,b\in [q]}J_{ab}\left[ {\bm X}_a'A_n {\bm X}_b - {\bm X}_a^{(ir)'}A_n {\bm X}_b^{(ir)}\right]-
 \sum_{a,b\in [q]}J_{ab}\left[\wh{\bm X}_a' A_n \wh{\bm X}_b- \wh{\bm X}_a' A_n \Big(\wh{{\bm X}^{(ir)}}\Big)_b\right]\notag\\
& -\sum_{a,b\in [q]}J_{ab}\left[\wh{\bm X}_a' A_n \wh{{\bm X}^{(ir)}}_b- \wh{{\bm X}^{(ir)}}'_a A_n \Big(\wh{{\bm X}^{(ir)}}\Big)_b\right].
 \label{eq:split_delta}
\end{align}
Here the notation $\Big(\wh{{\bm X}^{(ir)}}\Big)_b$ means the $b^{th}$ column of the matrix $\wh{{\bm X}^{(ir)}}$. Now note for any $a \in [q]\setminus\{r\}$, ${\bm x}_a={\bm x}_a^{(ir)}$, 
\[
{\bm x}_a'A_n {\bm x}_r - {\bm x}_a^{(ir)'}A_n {\bm x}_r^{(ir)} = {\bm x}_a'A_n {\bm x}_r - {\bm x}_a A_n {\bm x}_r^{(ir)} = x_{ir} \left(A_n {\bm x}_a\right)_i,
\]
and
\[
{\bm x}_r'A_n {\bm x}_r - {\bm x}_r^{(ir)'}A_n {\bm x}_r^{(ir)}= x_{ir} \left(A_n {\bm x}_r\right)_i + x_{ir} \left(A_n {\bm x}_r^{(ir)}\right)_i= 2 x_{ir} \left(A_n {\bm x}_r\right)_i,
\]
where the last equality follows from the fact that $A_n(i,i)=0$. Thus recalling the definition of $\gm_{ir}({\bm x})$, we have 

\begin{eqnarray}\label{eq:term1_delta_split}
&& \left|\sum_{i\in [n],a,b,r\in [q]}{J_{ab}}\left((X_{ir} - \hat{X}_{ir}) u_{ir}(t,{\bm x})\{{\bm x}_a'A_n {\bm x}_b - {\bm x}_a^{(ir)'}A_n {\bm x}_b^{(ir)}\} \right)\right|\notag\\
 &\le &2\sum_{i\in [n],r\in [q]}  |\gm_{ir}({\bm x})|(|\gm_{ir}({\bm x})|+|{\gm}_{ir}(\hat{\bm x})|)\notag\\
 &\le &2\left\{\sum_{r\in [q],i\in [n]}\gm_{ir}({\bm x})^2+\sum_{r\in [q]}\sqrt{\sum_{i=1}^n\gm_{ir}({\bm x})^2}\sqrt{\sum_{i=1}^n{\gm_{ir}(\hat{\bm x})}^2}\right\},
\end{eqnarray}
where the first step uses the fact that $|u_{ir}(t,{\bm x})| \le t |\gm_{ir}({\bm x})+ (1-t)|\gm_{ir}(\hat{\bm x})$, and last step follows by an application of Cauchy-Schwarz inequality.

\noindent
Also the mean-field assumption implies that $\lambda_{\max}(A_n)= o(\sqrt{n})$, and therefore we have 
$$\sum_{i\in [n]}\gm_{ir}({\bm x})^2\le q\norm{{\bm J}}_\infty^2\sum_{s\in [q]}\norm{A_n{\bm x}_s}_2^2\le q\norm{{\bm J}}_\infty^2\lambda_{\max}^2(A_n)\sum_{s \in [q]}\norm{{\bm x}_s}_2^2=o(n^2).$$
By similar arguments, from \eqref{eq:term1_delta_split} we deduce that 
\beq\label{eq:combine_1}
\sup_{t\in [0,1]}\left|\sum_{i\in [n],r\in [q]}\E_{\mu_n}\left((X_{ir} - \wh{X}_{ir}) u_{ir}(t,{\bm X})\sum_{a,b\in [q]}J_{ab}\left[ {\bm X}_a'A_n {\bm X}_b - {\bm X}_a^{(ir)'}A_n {\bm X}_b^{(ir)}\right] \right)\right| = o(n^2).
\eeq
 Next we consider the second term in the RHS of \eqref{eq:split_delta}, where using  first order Taylor's theorem, upon application of chain rule, followed by \eqref{eq:derivative}, gives
\begin{align*}
\wh{\bm x}_a' A_n \wh{\bm x}_r- \wh{\bm x}_a' A_n \Big(\wh{\bm x^{(ir)}}\Big)_b= &\sum_{j,k\in [n]} \hat{x}_{ja}A_n(j,k)\Big\{\hat{x}_{kr} -\Big(\widehat{x^{(ir)}}\Big)_{kb}\Big\}\\
= & {\sum_{j,k\in [n]}} \hat{x}_{ja}A_n(j,k)A_n(i,k)\xi_{i,k,b,r},
\end{align*}
 for some $\xi_{i,k,b,r}$, such that $|\xi_{i,k,b,r}|\le q \norm{{\bm J}}_\infty$. 
Denoting $\norm{A_n}_\infty:=\sup_{i,j}|A_n(i,j)|$, and summing over $i\in [n],a,b,r\in [q]$ this gives

\begin{eqnarray*}
&& \left|\sum_{a,b,r\in [q],i\in [n]}J_{ab} \left((x_{ir} - \hat{x}_{ir}) u_{ir}(t,{\bm x})\Big\{\wh{\bm x}'_aA_n \hat{\bm x}_b - \hat{\bm x}'_aA_n \Big(\wh{{\bm x}^{(ir)}}\Big)_b\Big\} \right)\right|\notag\\
 &=&\left|\sum_{i,k\in [n],b,r\in [q]} \left((x_{ir} - \hat{x}_{ir})x_{ir} {\gm}_{kb}(\hat{\bm x}) A_n(i,k)\xi_{i,k,b,r} u_{ir}(t,{\bm x}) \right)\right|\notag\\
 & \le&q\norm{A_n}_\infty \norm{J}_\infty\sum_{i,k\in [n],b,r\in [q]}(|\gm_{ir}({\bm x})|+|{\gm}_{ir}(\hat{\bm x})|)|{\gm}_{kb}(\hat{\bm x})|\\
 &=&q\norm{A_n}_\infty \norm{J}_\infty\bigg[\left(\sum_{i\in [n],r\in [q]}|\gm_{ir}({\bm x})|\right)\left(\sum_{k\in [n],b\in [q]}|{\gm}_{kb}(\hat{\bm x})|\right)\\
 & & \qquad \qquad \qquad \qquad \qquad +\left(\sum_{i\in [n],r\in [q]}|{\gm}_{ir}(\hat{\bm x})|\right)\left(\sum_{k\in [n],b\in [q]}|{\gm}_{kb}(\hat{\bm x})|\right)\bigg].
\end{eqnarray*}
Now using \eqref{eq:model_assumption2} we obtain
\[
\sum_{i\in [n],r\in [q]}|\gm_{ir}({\bm x})|,\sum_{i\in [n],r\in [q]}|{\gm}_{ir}(\hat{\bm x})|= O(n).
\]
This together with the fact that $\norm{A_n}_\infty=o(1)$, implies that the RHS above is $o(n^2)$, thus giving
\beq\label{eq:combine_2}
\sup_{t \in [0,1]}\left|\sum_{i\in [n],r\in [q]} \E_{\mu_n}\left((X_{ir} - \hat{X}_{ir}) u_{ir}(t,{\bm X}) \sum_{a,b\in [q]}J_{ab}\left[\wh{\bm X}_a' A_n \wh{\bm X}_b- \wh{\bm X}_a' A_n \Big(\wh{{\bm X}^{(ir)}}\Big)_b\right]\right)\right|= o(n^2).
\eeq
Finally, considering the third term in the RHS of \eqref{eq:split_delta} and using  first order Taylor's theorem again, we also note that
\begin{align*}
\wh{\bm x}_b' A_n \Big(\wh{{\bm x}^{(ir)}}\Big)_a- \Big(\wh{{\bm x}^{(ir)}}\Big)_b' A_n \Big(\wh{{\bm x}^{(ir)}}\Big)_a= &\sum_{j,k\in [n]}\Big(\widehat{x^{(ir)}}\Big)_{kb} (\hat{x}_{ja} -\Big(\widehat{x^{(ir)}})\Big)_{ja}A_n(j,k)\\
= & \sum_{j,k\in [n]}\Big(\widehat{x^{(ir)}}\Big)_{kb} \xi_{i,j,a,r}A_n(j,k)A_n(i,k).
\end{align*}
From this, proceeding similarly as in the proof of\eqref{eq:combine_2} we have
\begin{eqnarray}\label{eq:term3_delta_split}
&&\left| \sum_{i,j,k\in [n],a,b,r\in [q]}J_{ab} \left((x_{ir} - \hat{x}_{ir}) u_i(t,{\bm x})\Big\{\wh{\bm x}'_bA_n \wh{{\bm x}^{(ir)}}_a - \wh{{\bm x}^{(ir)}}'_bA_n \Big(\wh{{\bm x}^{(ir)}}\Big)_a\Big\} \right)\right|\notag\\
&=&\left|\sum_{i,j\in [n],a,r\in [q]} \left((x_{ir} - \wh{x}_{ir})x_{ir} \gm_{ja}(\wh{{\bm x}^{(ir)}}) A_n(i,k)\xi_{i,j,a,r} u_{ir}(t,{\bm x}) \right)\right|\notag\\
 & \le&q\norm{A_n}_\infty \norm{J}_\infty\sum_{i,j\in [n],a,r\in [q]}(|\gm_{ir}({\bm x})|+|{\gm}_{ir}(\hat{\bm x})|)(|\wh{\gm}_{ja}(\wh{{\bm x}^{(ir)}})|)=o(n^2)
\end{eqnarray}
as before, and so
\beq\label{eq:combine_3}
 \left|\sum_{i\in [n],r\in [q]} \E_{\mu_n}\left((X_{ir} - \hat{X}^{(ir)}) u_{ir}(t,{\bm X})\sum_{a,b\in [q]}J_{ab}\left[\wh{\bm X}_a' A_n \Big(\wh{{\bm X}^{(ir)}}\Big)_b- \Big(\wh{{\bm X}^{(ir)}}\Big)'_a A_n\Big( \wh{{\bm X}^{(ir)}}\Big)_b\right]\right)\right|= o(n^2).
\eeq
Finally combining \eqref{eq:combine_1}, \eqref{eq:combine_2}, and \eqref{eq:combine_3}, the proof is complete.
\end{proof}

\medskip

\noindent
Now we prove Lemma \ref{lem:lemma2}.

\begin{proof}[Proof of {Lemma} \ref{lem:lemma2}]
First we prove \eqref{eq:second_T1}. To this end, for any ${\bm x}\in \cX_n$ define
\[
G({\bm x}):= \sum_{i\in [n],r\in [q]} (x_{ir} - \hat{x}_{ir}) \gm_{ir}({\bm x})
\]
and note that 
\[
\E_{\mu_n}\left ((X_{ir} -\hat{X}_{ir}) \gm_{ir}({\bm X}) G({\bm X}^{(ir)})\right)=0.
\]
Thus we need to show that
\[
\sum_{i\in [n],r\in [q]} \E_{\mu_n}\left[ (X_{ir}- \hat{X}_{ir})\gm_{i,r}({\bm X})(G({\bm X})- G({\bm X}^{(ir)}))\right] =o(n^2).
\]
To this end, we first observe that
\beq\label{eq:G_diff}
G({\bm x})- G({\bm x}^{(ir)}) = 2x_{ir} \gm_{ir}({\bm x})+\sum_{j\in [n],s\in [q]} \hat{x}_{js}\left(\gm_{js}({\bm x}^{(ir)})- \gm_{js}({\bm x})\right)+ \sum_{j \in [n],s\in [q]}(\widehat{x^{(ir)}}_{js}-\hat{x}_{js}) \gm_{js}({\bm x}^{(ir)}) .
\eeq
For the first term in the RHS of \eqref{eq:G_diff}, proceeding as in \eqref{eq:combine_1}, by a Cauchy Schwarz argument we have
$$\left|\sum_{i\in [n],r\in [q]} (x_{ir} -\hat{x}_{ir})x_{ir}\gm_{ir}^2\right|\le \sum_{i\in [n],r\in [q]}\gm_{ir}({\bm x})^2=o(n^2),$$
giving
\beq\label{eq:secondT_1a}
\left|\E_{\mu_n}\sum_{i\in [n],r\in [q]} \left[(X_{ir} -\hat{X}_{ir})X_{ir}\gm_{ir}({\bm X})^2\right]\right|=o(n^2).
\eeq
For controlling the second term in the RHS of \eqref{eq:G_diff} first note that $\gm_{js}({\bm x}) - \gm_{js}({\bm x}^{(ir)}) = A_n(i,j)J_{rs}x_{ir}$. Thus proceeding as in \eqref{eq:term1_delta_split} again we further have
\begin{align*}
&\left|\sum_{i,j\in [n],r,s\in [q]} (x_{ir} -\hat{x}_{ir})\gm_{ir}({\bm x})\hat{x}_{js}(\gm_{js}({\bm x}) - \gm_{js}({\bm x}^{(ir)}))\right|\notag\\
&  =  \left| \sum_{i\in [n],r\in [q]} (x_{ir} - \hat{x}_{ir})x_{ir}\gm_{ir}({\bm x}){\gm}_{ir}(\hat{\bm x})\right| \le  \sum_{i\in [n],r\in [q]} |\gm_{ir}({\bm x})| |{\gm}_{ir}(\hat{\bm x})|
\end{align*}
which is $o(n^2)$ by a Cauchy Schwarz argument as in the proof of \eqref{eq:combine_1}. Thus we have
\begin{equation}\label{eq:secondT_1b}
\left|\E_{\mu_n}\sum_{i,j\in [n],r,s\in [q]} (X_{ir} -\wh{X}_{ir})\gm_{i,r}({\bm X})\wh{X}_{js}(\gm_{js}({\bm X}) - \gm_{js}({\bm X}^{(ir)}))\right|=o(n^2).
\end{equation}
Finally for controlling the third term in the RHS of \eqref{eq:G_diff}, applying first order Taylor's theorem yields that $\hat{x}_{js} - \Big(\widehat{x^{(ir)}}\Big)_{js} = A_n(i,j)\xi_{i,j,r,s}$ with $|\xi_{i,j,r,s}|\le q\norm{{\bm J}}_\infty$. Thus we have 
\begin{align*}
&\left|\sum_{i,j\in [n],r,s\in [q]} (x_{ir} -\hat{x}_{ir})\gm_{ir}({\bm x}) \gm_{js}({\bm x}^{(ir)})\Big\{\hat{x}_{js} -\Big(\widehat{x^{(ir)}}\Big)_{js}\Big\}\right|\notag\\
& \le q\norm{A_n}_\infty\norm{{\bm J}}_\infty \left(\sum_{i\in [n],r\in [q]} |\gm_{ir}({\bm x})|\right)^2+nq^2\norm{A_n}_\infty^2\norm{{\bm J}}_\infty^2\sum_{i\in [n],r\in [q]}|\gm_{ir}({\bm x})|,
\end{align*}
which is $o(n^2)$ by arguments similar to the proof of \eqref{eq:combine_2}. This gives
\begin{align}\label{eq:secondT_1c}
&\left|\E_{\mu_n}\sum_{i,j\in [n],r,s\in [q]} (X_{ir} -\wh{X}_{ir})\gm_{i,r}({\bm X}) \gm_{js}({\bm X}^{(ir)})\Big\{\hat{X}_{js} -\Big(\widehat{X^{(ir)}}\Big)_{js}\Big\}\right|=o(n^2),
\end{align}
which on combining  with\eqref{eq:secondT_1a} and \eqref{eq:secondT_1b} completes the  proof of \eqref{eq:second_T1}.

\medskip

\noindent
Next to {prove \eqref{eq:second_T2}}, we define
\[
\wt{G}_r({\bm x})= \sum_{i\in [n]} (x_{ir} - \hat{x}_{ir}),
\]
and therefore
\[
\wt{G}_r({\bm x})-\wt{G}_r({\bm x}^{(ir)}) = x_{ir} - \sum_{j\in[n]}\Big\{\hat{x}_{jr} -\Big(\widehat{x^{(ir)}}\Big)_{jr}\Big\}. 
\]
Thus observing that 
\[
\E_{\mu_n}\left[(X_{ir} -\hat{X}_{ir}) \wt{G}_r({\bm X}^{(ir)})\right]=0,
\]
for any $i \in [n],r\in [q]$, we only need to show that
\[
\sum_{i\in [n]}\E_{\mu_n}\left[(X_{ir} -\hat{X}_{ir}) (\wt{G}_r({\bm X})-\wt{G}_r({\bm X}^{(ir)}))\right]=o(n^2).
\]
This can be done proceeding similarly as above. We omit the details. 
\end{proof}
\medskip

\noindent
Now we prove Lemma \ref{lem:mean_field} . Before going to the proof let us introduce the following notation:

For $r\in\N$ and $R>0$ let $\cB_r(R)$ denote the Euclidean ball of radius $R$ in dimension $r$, i.e.
		$$\cB_r(R):=\{{\bm v}\in\R^r:\norm{{\bm v}}_2\le R\}.$$
		
\noindent
The proof of Lemma \ref{lem:mean_field} also requires the following standard estimate on an $\eta$-net $\cB_r(R)$. Its proof is based on simple volumetric argument. We refer the reader to \cite[Lemma 2.6]{MS} for its proof. 
	\begin{lem}\label{lem:net}
		For any $R,\eta \in \R$, and  $r\in \N$, there exists an $\eta$-net of $\cB_r(R)$ of size at most $\max\big\{1,(3R/\eta)^r\big\}$.
		\end{lem}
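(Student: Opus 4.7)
The plan is to use the standard packing-to-covering duality together with a volumetric comparison. First, I would dispose of the trivial case $\eta \ge R$: then the singleton $\{\mathbf{0}\}$ already $\eta$-covers $\cB_r(R)$ (since every point of the ball is within distance $R\le \eta$ of the origin), giving a net of size $1$, which matches the $\max\{1,\cdot\}$ in the statement. So henceforth I may assume $\eta<R$, and the target bound becomes $(3R/\eta)^r$.

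Next, I would construct the net as a maximal $\eta$-separated subset $\mathcal{N}\subset \cB_r(R)$, i.e.\ a maximal collection of points in the ball that are pairwise at distance $>\eta$ (existence is guaranteed by Zorn or by a finite greedy procedure once we know $|\mathcal{N}|<\infty$). By maximality, for every $x\in \cB_r(R)$ there is some $y\in\mathcal{N}$ with $\|x-y\|_2\le \eta$, so $\mathcal{N}$ is automatically an $\eta$-net of $\cB_r(R)$ in the sense of Definition \ref{defn:net}. The key step is then to bound $|\mathcal{N}|$.

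For this I would run the classical volumetric packing argument. The open balls $\{y+\cB_r(\eta/2)^\circ\}_{y\in\mathcal{N}}$ are pairwise disjoint (since the centers are more than $\eta$ apart), and each is contained in the enlarged ball $\cB_r(R+\eta/2)$ centered at the origin (since $\|y\|_2\le R$). Comparing $r$-dimensional Lebesgue volumes, and using that $\mathrm{vol}(\cB_r(t))=t^r\mathrm{vol}(\cB_r(1))$, yields
\begin{equation*}
|\mathcal{N}|\,(\eta/2)^r \,\mathrm{vol}(\cB_r(1)) \;\le\; (R+\eta/2)^r\,\mathrm{vol}(\cB_r(1)),
\end{equation*}
so
\begin{equation*}
|\mathcal{N}|\;\le\;\left(\frac{R+\eta/2}{\eta/2}\right)^r\;=\;\left(\frac{2R}{\eta}+1\right)^r.
\end{equation*}
Since we are in the regime $\eta\le R$, we have $2R/\eta+1\le 3R/\eta$, and the desired bound $(3R/\eta)^r$ follows.

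There is really no substantive obstacle here: the only mild subtlety is writing the proof so that both regimes ($\eta\ge R$ and $\eta<R$) are handled in one sweep to match the $\max\{1,(3R/\eta)^r\}$ form, and checking the inequality $2R/\eta+1\le 3R/\eta$ that gives the clean constant $3$. Everything else is standard metric geometry and Lebesgue volume scaling in $\R^r$.
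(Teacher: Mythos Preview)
Your argument is correct and is precisely the standard volumetric packing argument that the paper invokes by citing \cite[Lemma 2.6]{MS} in lieu of a proof; there is no alternative approach in the paper to compare against. The only cosmetic slip is that your case split uses $\eta\ge R$ versus $\eta<R$ while the final inequality is phrased for $\eta\le R$, but the boundary case $\eta=R$ is covered either way, so nothing is lost.
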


\medskip

\noindent
		\begin{proof}[Proof of Lemma \ref{lem:mean_field}]
		
		Let $\{\lambda_1(A_n),\cdots,\lambda_n(A_n)\}$ denote the eigenvalues of $A_n$. Fixing $\varepsilon\in (0,1)$, let $N_n$ denote the number of eigenvalues of $A_n$ which are greater than $\varepsilon/2$ in absolute value. Since $A_n$ satisfies the mean-field assumption, by Chebyshev's inequality we have
		that \begin{align}\label{eq:mean_field_used_1}
		0 \le \lim_{n\rightarrow\infty}	\frac{N_n}{n}\le \lim_{n\rightarrow\infty}\frac{4}{n\vep^2}\sum_{i\in [n]}\lambda_i(A_n)^2=0.
			\end{align}
				
			Set $\ell=\ell_n:=\lceil {\log_2 \sqrt{n}}\rceil$, and for $1\le k\le \ell$ let $I_k:=\{1\le i\le n:2^{k-1}<|\lambda_i(A_n)|\le 2^k\}$. Thus with $I_0:=\{1\le i\le n:\varepsilon/2<|\lambda_i(A_n)|\le 1\}$ and $I:=\cup_{k=0}^\ell I_k$, and using the fact that $\tr(A_n^2)=O(n)$, we have 
		
		$$\sum_{k=0}^{\ell}|I_k|=|I|=N_n.$$
		 For $0\le k,j\le \ell$, if $I_k\neq \phi$ we let $\mathcal{C}_{k}(j)$ denote an $\varepsilon 2^{-{(k+1)}}\sqrt{|I_k|}$-net of the set $\cB_{|I_k|}(2^{j})$. By Lemma \ref{lem:net} we may and will assume that 
		 \begin{align}\label{eq:bound_1}
		 	|\cC_{k}(j)|\le \max\bigg\{1,\Big(\frac{6}{\varepsilon}\Big)^{|I_k|} \Big(\frac{2^{k+j}}{\sqrt{|I_{k}|}}\Big)^{|I_{k|}}\bigg\}.
		 	\end{align}
		 Setting 
		 $$\cS_{n}(\varepsilon):= \bigcup\limits_{0\le j_0,j_1,\cdots,j_\ell\le \ell:\sum_{k=0}^\ell 2^{2j_k}\le 5n} \{\cC_0(j_0)\times \cC_1(j_1)\times \cdots \times \cC_\ell(j_\ell)\}$$
		 we first claim that 
		\begin{align}\label{eq:sub_expo}
			\lim_{n\rightarrow\infty}\frac{1}{n}\log |\cS_{n}(\varepsilon)|=0.
			\end{align}
 Deferring the proof of \eqref{eq:sub_expo} and setting
	$$\cD_n(\varepsilon):=\Big\{\sum_{i\in I}\lambda_i(A_n)c_i{\bm p}_i:{\bm c}:=(c_i)_{i\in I}\in \cS_{n}(\varepsilon)\Big\},$$
where ${\bm p}_1, {\bm p}_2, \ldots, {\bm p}_n$ are the eigenvectors of $A_n$, we will now show that $\cD_n(\varepsilon)$ is indeed an $\sqrt{n}\vep$-net of $\{A_n{\bm v}: {\bm v} \in [0,1]^n\}$ having a sub-exponential size.  Since \eqref{eq:chatterjee} is immediate from \eqref{eq:sub_expo}, it only remains to show that $\cD_n(\varepsilon)$ is a $\sqrt{n}\varepsilon$-net. 

To this end, fix ${\bm v}\in [0,1]^n$, and expand ${\bm v}$ in the basis $\{{\bm p}_1,{\bm p}_2,\cdots,{\bm p}_n\}$ as
$${\bm v}=\sum_{i=1}^n\alpha_i{\bm p}_i,$$
where $\alpha_1,\alpha_2,\cdots,\alpha_n\in \R$ satisfies

\begin{align}\label{eq:bound_2}
\sum_{i=1}^n\alpha_i^2=\sum_{i=1}^n v_i^2\le n,
\end{align}
For $0\le k\le \ell$ setting $s_k:=\sqrt{\sum_{i\in I_k}\alpha_i^2}$, we will now find a vector ${\bm c}\in \cS_n(\varepsilon)$  such that
$$\norm{A{\bm v}-\sum_{i\in I}\lambda_i(A_n)c_i{\bm p}_i}_2\le \sqrt{n}\varepsilon.$$
If $I_k\neq \phi$ for some $k$, setting $j_k:=\max(0,\lceil {\log_2 s_k}\rceil)$ we note that 
 $(\alpha_i,i\in I_k)\in \cB_{|I_k|}(2^{j_k})$, and so there exists $(c_i,i\in I_k)\in \cC_{k}(j_k)$ such that
\begin{align}\label{eq:bound_3}
	\sum_{i\in I_k}(\alpha_i-c_i)^2\le \frac{|I_k|\varepsilon^2}{2^{2k+2}}.
	\end{align}
By our choice of $j_k$  we have $2^{j_k}\le 2s_k$ if $s_k\ge 1$, and $j_k=0$ if $s_k<1$. This gives
 $$\sum_{k=0}^\ell 2^{2j_k}=\sum_{k:j_k=0}2^{2j_k}+\sum_{k:j_k\ge 1}2^{2j_k}\le \ell+4\sum_{k=0}^\ell s_k^2\le \ell+4 \sum_{i=1}^n\alpha_i^2\le 5n,$$
where the last step uses \eqref{eq:bound_2}. Thus we have shown that
 ${\bm c}=(c_i)_{i \in I}\in \cS_n(\varepsilon)$.  
  Finally, recalling that $|\lambda_i(A_n)| \le 2^k$ for any $i \in I_k$, we note
 \begin{align*}
 	\norm{A{\bm v}-\sum_{i\in I}\lambda_i(A_n)c_i{\bm p}_i}_2^2=&\sum_{k=0}^\ell\sum_{i\in I_k}\lambda_i(A_n)^2(\alpha_i-c_i)^2+\sum_{i\notin I}\lambda_i(A_n)^2\alpha_i^2\\
	\le &\frac{\varepsilon^2}{4}\sum_{k=0}^\ell 2^{2k}\frac{|I_k|}{2^{2k}}+\frac{\varepsilon^2}{4}\sum_{i=1}^n\alpha_i^2\\
	\le &(N_n+n)\frac{\varepsilon^2}{4}\le \frac{n\varepsilon^2}{2},
 	\end{align*}
 where the first two inequalities follow by an use of \eqref{eq:bound_3} and \eqref{eq:bound_2}, respectively.  Thus we have shown that $\cD_n(\varepsilon)$ is indeed an $\sqrt{n}\varepsilon$-net.
 
 Therefore to complete the proof it suffices to show \eqref{eq:sub_expo}. To this effect, fix any $j_0,j_1,j_2,\cdots,j_\ell$ such that 
 $\sum_{k=0}^\ell 2^{2j_k}\le 5n$, and set $\cK=\cK(j_0,j_1,\ldots,j_\ell):=\{0\le k\le \ell:6\times 2^{k+j_k}\ge \varepsilon\sqrt{|I_k|}\}$. Thus we have
 \begin{align}\label{eq:bound_4}
 		\log |\cC_0(j_0)\times \cC_1(j_1)\times \cdots \times \cC_\ell(j_\ell)|\le \sum_{k\in \cK} |I_k|\log \Big(\frac{6}{\varepsilon}\frac{2^{k+j_k}}{\sqrt{|I_k|}}\Big) 
 		\end{align}
Further denote $N_n'=N_n'(j_0,j_1,\ldots,j_\ell):=\sum_{k\in \cK}|I_k|$, and obviously $N_n' \le N_n$. Now using Jensen's inequality, applied for $\log(\cdot)$, we have
 \begin{equation}\label{eq:net-bound-5}
 	\frac{1}{N_n'}\sum_{k\in \cK} |I_k|\log  \Big(\frac{6}{\varepsilon}\frac{2^{k+j_k}}{\sqrt{|I_k|}}\Big)
 	\le \log \Big\{\frac{6}{\varepsilon N_n'}\sum_{k\in \cK} 2^{k+j_k}\sqrt{|I_k|}\Big\}
 	\le \log \left\{\frac{6}{\varepsilon N_n'}\sqrt{\sum_{k=0}^\ell2^{2k}|I_k|}\sqrt{\sum_{k=0}^\ell  2^{2j_k}}\right\},
 	\end{equation}
where the last step follows by Cauchy-Schwarz's inequality. Since for any $k \ge 1$, and any $i \in I_k$, we have $|\lambda_i(A_n)| \ge 2^{k-1}$, we therefore deduce that
$$\sum_{k=0}^\ell 2^{2k}|I_k|\le |I_0| + 4\sum_{k=1}^\ell\sum_{i \in I_k}|\lambda_i(A_n)|^2 \le  N_n+4\sum_{i=1}^n\lambda_i(A_n)^2.$$
Now note that Assumption \ref{assu:mf} in particular implies that $\sum_{i=1}^n\lambda_i(A_n)^2\le Cn$ for some positive constant $C$. Therefore recalling that $\sum_{k=0}^\ell 2^{2j_k}\le 5n$, using \eqref{eq:bound_4}, and \eqref{eq:net-bound-5}, we deduce that 
\begin{align}
\log |\cC_0(j_0)\times \cC_1(j_1)\times \cdots \times \cC_\ell(j_\ell)| &\le N_n'\Big(\log \frac{6}{\varepsilon}\Big) +N_n'\log \frac{n\sqrt{5(4C+1)}}{N_n'}\notag\\
 &\le N_n\Big(\log \frac{6}{\varepsilon}\Big) +N_n\log \frac{n\sqrt{5(4C+1)}}{N_n},
\end{align}
where the last step uses the facts that $\lim_{n \ra \infty}\f{N_n}{n}=0$, and $x \mapsto x \log (1/x)$ is increasing near $0$. Therefore from the definition of the set $\cS_n(\vep)$ it now follows that
\[
|\cS_n(\varepsilon)|\le (1+\ell)^{1+\ell} \exp\left[N_n\Big(\log \frac{6}{\varepsilon}\Big) +N_n\log \frac{n\sqrt{5(4C+1)}}{N_n}\right].
\]
Now using the fact that $\lim_{n \ra \infty}\f{N_n}{n}=0$ again the proof completes. 
 \end{proof}

\begin{remark}
Note that the proof of Lemma \ref{lem:mean_field} goes through as long as the following hold:
	
	\begin{align}\label{eq:conj}\frac{1}{n}\sum_{i=1}^n\delta_{\lambda_i}(A_n)\stackrel{w}{\rightarrow}\delta_0,\quad \limsup\limits_{n\rightarrow\infty}\frac{1}{n}\sum_{i=1}^n\lambda_i(A_n)^2<\infty.
		\end{align}
For example, if $A_n$ is the adjacency matrix of the $n$-star graph $K_{1,n-1}$ then it does not satisfy the mean-field assumption. Indeed, this follows from observing that
$$\frac{1}{n}\sum_{i=1}^n\lambda_i^2(A_n)=\frac{2|E(K_{1,n-1})|}{n}\rightarrow 2.$$
However, all but $2$ of the eigenvalues of the adjacency matrix are zero. Therefore \eqref{eq:conj} holds here, and hence proof of Lemma \ref{lem:mean_field} goes through unchanged in this case. For the $n$-star graph, one can directly check that the mean-field approximation \eqref{eq:var_mean_field_prediction} is tight. 
In light of this and similar other examples, we believe the mean-field assumption can be weakened to \eqref{eq:conj}, and we conjecture that the conclusion of Theorem \ref{thm:main} continues to hold as long as \eqref{eq:conj} holds. 
	\end{remark}

		\section{Proof of Applications}\label{sec:Poa}

		\subsection{Proofs of Theorem \ref{thm:mean_field} and Theorem \ref{thm:ldp}} In this section we compute the limiting log partition function for asymptotically regular graphs. This is followed by the proof of large deviation principle for the empirical measure of the colors for such graphs.

		\begin{proof}[Proof of Theorem \ref{thm:mean_field}]
			(a) Since $A_n$ satisfies the mean-field assumption, applying Theorem \ref{thm:main} we get
			$$\lim_{n\rightarrow\infty}\frac{1}{n}[\Phi_n({\bm J},{\bm h})-\sup_{\gq\in \cP([q])^n}{\sf M}_n^{{\bm J},{\bm h}}(\gq)]=0.$$
			
			Proceeding to estimate ${\sf M}_n^{{\bm J},{\bm h}}(\gq)$, fixing $\delta>0$, we denote
\[
A^{(\delta)}_n(i,j):=A_n(i,j)1_{|\cR_n(i)-1|\le \delta}1_{|\cR_n(j)-1|\le \delta}.
\]			
Thus we have
			\begin{align}\label{eq:regular-1}
				&\frac{1}{n}\sum_{r=1}^q\sum_{i,j=1}^n A_n(i,j)\gq_i(r)\gq_j(r)\notag\\
				\le &\frac{1}{n}\sum_{r=1}^q\sum_{i,j=1}^n A_n^{(\delta)}(i,j)\gq_i(r)\gq_j(r)+\frac{1}{n}\sum_{i:|\cR_n(i)-1|> \delta}\sum_{r=1}^q\sum_{j=1}^n A_n(i,j)\gq_i(r)\gq_j(r)\notag\\
				& \qquad \qquad \qquad \qquad \qquad +\frac{1}{n}\sum_{j:|\cR_n(j)-1|> \delta}\sum_{r=1}^q\sum_{i=1}^n A_n(i,j)\gq_i(r)\gq_j(r)\notag\\
				= & \frac{1}{n}\sum_{r=1}^q\sum_{i,j=1}^n A_n^{(\delta)}(i,j)\gq_i(r)\gq_j(r)+\frac{2}{n}\sum_{i:|\cR_n(i)-1|> \delta}\sum_{r=1}^q\sum_{j=1}^n A_n(i,j)\gq_i(r)\gq_j(r).
			\end{align}
Note that the second term in the RHS of \eqref{eq:regular-1} is bounded above by  
			\begin{align}\label{eq:regular-2}
				\frac{2q}{n}\sum_{i:|\cR_n(i)-1|>\delta}\cR_n(i)
				\le \frac{2q}{n}\Big[\sum_{i=1}^n \cR_n(i)-(1-\delta)\sum_{i=1}^n1_{|\cR_n(i)-1|\le \delta}\Big]=2qa_n^{(\delta)},
			\end{align}	
			where
			\[
			a_n^{(\delta)}:=  \frac{1}{n}\Big[\sum_{i=1}^n \cR_n(i)-(1-\delta)\sum_{i=1}^n1_{|\cR_n(i)-1|\le \delta}\Big].
			\]			
			Considering the first term in the RHS of \eqref{eq:regular-1}, and noting that $A_n^{(\delta)}$ is a symmetric entries with non negative matrix whose row sums are bounded by $1+\delta$, we apply Gershgorin circle theorem to obtain
			\begin{align}
				\frac{1}{n}\sum_{r=1}^q\sum_{i,j=1}^n A^{(\delta)}_n(i,j)\gq_i(r)\gq_j(r)				\le\frac{1+\delta}{n} \sum_{r=1}^q \sum_{i=1}^n\gq_i(r)^2\label{eq:regular-3}.
			\end{align}
Combining \eqref{eq:regular-2}-\eqref{eq:regular-3}, along with the expression for ${\sf M}_n^{{\bm J},{\bm h}}(\gq)$, we get
			
			\begin{align}\label{eq:regular-4}
				\sup_{\gq\in \cP([q])^n}\frac{1}{n}{\sf M}_n^{{\bm J},{\bm h}}(\gq)
				\le &\sup_{\gq\in \cP([q])}\Big\{\frac{\beta}{2}\sum_{r=1}^q\gq(r)^2+\sum_{r=1}^qh_r\gq(r)-\sum_{r=1}^q\gq(r)\log \gq(r)\Big\}+\frac{q\beta }{2}\Big(\delta+2a_n^{(\delta)}\Big).
			\end{align}
			Now note that $a_n^{(\delta)} \ra \delta$ as $n\rightarrow\infty$ by \eqref{eq:upper_mean_field}-\eqref{eq:lower_mean_field}. Thus taking limits as $n\rightarrow\infty$ on both sides of \eqref{eq:regular-4}, we get
			$$\limsup\limits_{n\rightarrow\infty}\sup_{\gq\in \cP([q])^n}\frac{1}{n}{\sf M}_n^{{\bm J},{\bm h}}(\gq)\le \sup_{\gq\in \cP([q])}\Big\{\frac{\beta}{2}\sum_{r=1}^q\gq(r)^2+\sum_{r=1}^qh_r\gq(r)-\sum_{r=1}^q\gq(r)\log \gq(r)\Big\}+2q\beta \delta,$$ from which the upper bound of 
			\eqref{eq:strong_meanfield_conclusion} follows, as $\delta>0$ is arbitrary.
			
			\medskip			
			
			\noindent			
			For the lower bound, taking a supremum over all $\gq=\prod_{i=1}^n\gq_i$ such that $\gq_i$ is same for all $i$, we have
			\begin{align*}	\sup_{\gq\in \cP([q])^n}\frac{1}{n}{\sf M}_n^{{\bm J},{\bm h}}(\gq)\ge 	\sup_{\gq\in \cP([q])}\Big\{\frac{\beta}{2}\sum_{r=1}^q\gq(r)^2\frac{1}{n}\sum_{i=1}^n\cR_n(i)+\sum_{r=1}^qh_r\gq(r)-\sum_{r=1}^q\gq(r)\log \gq(r)\Big\},
			\end{align*}
			which on dividing by $n$, and taking limits using \eqref{eq:lower_mean_field}, gives the lower bound in \eqref{eq:strong_meanfield_conclusion}. 
			This completes the proof of part (a).

			\bigskip
			
			\noindent
(b) To prove part (i), we note that $\cR_n(i)=1$ for all $i\in [n]$, and thus both \eqref{eq:upper_mean_field} and \eqref{eq:lower_mean_field} hold trivially.

			\noindent			
			Turning to prove part (ii), note that $\cR_n(i)=\frac{d_i(\Graph_n)}{np_n}$ with $d_i(\Graph_n)$ denoting the degree of vertex $i\in [n]$. Since the number of edges $|E_n|$ has a $\dBin({n\choose 2},p_n)$ distribution, 
			$$\frac{1}{n}\sum_{i=1}^n\cR_n(i)=\frac{2|E_n|}{n^2p_n}{\rightarrow}1, \text{ in probability}.$$
			This  verifies \eqref{eq:lower_mean_field}. To check \eqref{eq:upper_mean_field}, fixing $\delta >0$, it suffices to check that $\lim_{n\rightarrow\infty}\frac{1}{n}\E N_n^{(\delta)}=0$, where 
			$$N_n^{(\delta)}:=\sum_{i=1}^n1_{|d_i(\Graph_n)-np_n|>np_n\delta}.$$ This follows using Chebyshev's inequality:
			$$\frac{1}{n}\E N_n^{(\delta)}=\f{1}{n}\sum_{i \in [n]}\P(|d_i(\Graph_n)-np_n|>np_n\delta)\le \frac{(n-1)p_n(1-p_n)}{n^2p_n^2\delta^2}\rightarrow0,$$
			as $np_n\rightarrow\infty$.

		\end{proof}
\medskip

\noindent
Now as an application of Theorem \ref{thm:mean_field}, we derive the following large deviation principle. As a byproduct we also get an exponential concentration of the average sample spins in Ising model.		
		
			\begin{proof}[Proof of Theorem \ref{thm:ldp}]
(a) The proof of this theorem is based on Baldi's theorem (cf.~\cite[Theorem 4.5.20]{DZ}). To this end, we first need to compute logarithmic moment generating function. Fixing a vector ${\bm {\bm t}}=(t_1,t_2,\ldots,t_q)\in\R^q$, using Theorem \ref{thm:mean_field}, one has 
					\begin{align*}
						\frac{1}{n}\log \E_{\mu_n} e^{{n}\sum_{r\in [q]} t_r L_n(r)}=&\frac{1}{n}\left[\Phi_n({\bm J},{\bm h}+{\bm t})-\Phi_n({\bm J},{\bm h})\right]\\
						\stackrel{n\rightarrow\infty}{\longrightarrow}&\sup_{\gq\in \cP([q])}\Big[\frac{\beta}{2}\sum_{r=1}^q\gq(r)^2-\sum_{r=1}^q \gq(r)\log \gq(r)+\sum_{r=1}^q (h_r+t_r)\gq(r)\Big]\\-&\sup_{\gq\in \cP([q])}\Big[\frac{\beta}{2}\sum_{r=1}^q\gq(r)^2-\sum_{r=1}^q \gq(r)\log \gq(r)+\sum_{r=1}^q h_r\gq(r)\Big].
					\end{align*} 
Denoting the RHS above by $\Lambda({\bm t})$ we note that
$$\Lambda({\bm t})=\sup_{\mu\in \cP([q])}\Big\{\sum_{r\in [q]}t_r\mu_r-\widetilde{I}_{\beta,{\bm h}}(\mu)\Big\}.$$
Therefore, applying the duality lemma (see \cite[ Lemma 4.5.8]{DZ}),  we have 
$$\widetilde{I}_{\beta,{\bm h}}(\mu)=\sup_{{\bm t}\in \R^q}\Big\{\sum_{r\in [q]}t_r\mu_r-\Lambda({\bm t})\Big\}.$$
Next note that the set $\cP([q])$ being compact, the law of $L_n(\cdot)$ is automatically exponentially tight. Thus using the fact that $\Lambda({\bm t}) < \infty$ for all ${\bm t} \in \R^q$, applying \cite[Theorem 4.5.20(a)]{DZ} we obtain that for any closed set $F\subset \cP([q])$,
\[
\limsup_{n \ra \infty} \f{1}{n}\log \mu_n(L_n \in F)  \le - \inf_{\mu \in F} \wt{I}_{\be, {\bm h}}(\mu).
\] 
To derive the lower bound we use part (b) of \cite[Theorem 4.5.20]{DZ}. To this end, we note that it is enough to prove that any $\mu\in \cP([q])$ is an exposed point of $\widetilde{I}_{\beta,{\bm h}}(.)$, i.e.~for any $\nu\in \cP([q])$ with $\mu\ne \nu$ there exists ${\bm t}\in\R^q$ such that
					$$\sum_{r\in [q]}\Big\{\frac{\beta}{2}\sum_{r=1}^q\mu_r^2-\sum_{r=1}^q \mu_r\log \mu_r+\sum_{r=1}^q (h_r+t_r)\mu_r\Big\}>\sum_{r\in [q]}\Big\{\frac{\beta}{2}\sum_{r=1}^q\nu_r^2-\sum_{r=1}^q \nu_r\log \nu_r+\sum_{r=1}^q (h_r+t_r)\nu_r\Big\}.$$
This follows on noting the existence of $r\in [q]$ such that $\mu_r>\nu_r$, and then choosing $t_r$ large enough for all r such that $\mu_r >\nu_r$, and $t_r=0$ for all $r$ such that $\mu_r \le \nu_r$. 

\noindent
Now to prove \eqref{eq:conc}, we note that the function  $\mu\mapsto I_{\beta,{\bm h}}(\mu)$ is a non constant analytic function on a compact set, an thus the infimum is attained on a finite set $K_{\be, {\bm h}}$. Thus \eqref{eq:conc} follows from the large deviation principle on noting that the set $\{\nu\in \cP([q]):\min_{\mu \in K_{\beta,{\bm h}}}\norm{\nu-\mu}_\infty\ge \delta\}$ is closed.

\smallskip

\noindent
(b) By the last conclusion of part (a) it suffices to minimize the function $I_{\beta,{\bm h}}(\mu)$. 
					To begin introduce the variable $m=\mu_1-\mu_2\in [-1,1]$ and note that 
					\begin{align*}
				I_{\beta,{\bm h}}(\mu)=	-\frac{\be}{4}m^2-\frac{B}{2}m+H(m)-\Big[\frac{\beta}{4}-\frac{h_1+h_2}{2}\Big].
					\end{align*}
					The optimization of this function has been carried out in \cite[Section 1.1.3]{dm}, where it is shown that
optimum is at $m=0$ for $\beta\le 2,B=0$, at $m=\pm m_{\beta/2,0}$ for $\beta>2,B=0$, and at $m=m_{\beta/2,B/2}$ for $\beta>0,B\ne 0$. This, along with the symmetry of the Ising model for $B=0$ completes the proof of part (b).

			\end{proof}

		\subsection{Proofs of Theorem \ref{thm:bipartite} and Theorem  \ref{thm:lp}} In this section we prove the convergence of log partition function for bi-regular bipartite graphs, followed by the same for a sequence of graphs converging in cut metric.
					
		\begin{proof}[Proof of Theorem \ref{thm:bipartite}]
		To begin first note that \eqref{eq:alpha}, along with $a_nc_n=(n-a_n)d_n$ implies $c_n=\Theta(d_n),$ and therefore we deduce that $c_n$ and $d_n$ individually converge to $\infty$, as $n \ra \infty$. This further implies that {$|E_n|=a_nc_n\gg n$}. Thus Corollary \ref{cor:main} is applicable, and it suffices only to consider the asymptotics of 
		$\sup_{\gq\in \cP([q])^n}\mathsf{M}_n^{\beta,B}(\gq).$
		For computing the supremum in this setting, denoting $b_n:=n-a_n$ we have
		\begin{align*}
		\mathsf{M}_n^{\be,B}(\gq)&=\be\sum_{i\in [a_n]j\in [b_n],r\in [2]}\gq^{(1)}_i(r)\gq^{(2)}_j(r)A_n(i,j)-\sum_{i\in [a_n],r\in [2]}\gq^{(1)}_i(r)\log \gq^{(1)}_i(r)\\
		& \qquad \qquad \qquad \qquad \qquad\qquad \qquad \qquad \qquad \qquad -\sum_{j\in [b_n],r\in [2]}\gq^{(2)}_j(r)\log \gq^{(2)}_j(r).
		\end{align*}
		Introducing variables {$\gs^{(1)}_i:=\gq_i^{(1)}(1)-\gq_i^{(1)}(2)$, and $\gs^{(2)}_j:=\gq_j^{(2)}(1)-\gq_j^{(2)}(2)$, and noting that $\sum_{k \in [2]}\gq_i^{(1)}(k)= \sum_{k \in [2]}\gq_j^{(2)}(k) =1 $}, the RHS above becomes
		\begin{align}
		=&\frac{\be}{2}\sum_{i\in [a_n],j\in [b_n]}(1+\gs^{(1)}_i\gs^{(2)}_j)A_n(i,j)+\sum_{i\in [a_n]}H(\gs^{(1)}_i)+\sum_{j\in [b_n]}H( \gs^{(2)}_j)\notag\\
		=&\frac{\be}{2}\sum_{i\in [a_n],j\in [b_n]}\gs^{(1)}_i \gs^{(2)}_jA_n(i,j)+\sum_{i\in [a_n]}H(\gs^{(1)}_i)+\sum_{j\in [b_n]}H( \gs^{(2)}_j)+\frac{\be}{2}\frac{a_nc_n}{c_n+d_n}.\label{eq:new_func}
\end{align}		
Hence, it suffices to maximize \eqref{eq:new_func}
		over  the set $\{\gs_i^{(1)}\in [-1,1] ,i\in [a_n];\gs_j^{(2)}\in [-1,1],j\in [b_n]\}$.
		
		Fixing $n$ first note that the optimum occurs at an interior point where $\gs_i^{(1)}\in (-1,1),\gs_j^{(2)}\in (-1,1)$, for any $i\in [a_n],j\in [b_n]$. This is due to the facts that for any $i\in [a_n]$, we have $${\frac{\partial }{\partial \gs_i^{(1)}}\mathsf{M}_n^{\be,B}\Big|_{\gs_i^{(1)}\rightarrow -1+}=+\infty,\quad \frac{\partial }{\partial \gs_i^{(1)}}\mathsf{M}_n^{\be,B}\Big|_{\gs_i^{(1)}\rightarrow 1-}=-\infty,}$$ and a similar argument holds for $\gs_j^{(2)}$ for $j\in [b_n],$ as well. Thus differentiating with respect to $\gs_i^{(1)}, \gs_j^{(2)}$ and equating to $0$, any  optimum  satisfies the following equations
		\begin{align}\label{eq:m1}
		\gs_i^{(1)}=\tanh\Big(\beta \sum_{j\in [b_n]}A_n(i,j)\gs_j^{(2)}\Big),\\
		\label{eq:m2}
		\gs_j^{(2)}=\tanh\Big(\beta \sum_{i\in [a_n]}A_n(i,j)\gs_i^{(1)}\Big).
		\end{align}
{We now split the proof into four different cases.

\medskip

\noindent
{\bf Case 1:} $\be >0$, and $\be^2 p(1-p) <1$. 		

\noindent
Since $\be >0$, and $H(x)=H(-x)$, without loss of generality we can assume that for any optimum we have $\gs_{i}^{(1)},\gs_{j}^{(2)}\ge 0$. Next combining \eqref{eq:m1}, and \eqref{eq:m2}, for every $i \in [a_n]$ we get 
		\begin{align}\label{eq:m3}
		\gs_i^{(1)}=\tanh\Big(\beta \sum_{j\in [b_n]}A_n(i,j)\tanh\Big(\beta\sum_{k\in [a_n]}A_n(j,k)\gs_k^{(1)}\Big)\Big).
		\end{align}
Letting $\gs_{i_0}^{(1)}:=\arg\max_{i\in [a_n]}\gs_{i}^{(1)}$, \eqref{eq:m3} further yields
		\begin{align}\label{eq:m4}
		\gs_{i_0}^{(1)}=&\tanh\Big(\beta \sum_{j\in [b_n]}A_n(i,j)\tanh\Big(\beta\sum_{k\in [a_n]}A_n(j,k)\gs_k^{(1)}\Big)\Big)\notag\\
		\le &\tanh\Big(\beta \sum_{j\in [b_n]}A_n(i,j)\tanh\Big(\beta\sum_{k\in [a_n]}A_n(j,k)\gs_{i_0}^{(1)}\Big)\Big)\notag\\
		=&\tanh\Big(\beta \frac{c_n}{c_n+d_n}\tanh\Big(\beta\gs_{i_0}^{(1)}\frac{d_n}{c_n+d_n}\Big)\Big)=:\eta_{\beta,\frac{d_n}{c_n+d_n}}(\gs_{i_0}^{(1)}).
		\end{align}		
It is easy to note that 		
$$\norm{\f{d\eta_{\beta,\frac{d_n}{c_n+d_n}}(\gs)}{d \gs}}_\infty=\beta^2\frac{c_nd_n}{(c_n+d_n)^2}\stackrel{n\rightarrow\infty}{\longrightarrow}\beta^2p(1-p)<1.$$
Thus $\gs \mapsto \eta_{\be,\f{d_n}{c_n+d_n}}(\gs)$ is a contraction. This implies that for any $\gs >0$ 
\[
\eta_{\be,\f{d_n}{c_n+d_n}}(\gs)= \left| \eta_{\be,\f{d_n}{c_n+d_n}}(\gs) - \eta_{\be,\f{d_n}{c_n+d_n}}(0)\right| < |\gs - 0| = \gs,
\]
for all large $n$. Using \eqref{eq:m4}, for large $n$, we therefore deduce that $\gs_{i_0}^{(1)}$ must be equal to zero. This further implies that $\gs_i^{(1)}$ must be equal to zero for all $i \in [a_n]$. Similar arguments hold for $\gs_j^{(2)}$, proving $\gs_j^{(2)}=0$ for all $j \in [b_n]$. Plugging in the values of $\gs_i^{(1)}$, and $\gs_j^{2)}$ in the RHS of \eqref{eq:new_func} we have
		$$\sup_{\gq\in \cP([q])^n}\mathsf{M}_n^{\be,0}(\gq)=\frac{\beta}{2} \frac{a_nc_n}{c_n+d_n}+n\log 2,$$ which on dividing by $n$ and taking limits proves {\bf Case 1}.

\medskip

\noindent
{\bf Case 2:} $\be <0$, and $\be^2 p(1-p) <1$.	

\noindent
Note that one can rewrite $\mathsf{M}_n^{\beta,0}(\gq)$ as 
		$$\frac{(-\be)}{2}\sum_{i\in [a_n],j\in [b_n]}\gs^{(1)}_i(-\gs^{(2)}_j)A_n(i,j)+\sum_{i\in [a_n]}H(\gs^{(1)}_i)+\sum_{j\in [b_n]}H( -\gs^{(2)}_j)+\frac{\be}{2}\frac{a_nc_n}{c_n+d_n} .$$
Since $\beta<0$, one can argue that for any optimum we must have $\gs_i^{(1)}$ and $-\gs_j^{(2)}$ non negative for all $i \in [a_n]$, and $j \in [b_n]$.  The rest of the arguments is similar to {\bf Case 1}. We omit the details. 

\medskip

\noindent
{\bf Case 3:} $\be >0$, and $\be^2 p(1-p) >1$.		
	
\noindent	
We begin by noting that 
\[
\f{d\eta_{\beta, \frac{d_n}{c_n+d_n}}}{d \gs}(\gs)= \beta^2\frac{c_nd_n}{(c_n+d_n)^2} \sech^2\left( \tanh\Big(\beta\gs\frac{d_n}{c_n+d_n}\Big)\right)\sech^2\Big(\beta\gs\frac{d_n}{c_n+d_n}\Big),
\]	
which is decreasing in $\gs$, and goes to zero as $\gs \ra \infty$. Further noting that $$\f{d\eta_{\beta, \frac{d_n}{c_n+d_n}}}{d \gs}(\gs)\Big|_{\gs=0}=\beta^2\frac{c_nd_n}{(c_n+d_n)^2}\stackrel{n\rightarrow\infty}{\longrightarrow}\beta^2 p(1-p)>1,$$
we deduce that there is a unique positive root of the equation $\gs=\eta_{\beta,\frac{d_n}{c_n+d_n}}(\gs)$, denoted by  $\gs_{\beta,\frac{d_n}{c_n+d_n}}$, for all $n$ large enough. Also \eqref{eq:m4} implies $$\max_{i\in [a_n]}\gs_i^{(1)}=\gs_{i_0}^{(1)}\le \gs_{\beta,\frac{d_n}{c_n+d_n}}.$$ By a similar argument we also deduce $$\min_{i\in [a_n]}\gs_i^{(1)}\ge  \gs_{\beta,\frac{d_n}{c_n+d_n}},$$
and so $\gs_i^{(1)}=\gs_{\beta,\frac{d_n}{c_n+d_n}}$ for all $i\in [a_n]$. Plugging in this solution in \eqref{eq:m2}  gives $\gs_j^{(2)} =\gs_{\beta,\frac{c_n}{c_n+d_n}}$ for all $j\in [b_n]$. Thus the optimum solution is 
$$\gs_i^{(1)}=\gs_{\beta,\frac{d_n}{c_n+d_n}},\text{ for all }i\in [a_n], \quad \gs_j^{(2)} =\gs_{\beta,\frac{c_n}{c_n+d_n}}\text{ for all }j\in [b_n].$$
Plugging in this optimal solution in the RHS of \eqref{eq:new_func} gives
$$\sup_{\gq\in \cP([q])^n}\mathsf{M}_n^{\beta,0}(\gq)= \frac{\beta a_nc_n}{2(c_n+d_n)}\Big\{ 1+\gs_{\beta,\frac{d_n}{c_n+d_n}}\gs_{\beta,\frac{c_n}{c_n+d_n}})\Big\}+a_n H\left(\gs_{\beta,\frac{d_n}{c_n+d_n}}\right)+b_n H\left(\gs_{\beta,\frac{d_n}{c_n+d_n}}\right), $$
from which part (b) follows on dividing by $n$ and taking limits, on noting that the function $p\mapsto \gs_{\beta,p}$ is continuous.

\medskip

\noindent
{\bf Case 4:} $\be <0$, and $\be^2 p(1-p) >1$.

\noindent
This can be done by combining the arguments of {\bf Case 2}, and {\bf Case 3.} We omit the details.
		
		
\bigskip

\noindent
Note that the above four cases complete the proof, barring the convergence of $\Phi_n(\be,0)$ at $\be=\pm \be_c(p)=\pm \sqrt{p(1-p)}$. To complete the proof first we use the fact that $|\tanh(x)| < |x|$, for any $x \ne 0$, and deduce that $\gs_{\be,p} \ra 0$, as $\be \ra \pm \be_c$. This implies that $$\Phi	(\be,0):=\frac{\beta p(1-p)}{2}+\frac{|\beta| p(1-p)}{2}\gs_{|\beta|,p}\gs_{|\beta|,1-p}+p H(\gs_{|\beta|,p})+(1-p) H(\gs_{|\beta|,1-p})$$ is continuous for all $\be$. 
Since $\{\Phi_n(\cdot,0)\}$ are convex functions, and limit of such functions is also a convex function, using the fact that $\limsup_{n \ra \infty} \f{1}{n}\Phi_n(\be,0) < \infty$ at $\be=\pm \be_c(p)$, the proof completes by a standard analysis argument.}		
		\end{proof}
{		
\begin{remark}
Even though we do not pursue it here, by combining the arguments of Theorem \ref{thm:mean_field} and Theorem \ref{thm:bipartite} one should be able to prove Theorem \ref{thm:bipartite}  for a sequence {\em asymptotically bi-regular bipartite graphs}.  We believe a similar universality result for the limiting log partition function for the $q$ Potts model holds for general $q$-partite graphs as well, though proving it will require an analysis of fixed points in $q$ dimensional equations for $q>2$. 
\end{remark}		
}	

\medskip

Finally we prove Theorem \ref{thm:lp}.		
				{
				\begin{proof}[Proof of {Theorem} \ref{thm:lp}]
			
			By assumption $W_{nA_n}$
			 converges to  $W$ in the cut metric, and therefore by \cite[Proposition C5 and Proposition C15]{sparse1}, we have $\lim_{n\rightarrow\infty}\frac{|E_n|}{n}=\infty.$
			Thus applying Corollary \ref{cor:main}, we note that it suffices to show
			\begin{align}\label{eq:graph_limit_1}
			\lim_{n\rightarrow\infty}\frac{1}{n}\sup_{\gq\in\mathcal(\mathcal{P}[q])^n}{\sf M}_n^{{\bm J},{\bm h}}(\gq)=\sup_{{\bm \rho}\in {{\bf FP}_q}}F^{{\bm J},{\bm h}}(W,{\bm \rho}).
			\end{align}
To this end, setting $\rho_r(x)=\gq_i(r)$ for $(\frac{i-1}{n},\frac{i}{n}]$ for each $1\le r\le q,1\le i\le n$, we note that
\beq\label{eq:connect_M_F}
\frac{1}{n}{\sf M}_n^{{\bm J},{\bm h}}(\gq)=F^{{\bm J},{\bm h}}(W_{nA_n},{\bm \rho}).
\eeq
Since $nW_{A_n}$ converges to $W$ in the cut metric we have
\beq\label{eq:cut_norm_implication}
\sup_{{\bm \rho}\in {\bf FP}_q}|F^{{\bm J}, {\bm h}}(W_{nA_n},{\bm \rho})-F^{{\bm J}, {\bm h}}(W,{\bm \rho})|\rightarrow 0.
\eeq
This implies that 
\begin{align*}
\limsup_{n \ra \infty} \frac{1}{n}\sup_{\gq\in\mathcal(\mathcal{P}[q])^n}{\sf M}_n^{{\bm J},{\bm h}}(\gq) \le \sup_{{\bm \rho}\in {{\bf FP}_q}}F^{{\bm J},{\bm h}}(W,{\bm \rho}).
\end{align*}
Thus to establish \eqref{eq:graph_limit_1}, we need to prove the other side of the inequality. Turning to prove the same, we note that it suffices to show that given any ${\bm \rho}\in {\bf FP}_q$ there exists ${\bm \rho}^{(n)}\in{\bf FP}_q$, with $\rho_r$ being constant on $(\frac{i-1}{n},\frac{i}{n}]$ for $1 \le i \le n, 1 \le r \le q$, such that 
		   \begin{align}\label{eq:graph_limit_2}
		   	\lim_{n\rightarrow\infty}F^{{\bm J}, {\bm h}}(W,{\bm \rho}^{(n)})=F^{{\bm J}, {\bm h}}(W,{\bm \rho}).
		   	\end{align}
Indeed, using \eqref{eq:connect_M_F}, we deduce that, for any ${\bm \rho} \in {{\bf FP}}_q$
\[
F^{{\bm J}, {\bm h}}(W,{\bm \rho}) \le \left|F^{{\bm J}, {\bm h}}(W,{\bm \rho}) - F^{{\bm J}, {\bm h}}(W_{nA_n},{\bm \rho}^{(n)})\right|+ \frac{1}{n}{\sf M}_n^{{\bm J},{\bm h}}(\gq).
\]
Next taking a supremum over $\gq \in \cP([q])^n$, followed by a liminf on the both sides, and using \eqref{eq:cut_norm_implication}, and \eqref{eq:graph_limit_2}, we further obtain that
\[
F^{{\bm J}, {\bm h}}(W,{\bm \rho}) \le \liminf_{n \ra \infty} \frac{1}{n}\sup_{\gq\in \cP([q])^n}{\sf M}_n^{{\bm J},{\bm h}}(\gq).
\]
Next taking another supremum over ${\bm \rho}\in {\bf FP}_q$, we complete the proof of \eqref{eq:graph_limit_1}.

\medskip

\noindent
Now it only remains to establish \eqref{eq:graph_limit_2}. A standard measure theoretic arguments {yields} the existence $\rho^{(n)}\in {\bf FP}_{q}$, with $\rho_r$ being constant on $(\frac{i-1}{n},\frac{i}{n}]$ for $1 \le i \le n, 1 \le r \le q$, such that 
		  \begin{align}\label{eq:est_as}\lim_{n\rightarrow\infty}\max_{r=1}^q|\rho^{(n)}_r(x)-\rho_r(x)|= 0, \text{ Lebesgue almost surely}.
		 \end{align}
Therefore, noting $\norm{W}_1 <\infty$, using dominated convergence theorem, and the fact that the function $x\mapsto x\log x$ is continuous on $[0,1]$ we prove \eqref{eq:graph_limit_2}. This completes the proof of the theorem.  
			\end{proof}
}

\end{document}